\newcommand{\CC}{{\mathbb C}}\newcommand{\Z}{{\mathbb Z}}
\newcommand{\p}[1]{{\mathbb{P}^{#1}}}
\newcommand{\op}[1]{{\mathcal O}_{\mathbb{P}^{#1}}}
\newcommand{\sing}{\operatorname{Sing}}
\newcommand{\supp}{\operatorname{Supp}}
\newcommand{\calb}{{\mathcal B}}
\newcommand{\calc}{{\mathcal C}}
\newcommand{\cald}{{\mathcal D}}
\newcommand{\calf}{{\mathcal F}}
\newcommand{\calh}{{\mathcal H}}
\newcommand{\cali}{{\mathcal I}}
\newcommand{\call}{{\mathcal L}}
\newcommand{\calm}{{\mathcal M}}
\newcommand{\caln}{{\mathcal N}}
\newcommand{\calo}{{\mathcal O}}
\newcommand{\calp}{{\mathcal P}}
\newcommand{\calr}{{\mathcal R}}
\newcommand{\cals}{{\mathcal S}}
\newcommand{\calt}{{\mathcal T}}
\newcommand{\calz}{{\mathcal Z}}
\newcommand{\inhom}{{\mathcal H}{\it om}}
\newcommand{\inext}{{\mathcal E}{\it xt}}
\DeclareMathOperator{\ext}{Ext}
\DeclareMathOperator{\Hom}{Hom}
\DeclareMathOperator{\Aut}{Aut}
\DeclareMathOperator{\coker}{coker}
\DeclareMathOperator{\im}{im}
\DeclareMathOperator{\rk}{{rk}}
\DeclareMathOperator{\lt}{{length}}
\newcommand{\onto}{\twoheadrightarrow}
\newlength{\rrrr}
\newcommand{\intoo}[1]{\:
\xymatrix@1{\ar@{^(->}[r]^{#1}&}\:}
\newcommand{\ontoo}[1]{\:
\xymatrix@1{\ar@{->>}[r]^{#1}&}\:}
\newtheorem{theorem}{Theorem}
\newtheorem{proposition}[theorem]{Proposition}
\newtheorem{lemma}[theorem]{Lemma}
\newtheorem{corollary}[theorem]{Corollary}
\newtheorem{remark}[theorem]{Remark}
\begin{document}

\title{Two infinite series of moduli spaces of rank 2 sheaves on $\mathbb P^3$}

\author{Marcos Jardim}
\address{IMECC - UNICAMP \\
Departamento de Matem\'atica \\ Rua S\'ergio Buarque de Holanda, 651 \\
13083-970 Campinas-SP, Brazil}
\email{jardim@ime.unicamp.br}

\author{Dimitri Markushevich}
\address{Math\'ematiques -- b\^{a}t.~M2\\
Universit\'e Lille 1\\
F-59655 Villeneuve d'Ascq Cedex, France}
\email{markushe@math.univ-lille1.fr}

\author{Alexander S. Tikhomirov}
\address{Department of Mathematics\\
National Research University,  
Higher School of Economics (HSE)\\
7 Vavilova Street\\ 
117312 Moscow, Russia}
\email{astikhomirov@mail.ru}

\begin{abstract}
We describe new components of the Gieseker--Maruyama moduli scheme $\calm(n)$ of semistable rank 2 sheaves $E$ on $\p3$ with $c_1(E)=0$, $c_2(E)=n$ and $c_3(E)=0$ whose generic point corresponds to non locally free sheaves. We show that such components grow in number as $n$ grows, and discuss how they intersect the instanton component. As an application, we prove that $\calm(2)$ is connected, and identify a connected subscheme of $\calm(3)$ consisting of 7 irreducible components.
\end{abstract}

\maketitle

\section{Introduction}

Let $\calm(c_1;c_2;c_3)$ denote the Gieseker--Maruyama moduli scheme of semistable rank 2 sheaves on $\p3$ with the first, second and third Chern classes equal to $c_1$, $c_2$ and $c_3$, respectively. We will be particularly concerned with $\calm(n):=\calm(0;n;0)$. In addition, we also define $\calb(n)$ to be the open subset of $\calm(n)$ consisting of stable locally free sheaves; and let $\calr(c_1;c_2;c_3)$ denote the open subset of $\calm(c_1;c_2;c_3)$ consisting of stable reflexive sheaves.

The study of stable rank 2 locally free sheaves on $\p3$ in the past 40 years has been mostly concentrated on \emph{instanton bundles}, that is, those stable rank 2 locally free sheaves $E$ on $\p3$ satisfying $c_1(E)=0$ and $h^1(E(-2))=0$. Let $\cali(n)$ denote the moduli space of instanton bundles $E$ with $c_2(E)=n$, regarded as an open subset of $\calm(n)$; the basic questions about its geometry have been settled just recently: it is an irreducible \cite{T1,T2}, nonsingular \cite{JV}, affine \cite{CO} variety of dimension $8n-3$. However, $\overline{\cali(n)}$, the closure of $\cali(n)$ within $\calm(n)$, is not the only irreducuble component of $\calm(n)$ for $n\ge2$; in fact, Ein showed in \cite{Ein} that $\calb(n)$ has several irreducible components as soon as $n\ge3$ and that the number of irreducible components of $\calb(n)$ is not bounded as $n$ grows.

In addition, the closure $\overline{\calb(n)}$ of $\calb(n)$ within $\calm(n)$ does not exhaust $\calm(n)$ already for $n\ge2$, as it was observed by Le Potier \cite[Chapter 7]{LeP} and Trautmann \cite{Tr}. In other words, for each $n\ge2$, $\calm(n)$ possesses entire irreducible components whose generic point corresponds to a stable rank 2 torsion free sheaf which is not locally free. Such components are the main focus of the present paper.

To be more precise, let $E$ be a rank 2 torsion free sheaf on $\p3$ with $c_1(E)=0$, $c_2(E)=n$ and $c_3(E)=0$. Clearly, $c_1(E^{\vee\vee})=0$; we denote $m:=c_2(E^{\vee\vee})$ and $l=c_3(E^{\vee\vee})/2$. Setting $Q_E:=E^{\vee\vee}/E$, one has the fundamental sequence 
\begin{equation}\label{ddsqc}
0 \to E \to E^{\vee\vee} \to Q_E \to 0
\end{equation}
from which one can check that $c_2(Q_E)=-(n-m)$ and $c_3(Q_E)=2l$. If $E$ is not locally free, then $Q_E\ne0$ and there are three possibilities:
\begin{itemize}
\item[(i)] $\dim Q_E=0$; in this case, $n=m$ and $E^{\vee\vee}$ is not locally free; we say that $E$ \emph{has 0-dimensional singularities};
\item[(ii)] $Q_E$ has pure dimension 1; in this case, $n>m$ and we say that $E$ \emph{has 1-dimensional singularities};
\item[(iii)] $\dim Q_E=1$, but it contains 0-dimensional subsheaves; in this case, we say that $E$ \emph{has mixed singularities}.
\end{itemize}
Note that in general $\supp(Q_E)\subseteq\sing(E)$, with equality if $E^{\vee\vee}$ is locally free. Remark that $\sing(E)$ may contain 0-dimensional components even when $Q_E$ has pure dimension one.

We present a systematic construction of irreducible components of $\calm(n)$ whose generic point corresponds to stable rank 2 torsion free sheaves with 0- and 1-dimensional singularities, see Theorem \ref{0d comp's} and Theorem \ref{Thm 4.6} below, respectively. Furthermore, we also show that the number of such components also grows as $n$ grows, cf. Theorem \ref{ein 0d}, for the 0-dimensional case, and Theorem \ref{Thm 8.7}, for the 1-dimensional case, below. 

These results raise the questions of whether it is possible to enumerate all of the irreducible components of $\calm(n)$, at least for low values of $n$, and whether $\calm(n)$ is connected. Indeed, it is not difficult to check that $\calm(1)$ is irreducible (see Section \ref{M(2)}), while Le Potier \cite{LeP} and Trautmann \cite{Tr} showed that $\calm(2)$ has precisely three irreducible components, $\overline{\cali(2)}$ plus two additional ones. In Section \ref{M(2)}, we show, in addition, that the generic point of each of the two so-called \emph{Trautmann components} identified by Le Potier corresponds to a sheaf with 0-dimensional singularities, and that $\calm(2)$ is connected. 

Finally, we show in Section \ref{M(3)} that $\calm(3)$ has at least seven irreducible components. In addition, we provide a discussion on how these various components intersect each other, showing that their union forms a connected subscheme of $\calm(3)$. 

\bigskip

\noindent {\bf Acknowledgements.}
MJ is partially supported by the CNPq grant number 303332/2014-0 and the FAPESP grant number 2014/14743-8. 
DM was partially supported by Labex CEMPI  (ANR-11-LABX-0007-01).
Part of this paper was written while AST visited the University of Campinas under the FAPESP grant number 2014/22807-6. AST also acknowledges the support from the Max Planck Institute in Bonn, where some of the main ideas of this work were conceived during the winter of 2014.

\section{Torsion free sheaves with 0-dimensional singularities} \label{section:0d}

Let us begin by fixing some basic facts about torsion free sheaves $E$ with 0-dimensional singularities. Given any coherent sheaf $G$ on $\p3$, one has $\inext^3(E,G)=0$ and $\inext^2(E,G)\simeq \inext^3(Q_E,G)$ due to the reflexivity of $E^{\vee\vee}$. It follows that torsion free sheaves $E$ with 0-dimensional singularities have homological dimension equal to 2; in other words, $E$ admits a resolution of the form
\begin{equation}\label{E resolution}
0 \to L_2 \to L_1 \to L_0 \to E \to 0
\end{equation}
with each $L_k$ ($k=0,1,2$) being a locally free sheaf.

Note that $\inext^1(E,E)$ and $\inext^2(E,E)$ are $0$-dimensional sheaves, while $\inext^3(E,E)$ vanishes. Thus using the spectral sequence of  local-to-global ext's, we obtain: \label{l2g ext}
\begin{itemize}
\item[(i)] $\ext^1(E,E) = H^1(\inhom(E,E)) \oplus \ker d^{01}_2$
\item[(ii)] $\ext^2(E,E) = \ker d^{02}_3 \oplus \coker d^{01}_2$
\item[(iii)] $\ext^3(E,E) = \coker d^{02}_3$
\end{itemize}
where $d^{01}_2$ and $d^{02}_3$ are the spectral sequence maps
\begin{equation}\label{d012} d^{01}_2 : H^0(\inext^1(E,E)) \to H^2(\inhom(E,E)) ~~ {\rm and} \end{equation}
\begin{equation}\label{d023} d^{02}_3 : H^0(\inext^2(E,E)) \to H^3(\inhom(E,E)). \end{equation}
It then follows that 
\begin{equation}\label{chi ext tf 1}
\sum_{j=0}^3 (-1)^j\dim\ext^j(E,E) = \chi(\inhom(E,E)) - h^0(\inext^1(E,E)) + h^0(\inext^2(E,E)).
\end{equation}

\begin{remark} \rm 
Observe that for a reflexive sheaf $F$ (so that $\inext^2(F,F)=0$) the previous expressions for $\ext^j(F,F)$ simplify to
\begin{itemize}
\item[(i')] $\ext^1(F,F) = H^1(\inhom(F,F)) \oplus \ker d^{01}_2$;
\item[(ii')] $\ext^2(F,F) = \coker d^{01}_2$;
\item[(iii')] $\ext^3(F,F) = H^3(\inhom(F,F))$,
\end{itemize}
where $d^{01}_2$ is the spectral sequence map $d^{01}_2 : H^0(\inext^1(F,F)) \to H^2(\inhom(F,F))$.
Note as well that (\ref{chi ext tf 1}) simplifies to 
\begin{equation}\label{chi ext refl}
\sum_{j=0}^3 (-1)^j\dim\ext^j(F,F) = \chi(\inhom(F,F)) - h^0(\inext^1(F,F)) .
\end{equation}  
\end{remark}

\medskip

\begin{lemma}\label{chi ext tf lemma}
If $E$ is a rank 2 torsion free sheaf with 0-dimensional singularities and $c_1(E)=0$, then
$$ \sum_{j=0}^3 (-1)^j\dim\ext^j(E,E) = - 8c_2(E) + 4. $$
\end{lemma}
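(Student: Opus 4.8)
The plan is to read the left-hand side as the Euler pairing $\chi(E,E):=\sum_{j\ge0}(-1)^j\dim\Ext^j(E,E)$ and to compute it directly by Hirzebruch--Riemann--Roch, bypassing the local decomposition (\ref{chi ext tf 1}). First I would observe that $\Ext^j(E,E)=0$ for $j\ge4$ on $\p3$ (e.g.\ by the local-to-global spectral sequence, since $\inhom(E,E)$ lives on a $3$-fold while $\inext^1(E,E)$ and $\inext^2(E,E)$ are $0$-dimensional), so that the finite sum in the statement is exactly $\chi(E,E)$. Because $\p3$ is smooth, $E$ admits a finite locally free resolution such as (\ref{E resolution}), the Chern character $\mathrm{ch}(E)$ is defined, and the Euler form satisfies
\begin{equation*}
\chi(E,E)=\int_{\p3}\mathrm{ch}(E)^{\dual}\cdot\mathrm{ch}(E)\cdot\mathrm{td}(\p3),
\qquad \mathrm{ch}(E)^{\dual}:=\sum_i(-1)^i\mathrm{ch}_i(E).
\end{equation*}
Crucially this identity holds for any coherent sheaf, so the hypothesis on the singularities of $E$ plays no role; only $\rk E=2$ and $c_1(E)=0$ actually enter.

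Next I would write everything in $A^{*}(\p3)=\Z[H]/(H^4)$, with $\int_{\p3}H^3=1$. Setting $c_i:=c_i(E)$ and using $\rk E=2$, $c_1=0$, one has $\mathrm{ch}(E)=2-c_2H^2+\tfrac12 c_3H^3$, hence $\mathrm{ch}(E)^{\dual}=2-c_2H^2-\tfrac12 c_3H^3$. The main thing to notice in the product is that the degree-$1$ and degree-$3$ terms vanish — in particular the two $c_3$-contributions cancel — leaving $\mathrm{ch}(E)^{\dual}\cdot\mathrm{ch}(E)=4-4c_2H^2$ in $A^{*}(\p3)$. This is precisely the reason the final answer depends on $c_2(E)$ alone and is blind to $c_3(E)$.

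Finally I would multiply by $\mathrm{td}(\p3)=1+2H+\tfrac{11}{6}H^2+H^3$ and read off the coefficient of $H^3$: it equals $4\cdot1+(-4c_2)\cdot2=4-8c_2$, giving $\chi(E,E)=-8c_2(E)+4$, as claimed. There is no real obstacle in this approach; the only point deserving a word of justification is that the Ext-form of Hirzebruch--Riemann--Roch applies to the merely torsion-free sheaf $E$, which is legitimate because on the smooth variety $\p3$ the Chern character factors through $K$-theory. One could instead evaluate the three terms on the right-hand side of (\ref{chi ext tf 1}), but computing $\chi(\inhom(E,E))$ for a non-locally-free $E$ is more cumbersome than the direct calculation, so I would not take that route.
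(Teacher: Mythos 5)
Your argument is correct, and it is genuinely different from the one in the paper. The paper never invokes Riemann--Roch for $E$ directly: it starts from the local-to-global decomposition (\ref{chi ext tf 1}), applies $\inhom(\cdot,E)$ and $\inhom(E^{\vee\vee},\cdot)$ to the fundamental sequence (\ref{ddsqc}) to produce the exact sequences (\ref{inhom-e}) and (\ref{inhomf-}), and then, after a fairly long bookkeeping of Euler characteristics of the sheaves $\inext^j(Q_E,E)$ and $\inext^j(E^{\vee\vee},Q_E)$ via the locally free resolutions of $E$ and $E^{\vee\vee}$, reduces the claim to Hartshorne's formula $\sum_j(-1)^j\dim\ext^j(F,F)=-8c_2(F)+4$ for the reflexive sheaf $F=E^{\vee\vee}$, using that $c_2(E)=c_2(E^{\vee\vee})$ when $Q_E$ is $0$-dimensional. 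Your route replaces all of this by the single identity $\chi(E,E)=\int_{\p3}\mathrm{ch}(E)^{\vee}\,\mathrm{ch}(E)\,\mathrm{td}(\p3)$, valid for any coherent sheaf on a smooth projective variety because $\mathrm{ch}$ and the duality involution factor through $K$-theory; your expansion $\mathrm{ch}(E)^{\vee}\mathrm{ch}(E)=4-4c_2H^2$ and the extraction of the $H^3$-coefficient $4-8c_2$ are both correct, as is the vanishing $\ext^j(E,E)=0$ for $j\ge4$. What you gain is brevity and generality: your proof shows the hypothesis on the singularities is irrelevant, which is consistent with the paper proving the identical formula again in Lemma \ref{chi ext tf lemma 2} for the $1$-dimensional case. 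What the paper's longer route buys is that the exact sequences (\ref{inhom-e}) and (\ref{inhomf-}) set up in its proof are reused later (e.g.\ in Lemma \ref{ext lemma}) to compare $H^i(\inhom(E,E))$ with $H^i(\inhom(E^{\vee\vee},E^{\vee\vee}))$; your proof does not supply those, but they are not needed for the statement at hand.
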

\begin{proof}
The strategy is to show that
$$ \chi(\inhom(E,E)) - h^0(\inext^1(E,E)) + h^0(\inext^2(E,E)) = $$
$$ = \chi(\inhom(E^{\vee\vee},E^{\vee\vee})) - h^0(\inext^1(E^{\vee\vee},E^{\vee\vee})). $$
The desired equality will follow from (\ref{chi ext tf 1}), (\ref{chi ext refl}), and
$$ \sum_{j=0}^3 (-1)^j\dim\ext^j(E^{\vee\vee},E^{\vee\vee}) = - 8c_2(E^{\vee\vee}) + 4, $$
see \cite[Prop. 3.4]{H-r}.

Indeed, applying the functor $\inhom(\cdot,E)$ to the fundamental sequence (\ref{ddsqc}) we obtain the isomorphism
$\inext^2(E,E)\simeq\inext^3(Q_E,E)$ plus the exact sequence
\begin{align}
0 \to \inhom(E^{\vee\vee},E) & \to \inhom(E,E) \to \inext^1(Q_E,E) \to \inext^1(E^{\vee\vee},E) \to \label{inhom-e} \\
& \to \inext^1(E,E) \to \inext^2(Q_E,E) \to 0,
\end{align}
since $\inext^2(E^{\vee\vee},E)=0$ because $E^{\vee\vee}$ is reflexive. Next, apply the functor \linebreak $\inhom(E^{\vee\vee},\cdot)$ to the fundamental sequence (\ref{ddsqc}), obtaining
\begin{align} 
0 \to \inhom(E^{\vee\vee},E) & \to \inhom(E^{\vee\vee},E^{\vee\vee})  \to 
\inhom(E^{\vee\vee},Q_E) \to \label{inhomf-} \\
& \to \inext^1(E^{\vee\vee},E) \to \inext^1(E^{\vee\vee},E^{\vee\vee}) \to \inext^1(E^{\vee\vee},Q_E) \to 0.
\end{align}
Comparing Euler characteristics of these last two sequences, we conclude that
$$ \chi(\inhom(E^{\vee\vee},E^{\vee\vee})) - \chi(\inext^1(E^{\vee\vee},E^{\vee\vee})) = $$
$$ = \chi(\inhom(E,E)) - \chi(\inext^1(E,E)) - \chi(\inext^1(Q_E,E)) + \chi(\inext^2(Q_E,E)) + $$
$$ +\chi(\inhom(E^{\vee\vee},Q_E)) - \chi(\inext^1(E^{\vee\vee},Q_E)). $$
Thus, since $\chi(\inext^2(E,E))=\chi(\inext^3(Q_E,E))$, it is now enough to show that
\begin{equation} \label{sum=sum}
\sum_{j=0}^3 (-1)^j\chi(\inext^j(Q_E,E)) = - \sum_{j=0}^3 (-1)^j\chi(\inext^j(E^{\vee\vee},Q_E)),
\end{equation}
noticing that $\inhom(Q_E,E)=0$ and $\inext^j(E^{\vee\vee},Q_E)=0$ for $j=2,3$.

We first consider the left hand side of (\ref{sum=sum}). One can break a locally free resolution of $E$ as in (\ref{E resolution}) into short exact sequences
$$ 0 \to L_2 \to L_1 \to T \to 0 ~~{\rm and}~~ 0 \to T \to L_0 \to E \to 0 . $$
Applying the functor $\inhom(Q_E,\cdot)$ to the first sequence we obtain
$$ 0 \to \inext^2(Q_E,T) \to \inext^3(Q_E,L_2) \to \inext^3(Q_E,L_1) \to \inext^3(Q_E,T) \to 0,$$
with all the other sheaves vanishing. Passing to Euler characteristics we obtain
$$ \chi(\inext^2(Q_E,T)) - \chi(\inext^3(Q_E,T)) = \chi(\inext^3(Q_E,L_2)) - \chi(\inext^3(Q_E,L_1)). $$
But $\inext^3(Q_E,L_k)=\inext^3(Q_E,\op3)\otimes L_k$, hence $\chi(\inext^3(Q_E,L_k))=\rk(L_k)\cdot\chi(Q_E)$. Therefore
\begin{equation}\label{chi1}
\chi(\inext^2(Q_E,T)) - \chi(\inext^3(Q_E,T)) = (\rk(L_2)-\rk(L_1))\chi(Q_E).
\end{equation}
Next, apply the functor $\inhom(Q_E,\cdot)$ to the second part of (\ref{E resolution}) to obtain the isomorphism $\inext^1(Q_E,E)\simeq \inext^2(Q_E,T)$ and the exact sequence
$$ 0 \to \inext^2(Q_E,E) \to \inext^3(Q_E,T) \to \inext^3(Q_E,L_0) \to \inext^3(Q_E,E) \to 0. $$
Passing to Euler characteristics we obtain
$$ \chi(\inext^2(Q_E,E)) - \chi(\inext^3(Q_E,E)) = \chi(\inext^3(Q_E,T)) - \chi(\inext^3(Q_E,L_0)); $$
Subtracting $\inext^1(Q_E,E)$ from the left hand side and $\inext^2(Q_E,T)$ from the right hand side, and then substituting for (\ref{chi1}) we obtain
$$ \sum_{j=0}^3 (-1)^j\chi(\inext^j(Q_E,E)) = (\rk(L_1)-\rk(L_2)-\rk(L_0))\cdot\chi(Q_E) = -2\chi(Q_E) .$$

Finally, we compute the right hand side of (\ref{sum=sum}) in a similar way. Take a locally free resolution of $E^{\vee\vee}$:
$$ 0 \to M_1 \to M_0 \to E^{\vee\vee} \to 0. $$
Applying the functor $\inhom(\cdot,Q_E)$ and passing to Euler characteristics we obtain
\begin{align*}
\chi(\inhom(E^{\vee\vee},Q_E)) - \chi(\inext^1(E^{\vee\vee},Q_E)) & = 
\chi(\inhom(M_0,Q_E)) - \chi(\inhom(M_1,Q_E)) \\ & = 2\chi(Q_E),
\end{align*}
as desired.
\end{proof}

To complete this section, we consider semistable rank 2 torsion free sheaves with 0-dimensional singularities. 

\begin{lemma}\label{stability-lemma}
Let $E$ be a rank 2 torsion free sheaf on $\p3$ with $c_1(E)=0$, $c_2(E)=n$, $c_3(E)=0$, and with 0-dimensional singularities. If $E$ is semistable, then $E^{\vee\vee}$ is stable.
\end{lemma}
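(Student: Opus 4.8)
The plan is to argue by contradiction, transferring a hypothetical destabilizing subsheaf of $E^{\vee\vee}$ back to $E$ along the fundamental sequence (\ref{ddsqc}) and contradicting the semistability of $E$. Recall that a (Gieseker) semistable sheaf is $\mu$-semistable, and that for a rank $2$ reflexive sheaf with $c_1=0$ one tests stability on \emph{saturated} rank $1$ subsheaves. So I would begin by assuming that $E^{\vee\vee}$ is not stable; since $\mu$-stability of a rank $2$ sheaf implies stability, this produces a rank $1$ subsheaf $N\hookrightarrow E^{\vee\vee}$ with $c_1(N)\ge 0$. Replacing $N$ by its saturation in $E^{\vee\vee}$ — which is a saturated rank $1$ subsheaf of a reflexive sheaf on the smooth threefold $\p3$, hence itself reflexive and therefore a line bundle $\op3(a)$ with $a\ge 0$ — I obtain a saturated inclusion $\op3(a)\hookrightarrow E^{\vee\vee}$.

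The next step is to intersect this line bundle with $E$ inside $E^{\vee\vee}$. Setting $F:=\op3(a)\cap E$, the composite $\op3(a)\hookrightarrow E^{\vee\vee}\onto Q_E$ has kernel exactly $F$, so $\op3(a)/F$ embeds into $Q_E$. Since $E$ has $0$-dimensional singularities, $Q_E$ is $0$-dimensional, hence so is $\op3(a)/F$; consequently $F\cong\mathcal{I}_Z(a)$ for some $0$-dimensional subscheme $Z\subset\p3$ of length $z:=\lt(Z)\le\lt(Q_E)$, and in particular $c_1(F)=a$. Thus $F$ is a rank $1$ subsheaf of $E$, and $\mu$-semistability of $E$ forces $a=c_1(F)\le 0$, whence $a=0$.

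It remains to contradict the semistability of $E$ using the rank $1$ subsheaf $F=\mathcal{I}_Z\hookrightarrow E$ with $c_1(F)=0$. Here I would invoke Riemann--Roch: for a rank $2$ torsion free sheaf with $c_1=0$, $c_2=n$ and $c_3=0$ one computes $\chi(E(t))=2\binom{t+3}{3}-n(t+2)$, so the reduced Hilbert polynomial of $E$ is $p_E(t)=\binom{t+3}{3}-\tfrac{n}{2}(t+2)$, while $p_F(t)=\chi(\mathcal{I}_Z(t))=\binom{t+3}{3}-z$. Semistability would require $p_F(t)\le p_E(t)$ for $t\gg 0$, i.e.\ $\tfrac{n}{2}(t+2)\le z$; but $z$ is a fixed nonnegative integer and $n\ge 1$ in our situation, since $E$ is not locally free (so $Q_E\ne 0$) and $c_3(E)=0$, which via $c_3(E)=c_3(E^{\vee\vee})-2\lt(Q_E)$ gives $c_3(E^{\vee\vee})=2\lt(Q_E)>0$, forcing $E^{\vee\vee}$ to be reflexive but not locally free and hence $n=c_2(E^{\vee\vee})\ge 1$. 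Therefore the required inequality fails for $t$ large, and this contradiction shows that $E^{\vee\vee}$ is stable.

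The technical heart of the argument is the second step: because the discrepancy $Q_E$ between $E$ and $E^{\vee\vee}$ is purely $0$-dimensional, intersecting a saturated destabilizing line bundle $\op3(a)$ with $E$ preserves $c_1$ and returns a genuine rank $1$ subsheaf of $E$ of the same slope. The only subtlety to watch is the borderline value $a=0$, where $\mu$-semistability alone is insufficient and one must appeal to the finer Gieseker semistability through the $-n(t+2)$ term in the Euler characteristic. Equivalently, one could run the Hilbert polynomial comparison directly for arbitrary $a\ge 0$, the inequality $\binom{t+a+3}{3}\ge\binom{t+3}{3}$ rendering the conclusion independent of the preliminary reduction to $a=0$.
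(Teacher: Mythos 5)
Your proof is correct and takes essentially the same route as the paper: assume $E^{\vee\vee}$ is not stable, saturate a destabilizing subsheaf to a line bundle $\op3(a)$ with $a\ge 0$ (the paper phrases this as a section $\sigma$ of $E^{\vee\vee}$), intersect with $E$ along the fundamental sequence (\ref{ddsqc}) to obtain an ideal sheaf of a $0$-dimensional scheme inside $E$, and then contradict Gieseker semistability via the $-\tfrac{n}{2}(t+2)$ term in the reduced Hilbert polynomial. Your write-up is in fact slightly more careful than the paper's (which has a sign slip in $p_E(k)-p_{I_\Delta}(k)$ and leaves $n\ge1$ implicit); the one small soft spot is your parenthetical claim that a non-locally-free rank $2$ reflexive sheaf must have $c_2\ge1$, which needs the $\mu$-semistability of $E^{\vee\vee}$ (inherited from $E$) rather than reflexivity alone, but this does not affect the argument.
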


We remark that the vanishing of the third Chern class is an essential hypothesis: the sum of the ideal sheaves $I_x\oplus I_y$ of two points $x,y\in\p3$ is semistable and with 0-dimensional singularities, but $(I_x\oplus I_y)^{\vee\vee}$
is not stable.

\begin{proof}
If $E^{\vee\vee}$ is not $\mu$-stable (or, equivalently, stable), then it has a section $\sigma$. We can then form the following diagram
\begin{equation}\label{stability-diag}\begin{gathered}
\xymatrix{
         & 0 \ar[d]              & 0 \ar[d]                      & 0 \ar[d]                   & \\
0 \ar[r] & I_\Delta \ar[d]\ar[r] & \op3 \ar[d]^\sigma \ar[r]     & \calo_\Delta \ar[d] \ar[r] & 0 \\
0 \ar[r] & E \ar[r]              & E^{\vee\vee} \ar[r]^{\varphi} & Q_E \ar[r]                 & 0
} \end{gathered}\end{equation}
where $\Delta$ is a 0-dimensional scheme contained in the support of $Q_E$, and $I_\Delta$ is its ideal sheaf. Notice that one cannot have $\varphi\sigma=0$ because $h^0(E)=0$ by semistability. 

Let $d$ denote the length of $\Delta$; it follows that
$$ p_E(k) - p_{I_\Delta}(k) = -(k+2)n/2 - d < 0 ~~{\rm for} ~~ k ~~ {\rm sufficiently~large} $$
thus $I_\Delta$ would destabilize $E$, contradicting our hypothesis.
\end{proof}

It follows that the only properly semistable torsion free sheaf $E$ with $c_1(E)=c_3(E)=0$, and with 0-dimensional singularities is $2\cdot\op3$. Indeed, assume that $Q_E\ne0$; if $E$ is semistable, then $E^{\vee\vee}$ is $\mu$-stable by Lemma \ref{stability-lemma} above, hence $E$ is also $\mu$-stable and thus stable. When $Q_E=0$, this claim is just \cite[Remark 3.1.1]{H-r}.  

In addition, Hartshorne provides in \cite[Thm. 8.2(b)]{H-r} a bound for the third Chern class of a stable rank 2 reflexive sheaf on $\p3$. Translating this bound to our context, we have the following statement.

\begin{corollary}\label{bound l'}
If $E$ is a semistable rank 2 sheaf on $\p3$ with $c_1(E)=0$, $c_2(E)=n$, $c_3(E)=0$, and with 0-dimensional singularities, then $c_3(E^{\vee\vee})\le n^2-n+2$.
\end{corollary}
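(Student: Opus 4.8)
The plan is to transfer the problem to the reflexive hull $E^{\vee\vee}$ and quote Hartshorne's bound on the third Chern class of a stable rank 2 reflexive sheaf. The two ingredients already available are Lemma \ref{stability-lemma}, which upgrades semistability of $E$ to stability of $E^{\vee\vee}$, and the fact that passing to the reflexive hull leaves $c_1$ and $c_2$ unchanged when the singularities are $0$-dimensional. The third Chern class $c_3(E)$ itself carries no information here, since it is assumed to vanish; the quantity to be bounded, $c_3(E^{\vee\vee})$, is an invariant of the hull, which is precisely where Hartshorne's theory applies.

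First I would check that $E^{\vee\vee}$ is a stable rank 2 reflexive sheaf with $c_1(E^{\vee\vee})=0$ and $c_2(E^{\vee\vee})=n$. Stability is exactly Lemma \ref{stability-lemma}, the only exception being the properly semistable sheaf $2\op3$ discussed above, for which $c_3(E^{\vee\vee})=0$ and the asserted inequality is vacuous. For the Chern classes, since $E$ has $0$-dimensional singularities, $Q_E$ is supported in dimension $0$, whence $c_1(Q_E)=c_2(Q_E)=0$; reading off $c(E^{\vee\vee})=c(E)c(Q_E)$ from the fundamental sequence (\ref{ddsqc}) then gives $c_1(E^{\vee\vee})=0$ and $c_2(E^{\vee\vee})=n$, which is the identity $m=n$ recorded in case (i) of the introduction.

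Finally I would apply Hartshorne's bound \cite[Thm. 8.2(b)]{H-r}: for a stable rank 2 reflexive sheaf $\calf$ on $\p3$ normalized so that $c_1(\calf)=0$, one has $c_3(\calf)\le c_2(\calf)^2-c_2(\calf)+2$. Substituting $c_2(E^{\vee\vee})=n$ produces exactly $c_3(E^{\vee\vee})\le n^2-n+2$. The proof is thus essentially a citation once the two structural facts above are in place; the only point demanding attention is the bookkeeping, namely confirming that the Chern-class and normalization conventions of \cite{H-r} agree with those used here (in particular that $c_1=0$ is the case covered by part (b), and that the parity of $c_3$ is respected, $n^2-n+2$ being even for every $n$). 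This is where a slip would most likely occur, but it is routine rather than conceptual.
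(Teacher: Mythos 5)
Your proposal is correct and follows exactly the route the paper intends: Lemma \ref{stability-lemma} gives stability of $E^{\vee\vee}$, the $0$-dimensionality of $Q_E$ gives $c_1(E^{\vee\vee})=0$ and $c_2(E^{\vee\vee})=n$, and Hartshorne's bound $c_3\le c_2^2-c_2+2$ from \cite[Thm.~8.2(b)]{H-r} then yields the claim. The paper offers no separate proof beyond this citation, so your write-up (including the harmless edge case $E=2\cdot\op3$) is essentially the paper's argument made explicit.
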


\begin{lemma}\label{ext lemma}
Let $E$ be a rank 2 torsion free sheaf on $\p3$ with $c_1(E)=0$, $c_2(E)=n$, $c_3(E)=0$, and with 0-dimensional singularities. If $E$ is stable, then
\begin{itemize}
\item[(a)] $\dim\ext^1(E,E) = 8n-3 + \dim\ext^2(E,E)$;
\item[(b)] $\ext^1(E,E) = H^1(\inhom(E,E)) \oplus \ker d^{01}_2$;
\item[(c)] $\ext^2(E,E) = H^0(\inext^2(E,E)) \oplus \coker d^{01}_2$.
\end{itemize}
\end{lemma}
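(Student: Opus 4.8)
The plan is to handle the three assertions together, noting first that (b) is nothing but item (i) of the local-to-global spectral-sequence computation recorded just after \eqref{E resolution}, so it requires no further argument. Both (a) and (c) will then follow once I establish two facts: that $E$ is simple, i.e. $\ext^0(E,E)=\CC$, which is immediate from stability; and that $H^3(\inhom(E,E))=0$. Indeed, recall from items (ii) and (iii) of that computation that $\ext^2(E,E)=\ker d^{02}_3\oplus\coker d^{01}_2$ and $\ext^3(E,E)=\coker d^{02}_3$, where $d^{02}_3\colon H^0(\inext^2(E,E))\to H^3(\inhom(E,E))$ is the map \eqref{d023}. So the moment the target $H^3(\inhom(E,E))$ is known to vanish, the map $d^{02}_3$ is forced to be zero, whence $\ker d^{02}_3=H^0(\inext^2(E,E))$, which is exactly (c), and simultaneously $\ext^3(E,E)=\coker d^{02}_3=0$.

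The crux is therefore the vanishing $H^3(\inhom(E,E))=0$, and my strategy is to transport the computation from $E$ to its reflexive hull $E^{\vee\vee}$ along the two exact sequences already produced in the proof of Lemma~\ref{chi ext tf lemma}. On one hand, \eqref{inhom-e} exhibits $\inhom(E^{\vee\vee},E)$ as a subsheaf of $\inhom(E,E)$ whose cokernel injects into $\inext^1(Q_E,E)$; since $Q_E$ is $0$-dimensional, this cokernel is supported on the finite set $\supp(Q_E)$. On the other hand, \eqref{inhomf-} exhibits the same sheaf $\inhom(E^{\vee\vee},E)$ as a subsheaf of $\inhom(E^{\vee\vee},E^{\vee\vee})$ with cokernel injecting into the $0$-dimensional sheaf $\inhom(E^{\vee\vee},Q_E)$. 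Because the higher cohomology of a $0$-dimensional sheaf vanishes, the two long exact cohomology sequences yield isomorphisms $H^3(\inhom(E,E))\cong H^3(\inhom(E^{\vee\vee},E))\cong H^3(\inhom(E^{\vee\vee},E^{\vee\vee}))$.

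It then remains to show $H^3(\inhom(E^{\vee\vee},E^{\vee\vee}))=0$. Here $E^{\vee\vee}$ is reflexive and, by Lemma~\ref{stability-lemma}, stable, so by item (iii') of the Remark this group equals $\ext^3(E^{\vee\vee},E^{\vee\vee})$, which by Serre duality is $\Hom(E^{\vee\vee},E^{\vee\vee}(-4))^{\vee}$. A nonzero homomorphism $E^{\vee\vee}\to E^{\vee\vee}(-4)$ between these two stable sheaves of slopes $0$ and $-4$ is impossible, its image being at once a quotient of slope $\ge 0$ and a subsheaf of slope $\le -4$; hence the group vanishes, and (c) is proved. Finally, for (a) I combine $\ext^0(E,E)=\CC$, the just-established $\ext^3(E,E)=0$, and the Euler-characteristic identity of Lemma~\ref{chi ext tf lemma}, which now reads $1-\dim\ext^1(E,E)+\dim\ext^2(E,E)-0=-8n+4$ and rearranges precisely to (a).

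The main obstacle is the vanishing of $H^3(\inhom(E,E))$: the difficulty is that $\inhom(E,E)$ is itself neither reflexive nor readily dualized, so Serre duality cannot be applied to it directly. The device of sandwiching $\inhom(E^{\vee\vee},E)$ between $\inhom(E,E)$ and $\inhom(E^{\vee\vee},E^{\vee\vee})$, with both cokernels $0$-dimensional and hence invisible to top-degree cohomology, is what legitimately reduces the question to the reflexive case, where stability and Serre duality settle it at once.
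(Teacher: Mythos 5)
Your proposal is correct and follows essentially the same route as the paper: simplicity and the Euler characteristic identity of Lemma~\ref{chi ext tf lemma} for (a), item (i) of the spectral sequence for (b), and for (c) the sandwich of $\inhom(E^{\vee\vee},E)$ between $\inhom(E,E)$ and $\inhom(E^{\vee\vee},E^{\vee\vee})$ via the sequences (\ref{inhom-e}) and (\ref{inhomf-}) to get $H^3(\inhom(E,E))\simeq\ext^3(E^{\vee\vee},E^{\vee\vee})=0$ and hence $d^{02}_3=0$. The only cosmetic difference is that you deduce $\ext^3(E,E)=0$ as $\coker d^{02}_3$ from the vanishing of $H^3(\inhom(E,E))$, whereas the paper gets it directly from stability of $E$; both are valid.
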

\begin{proof}
The stability of $E$ implies that $h^0(\inhom(E,E))=1$ and $\ext^3(E,E)=0$. Thus items (a) and (b) are immediate from items (i)-(iii) in page \pageref{l2g ext}. 

Since $\inext^1(Q_E,E)$ has dimension zero, we get from sequence (\ref{inhom-e}) that 
\begin{equation}\label{iso inhoms}
H^i(\inhom(E,E))\simeq H^i(\inhom(E^{\vee\vee},E)) ~~{\rm for}~~ i=2,3.
\end{equation}
Similarly, since $\inext^1(E^{\vee\vee},E)$ has dimension zero, we get from sequence (\ref{inhomf-}) that 
$$ H^i(\inhom(E^{\vee\vee},E^{\vee\vee}))\simeq H^i(\inhom(E^{\vee\vee},E)) ~~{\rm for}~~ i=2,3. $$ 
Putting the isomorphisms above, we get
\begin{equation}\label{iso hi inhom}
H^i(\inhom(E,E))\simeq H^i(\inhom(E^{\vee\vee},E^{\vee\vee})) ~~{\rm for}~~ i=2,3.
\end{equation}
In particular, $H^3(\inhom(E,E))\simeq\ext^3(E^{\vee\vee},E^{\vee\vee})=0$ since $E^{\vee\vee}$ is stable. Thus the spectral sequence map $d^{02}_3$ in (\ref{d023}) vanishes. This gives item (c) of the Lemma. 
\end{proof}

\subsection{Components of sheaves with 0-dimensional singularities} \label{0dsing}

In this section we will show how to produce irreducible components of $\calm(n)$ whose generic point corresponds to a sheaf with 0-dimensional singularities.

Start by considering the following ingredients:
\begin{itemize}
\item[(i)] a stable rank 2 reflexive sheaf $F$ on $\p3$ with $c_1(F)=0$, $c_2(F)=n$ and $c_3(F)=2l$;
\item[(ii)] a 0-dimensional sheaf $Q$ of length $l$ on $\p3$;
\item[(iii)] an epimorphism $\varphi: F \to Q$.
\end{itemize}
Now let $E:=\ker\varphi$. Clearly, this is a ($\mu$-)stable rank 2 torsion free sheaf with $c_1(E)=0$, $c_2(E)=n$ and $c_3(E)=0$ such that $E^{\vee\vee}=F$ and $E^{\vee\vee}/E=Q$; in particular, $\sing(E)$ has dimension 0.

\begin{proposition}\label{4l}
Let $F$ be a stable rank 2 reflexive sheaf with $c_1(F)=0$, $c_2(F)=n$ and $c_3(F)=2l$ such that $\ext^2(F,F)=0$. Take $l$ distinct points $q_1,\dots,q_l$ such that $\{q_1,\dots,q_l\}\cap\sing(F)=\emptyset$, and set $Q:=\oplus_{j=1}^{l}\calo_{q_j}$. Then the kernel $E$ of any epimorphism $\varphi: F \onto Q$ satisfies $\dim\ext^1(E,E)=8n-3+4l$.
\end{proposition}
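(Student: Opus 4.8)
The plan is to reduce everything to a computation of $\ext^2(E,E)$ and then to evaluate that group exactly by a pair of long exact sequences attached to the fundamental sequence $0\to E\to F\to Q\to 0$. By Lemma \ref{ext lemma}(a) one has $\dim\ext^1(E,E)=8n-3+\dim\ext^2(E,E)$, so it suffices to show $\dim\ext^2(E,E)=4l$. Rather than chase the spectral-sequence differential $d^{01}_2$ of Lemma \ref{ext lemma}(c), I would extract $\ext^2(E,E)$ directly: applying $\Hom(-,E)$ to the fundamental sequence gives the four-term exact piece
\begin{equation*}
\ext^2(F,E) \lra \ext^2(E,E) \lra \ext^3(Q,E) \lra \ext^3(F,E),
\end{equation*}
so the proposition reduces to showing that the two outer terms vanish and that $\ext^3(Q,E)$ has dimension $4l$.

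For the vanishing I would apply $\Hom(F,-)$ to the same sequence. Since the $q_j$ avoid $\sing(F)$, the sheaf $F$ is locally free near $\supp Q$, so $\inext^i(F,Q)=0$ for $i>0$ and $\inhom(F,Q)=F^\vee\otimes Q$ is $0$-dimensional; hence $\ext^i(F,Q)=H^i(F^\vee\otimes Q)=0$ for every $i\ge1$. Combining this with the hypothesis $\ext^2(F,F)=0$ in the long exact sequence yields $\ext^2(F,E)=0$. For $\ext^3(F,E)$ the same sequence reduces it to $\ext^3(F,F)$, which I would kill via Serre duality: $\ext^3(F,F)\simeq\Hom(F,F(-4))^\vee$, and the right-hand side vanishes because $F$ and $F(-4)$ are $\mu$-stable of slopes $0$ and $-4$, so there are no nonzero homomorphisms between them. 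Thus $\ext^3(F,E)=0$, and the four-term sequence collapses to an isomorphism $\ext^2(E,E)\simeq\ext^3(Q,E)$.

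It remains to compute $\ext^3(Q,E)$. As $Q$ is $0$-dimensional, all the sheaves $\inext^q(Q,E)$ are supported on $\{q_1,\dots,q_l\}$, so the local-to-global spectral sequence degenerates and $\ext^3(Q,E)=H^0(\inext^3(Q,E))$; equivalently one may invoke the isomorphism $\inext^2(E,E)\simeq\inext^3(Q,E)$ recorded at the start of the section. The remaining computation is local at each $q_j$: since $q_j\notin\sing(F)$, $F$ is free of rank $2$ there, and after a change of basis an epimorphism $\calo^2\to\calo_{q_j}$ has kernel $I_{q_j}\oplus\calo$, so $E\simeq I_{q_j}\oplus\calo$ near $q_j$. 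A Koszul resolution of $\calo_{q_j}$ identifies $\inext^3(\calo_{q_j},M)$ with $M\otimes\calo_{q_j}=M/\mathfrak{m}_{q_j}M$; taking $M=I_{q_j}$ and $M=\calo$ gives lengths $3$ and $1$, so $\lt\,\inext^3(\calo_{q_j},E)=4$. Summing over the $l$ points gives $\ext^3(Q,E)=4l$, whence $\dim\ext^2(E,E)=4l$ and $\dim\ext^1(E,E)=8n-3+4l$.

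The main obstacle is organizational rather than computational: the isomorphism $\ext^2(E,E)\simeq\ext^3(Q,E)$ rests on the simultaneous vanishing of $\ext^2(F,E)$ and $\ext^3(F,E)$, and this is precisely where both hypotheses are consumed—$\ext^2(F,F)=0$ kills the first, while the $\mu$-stability of $F$, through Serre duality, kills the second. One must also check carefully that the auxiliary groups $\ext^i(F,Q)$ really vanish for $i\ge1$, which holds only because the $q_j$ were chosen away from $\sing(F)$; without this general-position hypothesis $F$ would fail to be locally free at the points and both the vanishing and the final length count of $4$ per point would break down. (As a consistency check, the resulting value $\dim\ext^2(E,E)=4l=h^0(\inext^2(E,E))$ forces $\coker d^{01}_2=0$ in Lemma \ref{ext lemma}(c).)
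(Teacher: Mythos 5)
Your proof is correct, and it shares with the paper both the initial reduction (to $\dim\ext^2(E,E)=4l$ via Lemma \ref{ext lemma}(a)) and the final local length count ($3+1=4$ at each $q_j$), but the middle step is organized along a genuinely different route. The paper stays inside the local-to-global spectral sequence for $\ext^\bullet(E,E)$: it proves that the differential $d^{01}_2$ is surjective by comparing it, through a commutative diagram, with the corresponding differential for $\ext^\bullet(F,E)$, whose cokernel is $\ext^2(F,E)$; the vanishing $\ext^2(F,E)=0$ (from $\ext^{\ge1}(F,Q)=0$ and the hypothesis $\ext^2(F,F)=0$) then yields $\ext^2(E,E)=H^0(\inext^2(E,E))$ via Lemma \ref{ext lemma}(c), and the local $\inext^2(E,E)$ is computed at each point. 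You instead read off $\ext^2(E,E)\simeq\ext^3(Q,E)$ directly from the long exact sequence of $\Hom(-,E)$ applied to $0\to E\to F\to Q\to 0$, which costs you one additional vanishing, $\ext^3(F,E)=0$; you pay for it with $\ext^3(F,F)\simeq\Hom(F,F(-4))^\vee=0$ by Serre duality and $\mu$-semistability, which is legitimate (the paper effectively buries the analogous input in Lemma \ref{ext lemma}, whose proof uses the stability of $E^{\vee\vee}=F$ to kill $H^3(\inhom(E,E))$ and hence the differential $d^{02}_3$). What your route buys is the elimination of the diagram chase and of any explicit appeal to the spectral-sequence differentials for $(E,E)$; what it requires in exchange is the extra Serre-duality vanishing and the degeneration of the local-to-global sequence for $\ext^\bullet(Q,E)$, both of which you justify. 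The two length-4 computations are in the end the same, since $\inext^2(E,E)\simeq\inext^3(Q,E)$ identifies the paper's local sheaf with yours, and your Koszul identification $\inext^3(\calo_{q},M)\simeq M\otimes\calo_{q}$ correctly gives lengths $3$ and $1$ for $M=I_{q}$ and $M=\calo$.
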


\begin{proof}
Since $E$ is stable, it is enough, by Lemma \ref{ext lemma}(a), that $\dim\ext^2(E,E)=4l$. The first step is to show that the spectral sequence map (\ref{d012}) is surjective. Indeed, one has the commutative diagram
\begin{equation} \label{d012-diagram}
\xymatrix{ H^0\inext^1(F,E) \ar[d]\ar[r]^{d^{01}_2} & H^2(\inhom(F,E)) \ar[d]^{\simeq} \\
H^0\inext^1(E,E) \ar[r]^{d^{01}_2} & H^2(\inhom(E,E))
}\end{equation}
where vertical arrow in the left is the natural map coming from the exact sequence 
\begin{equation}\label{efq}
0\to E \to F \to Q \to 0, 
\end{equation}
while the vertical arrow in the right is the natural isomorphism obtained as in (\ref{iso inhoms}). Applying
$\Hom(F,\cdot)$ to the sequence (\ref{efq}) we get
$$ \ext^1(F,Q) \to \ext^2(F,E) \to \ext^2(F,F). $$

To see that $\ext^1(F,Q)=0$, note that $H^i(\inext^j(F,Q))=0$ if $i,j\ne0$: indeed, $\inext^j(F,Q)=0$ for $j=2,3$ because $F$ is reflexive; $\inext^1(F,Q)=0$ because the singular locus of $F$ is disjoint from the support of $Q$; and $\inhom(F,Q)$ has dimension 0. It follows from the spectral sequence of local to global Ext's that $\ext^j(F,Q)=0$ for $j>0$. Since, by hypothesis, $\ext^2(F,F)=0$, it follows that $\ext^2(F,E)$. Since $\ext^2(F,E)=\coker d^{01}_2$, we obtain that the top horizontal map is surjective, hence the bottom horizontal map, which is precisely the spectral sequence map (\ref{d012}), is also surjective.

It then follows from Lemma \ref{ext lemma}(c) that $\dim\ext^2(E,E)=h^0(\inext^2(E,E))$. To compute this, note that
\begin{equation}\label{sum of exts}
H^0(\inext^2(E,E)) = \bigoplus_{p\in\sing(E)} \ext^2_{\calo_{\p3,p}}(E_p,E_p),
\end{equation}
where $\sing(E)=\sing(F)\cup\{q_1,\dots,q_l\}$.

First, take $p\in\sing(F)$; since $p\notin \supp(Q)$, we get $E_p\simeq F_p$. However,
$\inext^2_{\calo_{\p3,p}}(F_p,F_p)=\inext^2(F,F)_p=0$ because $F$, being reflexive, has cohomological dimension 1.

Next, take $p=q_j$ for some $1\le j\le l$; restricting the sequence (\ref{efq}) to an open affine subset $U$ of $\p3$ containing $p$ but none of the other singularities of $F$, we have the following short exact sequence of sheaves on $U$:
$$ 0 \to \calo_{U} \oplus I_{p/U} \to 2\cdot\calo_{U} \to \calo_{p/U} \to 0, $$
where $I_{p/U}$ denotes the ideal sheaf of the point $p\in U$ and $\calo_{p/U}$ denotes the structure sheaf of the point $p$ as a subscheme of $U$. It follows that
$$ \ext^2_{\calo_{\p3,p}}(E_p,E_p) = H^0(\inext^2_{\calo_{U}}(I_{p/U},\calo_{p/U})) \oplus 
H^0(\inext^2_{\calo_{U}}(I_{p/U},I_{p/U})). $$
We argue that the first summand has length 1, while the second one has length 3. Indeed, we might as well perform the calculation globally, using the ideal sheaf $I_{p/\p3}$ of the point $p\in\p3$ and its structure sheaf $\calo_{p/\p3}$.

From the exact sequence
$$ 0 \to I_{p/\p3} \to \op3 \to \calo_{p/\p3} \to 0 $$
we obtain that $\inext^2(I_{p/\p3},\op3)\simeq \inext^3(\calo_{p/\p3},\op3)\simeq \calo_{p/\p3}$, so it has length 1.

Now use the resolution of $I_{p/\p3}$ by locally free sheaves:
$$ 0\to \op3(-3) \to 3\cdot\op3(-2) \to 3\cdot\op3(-1) \to I_{p/\p3} \to 0 .$$
Applying the functor $\inhom(\cdot,I_{p/\p3})$ to the sequence
$$ 0 \to G \to 3\cdot\op3(-1) \to I_{p/\p3} \to 0 $$
we obtain that $\inext^2(I_{p/\p3},I_{p/\p3})\simeq\inext^1(G,I_{p/\p3})$. Applying the same functor to the exact sequence   
$$ 0\to \op3(-3) \to 3\cdot\op3(-2) \to G \to 0 $$
we obtain the sequence
$$ 0 \to 3\cdot I_{p/\p3}(2) \to I_{p/\p3}(3) \to \inext^1(G,I_{p/\p3}) \to 0 .$$
Note that the cokernel of the first arrow is just $I_{p/\p3}\otimes\calo_{p/\p3}\simeq 3\cdot\calo_{p/\p3}$, so it has length 3.

Thus the points of $\sing(F)$ do not contribute to (\ref{sum of exts}), while each of the $l$ points in $\supp(Q)$ contributes with a sheaf of length 4. We conclude that $\dim\ext^2(E,E)=4l$, as desired. 
\end{proof}

\bigskip

Now let $\cals(n,l)$ denote an irreducible, open subset of $\calr(0;n;2l)$ whose points correspond to stable reflexive sheaves $F$ satisfying $\ext^2(F,F)=0$; in particular, $\cals(n,l)$ must be the nonsingular locus of an irreducible component of $\calr(0;n;2l)$ of expected dimension $8n-3$. In the product $\cals(n,l)\times(\p3)^l$, we consider the open subset
$$ \left( \cals(n,l)\times(\p3)^l \right)^0 := 
\left\{( [F],q_1,\dots,q_l) ~|~ q_i\ne q_j ~,~ q_i\notin\sing(F) \right\} .$$
Clearly, a point in $\left( \cals(n,l)\times(\p3)^l \right)^0$ can be regarded as a pair of sheaves $([F],Q:=\oplus_{j=1}^{l}\calo_{q_j})$ which fulfills the condition of Proposition \ref{4l}. 
Next, with $([F],Q)\in \left( \cals(n,l)\times(\p3)^l \right)^0$ as above, consider the open set $\Hom(F,Q)_e$ of $\Hom(F,Q)$ consisting of epimorphisms $\varphi: F\twoheadrightarrow Q$; the group $\Aut(Q)$ of automorphisms of the sheaf $Q$ acts on $\Hom(F,Q)_e$ just by homotheties on each factor $\calo_{q_j}$ of $Q$. 

Putting all these data together, we construct the set of triples
$$ \calt(n,l) = \left\{ ([F],Q,\varphi) ~|~ ([F],Q)\in \left( \cals(n,l)\times(\p3)^l \right)^0 ~,~
\varphi\in \Hom(F,Q)_e/\Aut(Q) \right\}. $$ 
By construction, $\calt(n,l)$ is an irreducible, quasi-projective variety of dimension $8n-3+4l$. Indeed, one has the surjective projection
$$  \calt(n,l) \rightarrow \left( \cals(n,l)\times(\p3)^l \right)^0 ~~, ~~ ([F],Q,\varphi) \mapsto ([F],Q) $$
onto an irreducible base variety of dimension $8n-3+3l$, with fibers given by 
$$ \Hom(F,Q)_e/\Aut(Q) \overset{\textrm{open}}{\hookrightarrow} \Hom(F,Q)/\Aut(Q) $$
which have dimension $2l-l=l$. 

To each point $\mathbf{t}:=([F],Q,\varphi)\in\calt(n,l)$ one associates the sheaf
$$ E(\mathbf{t}):= \ker\{ \varphi:F\twoheadrightarrow Q \} $$
which defines a point $[E(\mathbf{t})]$ in $\calm(n)$. Proposition \ref{4l} tell us that, for each
$\mathbf{t}\in\calt(n,l)$, 
$$ \dim\ext^1(E(\mathbf{t}),E(\mathbf{t}))=\dim \calt(n,l), $$
therefore the image of $\calt(n,l)$ into $\calm(n)$ is a dense open subset of an irreducible component of $\calm(n)$; to simplify notation, we denote such component by $\overline{\calt(n,l)}$, the closure of the image of $\calt(n,l)$ within $\calm(n)$.

We summarize the considerations above into the following result.

\begin{theorem} \label{0d comp's}
For every nonsingular irreducible component $\calf$ of $\calr(0;n;2l)$ of expected dimension $8n-3$, there exists an irreducible component $\overline{\calt(n,l)}$ of dimension $8n-3+4l$ in $\calm(n)$ whose generic point $[E]$ satisfies $[E^{\vee\vee}]\in\calf$ and $\lt(Q_E)=l$.
\end{theorem}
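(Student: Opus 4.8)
The plan is to package the constructions preceding the statement into a single irreducibility-plus-dimension-count, using the standard deformation theory of the moduli functor: the Zariski tangent space to $\calm(n)$ at a point $[E]$ with $E$ stable is canonically $\ext^1(E,E)$, and $[E]$ is a smooth point lying on a unique irreducible component as soon as $\dim\ext^1(E,E)$ equals the local dimension of $\calm(n)$ at $[E]$.

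First I would fix the input. Given a nonsingular component $\calf$ of $\calr(0;n;2l)$ of expected dimension $8n-3$, I take $\cals(n,l)=\calf$ (or a dense open subset). Along $\calf$ the sheaves $F$ satisfy $\ext^2(F,F)=0$: indeed, for a stable reflexive $F$ one has $\ext^0(F,F)=1$ and $\ext^3(F,F)=0$, so by the reflexive Euler-characteristic formula $\ext^1(F,F)-\ext^2(F,F)=8n-3$, and nonsingularity with expected dimension $8n-3$ forces $\ext^2(F,F)=0$. This is exactly the hypothesis of Proposition \ref{4l}, and it makes $\cals(n,l)$ an admissible base for the variety $\calt(n,l)$, which, as recorded above, is irreducible and quasi-projective of dimension $8n-3+4l$ and carries the classifying morphism $\Phi:\calt(n,l)\to\calm(n)$, $\mathbf{t}\mapsto[E(\mathbf{t})]$.

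The crux is that $\Phi$ is injective, so that its image has dimension exactly $8n-3+4l$. From a stable torsion-free $E=E(\mathbf{t})$ one recovers the reflexive hull $F=E^{\vee\vee}$ and the quotient $Q=E^{\vee\vee}/E=Q_E$ intrinsically; moreover two epimorphisms $\varphi,\varphi':F\onto Q$ share the same kernel precisely when they differ by an element of $\Aut(Q)$, so the class $\varphi\bmod\Aut(Q)$ is likewise determined by $[E]$. Hence the whole triple $\mathbf{t}$ is reconstructed from $[E(\mathbf{t})]$, and $\dim\overline{\Phi(\calt(n,l))}=\dim\calt(n,l)=8n-3+4l$.

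Finally I would run the smoothness-forces-a-component argument. By Proposition \ref{4l}, $\dim\ext^1(E(\mathbf{t}),E(\mathbf{t}))=8n-3+4l$ for every $\mathbf{t}$, so at each point of $\Phi(\calt(n,l))$ the tangent space to $\calm(n)$ has dimension $8n-3+4l$; consequently any irreducible component $W$ meeting the image satisfies $\dim W\le 8n-3+4l$. Together with $\dim\overline{\Phi(\calt(n,l))}=8n-3+4l$ and the inclusion $\overline{\Phi(\calt(n,l))}\subseteq W$, this forces $W=\overline{\Phi(\calt(n,l))}=:\overline{\calt(n,l)}$, an irreducible component of the asserted dimension; since the tangent-space dimension equals the local dimension there, the image consists of smooth points on the single component $W$, and by Chevalley's theorem it is constructible and hence contains a dense open subset of $W$. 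The generic-point assertions $[E^{\vee\vee}]=[F]\in\calf$ and $\lt(Q_E)=\lt(Q)=l$ are immediate from the construction of $\calt(n,l)$. The main obstacle is the injectivity of $\Phi$: without it one only bounds the codimension of the image and cannot conclude it fills a component, so the intrinsic reconstruction of $(F,Q,\varphi\bmod\Aut(Q))$ from $E$ is what makes the dimension count exact. The genuine technical input, namely $\dim\ext^1(E,E)=8n-3+4l$, is already delivered by Proposition \ref{4l} via Lemma \ref{ext lemma}.
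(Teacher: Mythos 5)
Your proof is correct and follows essentially the same route as the paper: it builds the parameter space $\calt(n,l)$ of triples $([F],Q,\varphi)$ over the nonsingular component, invokes Proposition \ref{4l} for $\dim\ext^1(E,E)=8n-3+4l$, and matches this against the dimension of the image to conclude that the closure is a component. The only differences are that you make explicit two points the paper leaves implicit, namely the injectivity of the classifying map (via intrinsic recovery of $F=E^{\vee\vee}$, $Q_E$, and $\varphi\bmod\Aut(Q)$) and the equivalence of the hypothesis ``nonsingular of expected dimension $8n-3$'' with the condition $\ext^2(F,F)=0$ used in Proposition \ref{4l}.
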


\subsection{An Ein type result for sheaves with 0-dimensional singularities}

Recall that Ein has shown in \cite[Proposition 3.6]{Ein} the the number of irreducible components of $\calb(n)$ is unbounded as $n$ grows. We now prove a similar statement for those irreducible components of $\calm(n)$ whose generic points correspond to sheaves with 0-dimensional singularities.

We begin by considering morphisms
$$ \alpha: a\cdot\op3(-3) \oplus b\cdot\op3(-2) \oplus c\cdot\op3(-1) \to (a+b+c+2)\cdot\op3 $$
whose degeneracy locus
$$ \Delta(\alpha) = \{ x\in\p3 ~|~ \alpha(x) ~~ \text{is not injective} \} $$
is 0-dimensional. It follows that the cokernel of $\alpha$ is a rank 2 reflexive sheaf on $\p3$, which we normalize as to fit into the short exact sequence:
\begin{equation}\label{newclass}
0 \to a\cdot\op3(-3) \oplus b\cdot\op3(-2) \oplus c\cdot\op3(-1) \stackrel{\alpha}{\longrightarrow}
(a+b+c+2)\cdot\op3 \to F(k) \to 0,
\end{equation} 
with $a,b,c\ge0$ and such that $3a+2b+c$ is nonzero and even; we set $k:=(3a+2b+c)/2$, so that $c_1(F)=0$.

For simplicity of notation, let
$$ G_{(a,b,c)}:=a\cdot\op3(-3) \oplus b\cdot\op3(-2) \oplus c\cdot\op3(-1). $$
The dimension of the family of rank 2 reflexive sheaves constructed as in equation (\ref{newclass}) is given by
$$ \dim\Hom\left(G_{(a,b,c)}, (a+b+c+2)\cdot\op3\right) - \dim\Aut\left(G_{(a,b,c)}\right) - (a+b+c+2)^2 + 1 = $$ 
$$ = 8k^2+24k-8(b+c)-3 = 8c_2(F) - 3. $$

One easily checks for that $h^0(F)=0$ for every $F$ given by (\ref{newclass}), thus $F$ is always stable. In addition, it is not hard to check that $\ext^2(F,F)=0$. Indeed, applying the functor $\Hom(\cdot,F(k))$ to the sequence (\ref{newclass}), we obtain
$$ \ext^1\left(G_{(a,b,c)},F(k)\right) \to \ext^2(F,F) \to \ext^2((a+b+c+2)\cdot\op3,F(k)). $$
The group on the left vanishes because $H^1(F(t))=0$ for every $t\in\Z$, while the group on the right vanishes because $H^2(F(k))=0$. We conclude from \cite[Prop. 3.4]{H-r} that $\dim\ext^1(F,F)=8c_2(F)-3$, matching the dimension of the family as computed in the previous paragraph. It follows that the family of sheaves given by (\ref{newclass}) provides an component of the moduli space of stable rank 2 reflexive sheaves on $\p3$.

Summarizing the results obtained so far, we have the following theorem.

\begin{theorem}\label{thm newclass}
For each triple $(a,b,c)$ of positive integers such that $3a+2b+c$ is nonzero and even, the rank 2 reflexive sheaves given by (\ref{newclass}) fill out an irreducible, nonsingular, component $\cals(a,b,c)$ of $\calr(0;n;m)$ of expected dimension $8n-3$, where $n$ and $m$ are given by the expressions:
\begin{align*}
n = & \frac{1}{4}(3a+2b+c)^2 + \frac{3}{2}(3a+2b+c) - (b+c), \\
m=m(a,b,c) = & 27{{a+2}\choose{3}} + 8{{b+2}\choose{3}} + {{c+2}\choose{3}} + 3(3a+2b+5)ab + \\
    & + \frac{3}{2}(3a+c+4)ac + (2b+3c+3)bc + 6abc
\end{align*}
More precisely, let $\widetilde{\cals}(a,b,c)\subset \Hom\left(G_{(a,b,c)},(a+b+c+2)\cdot\op3\right)$ be the open subset consisting of monomorphisms with 0-dimensional degeneracy loci; then
$$ \cals(a,b,c) = \widetilde{\cals}(a,b,c) / (\Aut(G_{(a,b,c)})\times GL(a+b+c+2))/\CC^*) . $$
\end{theorem}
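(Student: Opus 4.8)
The analytic heart of the statement --- stability, the vanishing $\ext^2(F,F)=0$, and the resulting equality $\dim\ext^1(F,F)=8c_2(F)-3$ obtained via \cite[Prop.~3.4]{H-r} --- has already been established in the discussion preceding the theorem. The plan is therefore to (a) extract the explicit Chern numbers $n$ and $m$ from the resolution, and (b) assemble these inputs into the assertion that $\cals(a,b,c)$ is a nonsingular component of the expected dimension.

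First I would compute the Chern classes directly from (\ref{newclass}). Since $c((a+b+c+2)\cdot\op3)=1$, the Whitney formula gives $c(F(k))=(1-3H)^{-a}(1-2H)^{-b}(1-H)^{-c}$, where $H$ is the hyperplane class. The choice $k=(3a+2b+c)/2$ makes $c_1(F)=c_1(F(k))-2k=0$, and untwisting $c_2$ and $c_3$ by the standard rank-$2$ relations, for instance $c_2(F)=c_2(F(k))-k^2$, converts the coefficients of $H^2$ and $H^3$ in the power-series expansion into the stated polynomials $n$ and $m(a,b,c)$. This is a routine but lengthy expansion of a triple product of geometric series.

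Next I would analyze the family. The set $\widetilde{\cals}(a,b,c)$ is open in the affine space $\Hom(G_{(a,b,c)},(a+b+c+2)\cdot\op3)$, hence irreducible, and so is its quotient $\cals(a,b,c)$ by $\Gamma:=(\Aut(G_{(a,b,c)})\times GL(a+b+c+2))/\CC^*$. To compute $\dim\cals(a,b,c)$ I would first observe that (\ref{newclass}) is the \emph{minimal} free resolution of $F(k)$, because every entry of $\alpha$ is a homogeneous form of positive degree and so lies in the irrelevant ideal. Minimality forces any lift of an automorphism of $F(k)$ to the resolution to be unique, so the stabilizer of $\alpha$ in $\Aut(G_{(a,b,c)})\times GL(a+b+c+2)$ is exactly $\Aut(F(k))=\CC^*$ ($F$ being stable, hence simple); dividing by this central $\CC^*$ makes $\Gamma$ act freely, giving $\dim\cals(a,b,c)=\dim\Hom-\dim\Aut(G_{(a,b,c)})-(a+b+c+2)^2+1=8n-3$, matching the value computed before the theorem.

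Finally I would identify the component. Minimality of the resolution also shows that $F$ determines $\alpha$ up to the $\Gamma$-action, so the natural morphism $\cals(a,b,c)\to\calr(0;n;m)$, $[\alpha]\mapsto[(\coker\alpha)(-k)]$, is injective, and stability ($h^0(F)=0$) places its image in $\calr(0;n;m)$. Because $\ext^2(F,F)=0$, the obstruction space vanishes, so $\calr(0;n;m)$ is nonsingular at each such $[F]$ with local dimension $\dim\ext^1(F,F)=8n-3$. An irreducible variety of dimension $8n-3$ that maps injectively into the smooth locus of a scheme whose local dimension there is also $8n-3$ must have open image; by irreducibility its closure is then a single irreducible component, necessarily nonsingular of the expected dimension $8n-3$. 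The hard part will be purely computational --- pinning down the closed form for $c_3=m(a,b,c)$, with its mixed $ab$, $ac$, $bc$, and $abc$ terms --- while the one genuinely structural point to verify with care is the freeness of the $\Gamma$-action, since it is this that simultaneously delivers injectivity of the parametrization and the full dimension $8n-3$ of the quotient.
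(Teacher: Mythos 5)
Your proposal is correct and follows essentially the same route as the paper, whose proof of this theorem is just the discussion preceding it: the Whitney-formula computation of $n$ and $m$ from (\ref{newclass}), the dimension count $\dim\Hom - \dim\Aut(G_{(a,b,c)}) - (a+b+c+2)^2 + 1 = 8n-3$, stability via $h^0(F)=0$, the vanishing $\ext^2(F,F)=0$ from applying $\Hom(\cdot,F(k))$ to the resolution, and the match with $\dim\ext^1(F,F)=8c_2(F)-3$ from \cite[Prop.~3.4]{H-r}. You supply two details the paper leaves implicit --- the freeness of the $\Gamma$-action (via $\Hom(L_0,L_1)=0$, since all twists in $G_{(a,b,c)}$ are strictly negative) and the injectivity of the parametrization --- but these only make explicit what the paper's dimension count already presupposes.
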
 

Two particular cases deserve special attention, as they were previously considered by Chang in \cite{Chang}. First, we set $a=b=0$ and $c=2$, so that $n=2$ and $m=4$ and (\ref{newclass}) reducing to
\begin{equation}\label{n=2 m=4}
0 \to 2\cdot\op3(-1) \stackrel{\alpha}{\longrightarrow} 4\cdot\op3 \to F(1) \to 0 .
\end{equation}
It is shown in \cite[Lemma 2.9]{Chang} that every stable rank 2 reflexive sheaf $F$ with $c_2(F)=2$ and $c_3(F)=4$ admits a resolution of the form (\ref{n=2 m=4}); in other words, $\cals(0,0,2)=\calr(0;2;4)$.

The second case considered by Chang corresponds to $a=c=0$ and $b=1$, so that $n=3$ and $m=8$ and (\ref{newclass}) reducing to
\begin{equation}\label{n=3 m=8}
0 \to \op3(-2) \stackrel{\alpha}{\longrightarrow} 3\cdot\op3 \to F(1) \to 0 .
\end{equation}
One can check that every stable rank 2 reflexive sheaf $F$ with $c_2(F)=3$ and $c_3(F)=8$ admits a resolution of the form (\ref{n=3 m=8}), cf. \cite[proof of Theorem 3.9]{Chang}; in other words, $\cals(0,1,0)=\calr(0;3;8)$.

Finally, we are ready to establish the result promised in the beginning of the section.

\begin{theorem}\label{ein 0d}
Let $\zeta_n$ denote the number of irreducible components of $\calm(n)$ whose generic points correspond to sheaves with 0-dimensional singularities. Then \linebreak $\lim\sup_{n\to\infty}\zeta_n=\infty$.
\end{theorem}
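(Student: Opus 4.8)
The plan is to manufacture the required components by feeding the reflexive components of Theorem \ref{thm newclass} into the construction of Theorem \ref{0d comp's}, and then to count. Each triple $(a,b,c)$ of nonnegative integers with $s:=3a+2b+c$ positive and even yields, via the construction (\ref{newclass}) and Theorem \ref{thm newclass}, a nonsingular component $\cals(a,b,c)$ of $\calr(0;n;m)$ of expected dimension $8n-3$ on which $\ext^2(F,F)=0$, where $n=n(a,b,c)$ and $m=m(a,b,c)$ are the explicit polynomials displayed there. Applying Theorem \ref{0d comp's} with $\calf=\cals(a,b,c)$ and $l=m/2$ produces an irreducible component $\overline{\calt(n,l)}$ of $\calm(n)$ whose generic point corresponds to a sheaf with $0$-dimensional singularities. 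It therefore suffices to exhibit a sequence $n_1<n_2<\cdots\to\infty$ along which the number of pairwise distinct components of this form tends to infinity.

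For distinctness I would use the discrete invariant $l=c_3(E^{\vee\vee})/2$. If $\overline{\calt(n,l_1)}=\overline{\calt(n,l_2)}$, then the two components share a generic point $[E]$, whence $l_1=c_3(E^{\vee\vee})/2=l_2$; thus components with distinct values of $l$ are distinct. Hence it is enough to produce, for a suitable $n$, many triples sharing this $n$ yet realizing many distinct values of $m=2l$.

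The counting is a pigeonhole argument. Fix $k$ and restrict to triples with $s=3a+2b+c=2k$; then $s$ is positive and even automatically, so every such triple is admissible. Specializing the formula of Theorem \ref{thm newclass} gives $n=k^2+3k-(b+c)$, so for fixed $k$ the value of $n$ depends only on $b+c\in\{0,1,\dots,2k\}$ and takes at most $2k+1$ values, whereas the number of triples of nonnegative integers with $3a+2b+c=2k$ is of order $k^2$. By pigeonhole some value $n_0=n_0(k)$ is attained by a number of triples that grows linearly in $k$; all of them share the same $\sigma:=b+c=k^2+3k-n_0$, so along this family $c=\sigma-b$ and $a=(2k-\sigma-b)/3$ are affine functions of $b$, and $m(a,b,c)$ becomes a cubic polynomial in the single variable $b$. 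Substituting the slopes $(da/db,db/db,dc/db)=(-\tfrac13,1,-1)$ into the degree-$3$ part of $m$ shows that the coefficient of $b^3$ equals $2$; in particular $m$ is non-constant in $b$ and assumes each value at most three times. Consequently the number of distinct values of $m$, hence of $l$, realized by this family still grows linearly in $k$, and by the previous paragraph each distinct value of $l$ gives a distinct component of $\calm(n_0)$. Thus $\zeta_{n_0(k)}$ grows linearly in $k$; letting $k\to\infty$ gives $n_0(k)\to\infty$ with $\zeta_{n_0(k)}\to\infty$, which is exactly $\limsup_{n\to\infty}\zeta_n=\infty$.

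The routine ingredients — the quadratic count of lattice points on the plane $3a+2b+c=2k$ in the positive octant and the linear range of $b+c$ — I would not dwell on. The step deserving genuine care, and the one I expect to be the main obstacle, is the separation of the components: one must exhibit an invariant that is constant on each $\overline{\calt(n,l)}$ yet varies within the chosen family. Using $l=c_3(E^{\vee\vee})/2$ reduces this to the non-vanishing of the leading coefficient of $m$ along the family, which is the computation flagged above; alternatively, one can observe that the resolution (\ref{newclass}) is genuinely minimal (all its syzygy twists being strictly negative), so that the triple $(a,b,c)$ is itself a deformation invariant of the generic reflexive hull and already separates the components. Either route closes the argument, the former being the more self-contained.
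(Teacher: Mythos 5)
Your proposal is correct and takes essentially the same route as the paper: both feed the reflexive components of Theorem \ref{thm newclass} into Theorem \ref{0d comp's} and produce, for a suitable value of $n$, many triples $(a,b,c)$ sharing that $n$ but realizing distinct values of $m=2l$, which separates the resulting components of $\calm(n)$ via $c_3(E^{\vee\vee})$ (equivalently via the dimension $8n-3+4l$). The only difference is that the paper writes down an explicit family ($a_{q,i}=i$, $b_{q,i}=3q-3i-3$, $c_{q,i}=3i+2$, giving $n_q=9q^2-6q-1$ for $0\le i\le q-1$) where you argue by pigeonhole and a leading-coefficient computation; your explicit check that the values of $m$ along the family are pairwise distinct up to bounded multiplicity is a point the paper's proof passes over in silence, so your version is if anything slightly more careful.
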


\begin{proof}
For any integer $q\ge1$ set $n_q=9q^2-6q-1$ and for any integer $i,\ 0\le i\le q-1,$ set $a_{q,i}=i,\ b_{q,i}=3q-3i-3,\ c_{q,i}=3i+2$. Then, according to Theorem \ref{thm newclass}, the sheaf $F$ defined by (\ref{newclass}) for the triple of integers $(a,b,c)=(a_{q,i},b_{q,i},c_{q,i})$ belongs to an irreducible component $\cals_{q,i}=\cals(a_{q,i},b_{q,i},c_{q,i})$ of $\calr(0;n_q,m_{q,i})$, where $m_{q,i}=m(a_{q,i},b_{q,i},c_{q,i})$ is an even integer given by the second formula of Theorem \ref{thm newclass}. Now by Theorem \ref{0d comp's}, to each $\cals_{q,i}$ there corresponds an irreducible component  $\overline{\calt(n_q,\frac{m_{q,i}}{2})}$ of dimension $8n_q-3+2m_{q,i}$ in $\calm(n_q)$ whose generic point
is a sheaf with 0-dimensional singularities. Since
$0\le i\le q-1$, we therefore obtain $q$ distinct irreducible components of $\calm(n_q)$ with this property. In other words, in the notation of this theorem, $\zeta_{n_q}\ge q$. Hence, $\lim\sup_{n\to\infty}\zeta_n=\infty$.

\end{proof}

\section{Components of sheaves with 1-dimensional singularities} \label{1dsing}

Let $E$ be a rank 2 torsion free sheaf with 1-dimensional singularities, that is, the quotient sheaf $Q_E=E^{\vee\vee}/E$ has pure dimension one. Given any coherent sheaf $G$ on $\p3$, one has $\inext^3(E,G)=0$ and $\inext^2(E,G)\simeq \inext^3(Q_E,G)=0$ due to the reflexivity of $E^{\vee\vee}$. Therefore, torsion free sheaves $E$ with 1-dimensional singularities have homological dimension equal to 1; in other words, $E$ admits a locally free resolution of the form
\begin{equation}\label{E res dim 1}
0 \to L_1 \to L_0 \to E \to 0. 
\end{equation}

\begin{lemma}\label{chi ext tf lemma 2}
If $E$ is a rank 2 torsion free sheaf on $\p3$ with $c_1(E)=0$, and with 1-dimensional singularities,  then
$$ \chi(\inhom(E,E)) - \chi(\inext^1(E,E)) = \sum_{j=0}^3 (-1)^j\dim\ext^j(E,E) = - 8c_2(E) + 4. $$
\end{lemma}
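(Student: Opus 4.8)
The plan is to prove the two asserted equalities by separate arguments. For the first, $\chi(\inhom(E,E)) - \chi(\inext^1(E,E)) = \sum_j(-1)^j\dim\ext^j(E,E)$, I would run the local-to-global spectral sequence $E_2^{p,q} = H^p(\inext^q(E,E)) \Rightarrow \ext^{p+q}(E,E)$, exactly as in the derivation of (\ref{chi ext tf 1}), but now exploiting that a sheaf with $1$-dimensional singularities has homological dimension $1$. Indeed, as noted just before the lemma, $\inext^2(E,E)$ and $\inext^3(E,E)$ vanish, so the only nonzero rows of the $E_2$ page are $q=0$ and $q=1$. Since the Euler characteristic of the abutment of a bounded spectral sequence equals the alternating sum $\sum_{p,q}(-1)^{p+q}\dim E_2^{p,q}$, this immediately gives
$$ \sum_{j=0}^3 (-1)^j\dim\ext^j(E,E) = \sum_q (-1)^q \chi(\inext^q(E,E)) = \chi(\inhom(E,E)) - \chi(\inext^1(E,E)), $$
which needs no stability or $c_3$ hypothesis and is cleaner than the $0$-dimensional case, where the nonvanishing $\inext^2(E,E)$ forced the extra term in (\ref{chi ext tf 1}).

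For the second equality I would invoke Hirzebruch--Riemann--Roch for the Euler pairing, $\sum_j(-1)^j\dim\ext^j(E,E) = \int_{\p3} \operatorname{ch}(E)^\vee\,\operatorname{ch}(E)\,\operatorname{td}(\p3)$, valid for arbitrary coherent sheaves since the pairing factors through $K(\p3)$. Writing the Chern character of a rank $2$ sheaf with $c_1=0$ as $\operatorname{ch}(E)=(2,0,-c_2,\tfrac12 c_3)$, the key observation is that the degree-$1$ and degree-$3$ components of the self-dual product $\operatorname{ch}(E)^\vee\operatorname{ch}(E)$ vanish identically, because for any $E$ the odd cross terms cancel pairwise. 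Hence $\operatorname{ch}(E)^\vee\operatorname{ch}(E) = (4, 0, -4c_2, 0)$, and pairing its degree-$0$ and degree-$2$ parts against the degree-$3$ and degree-$1$ parts of $\operatorname{td}(\p3)=1+2H+\tfrac{11}{6}H^2+H^3$ yields $4\cdot 1 + (-4c_2)\cdot 2 = -8c_2+4$ upon integration. This reproves the formula and, incidentally, explains why it holds with $c_1=0$ as the only hypothesis, independently of $c_3$ and of semistability.

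The arithmetic here is routine, so the point worth flagging is structural rather than computational. Unlike in Lemma \ref{chi ext tf lemma}, one cannot reduce cleanly to the reflexive sheaf $E^{\vee\vee}$, because in the $1$-dimensional case $c_2(E^{\vee\vee}) = m < n$, so $\chi(E^{\vee\vee},E^{\vee\vee}) = -8m+4 \neq -8n+4$. One could still run the reflexive reduction using bilinearity of the Euler pairing and the fundamental sequence (\ref{ddsqc}), via $\chi(E,E) = \chi(E^{\vee\vee},E^{\vee\vee}) - \chi(E^{\vee\vee},Q_E) - \chi(Q_E,E^{\vee\vee}) + \chi(Q_E,Q_E)$, and then check that the three correction terms account for exactly $-8(n-m)$; but this requires computing the mixed Euler characteristics involving $Q_E$ and is strictly more work than the direct Riemann--Roch evaluation. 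For this reason I would take the Riemann--Roch route for the second equality and reserve the spectral sequence for the first.
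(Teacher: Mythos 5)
Your proposal is correct and follows essentially the same route as the paper: the first equality comes from the two-row local-to-global Ext spectral sequence (the paper writes out the $E_3=E_\infty$ terms explicitly and calls the equality immediate, while you invoke the Euler-characteristic identity for the $E_2$ page, which amounts to the same thing), and the second equality is a Riemann--Roch computation, which the paper delegates to the proof of \cite[Prop.~3.4]{H-r} (itself a resolution-plus-Riemann--Roch argument using homological dimension $1$) and which you carry out explicitly via the Euler pairing $\int_{\p3}\operatorname{ch}(E)^{\vee}\operatorname{ch}(E)\operatorname{td}(\p3)$. Your arithmetic and the bilinearity justification both check out, so the difference is purely one of presentation.
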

\begin{proof}
In this case, the spectral sequence of local to global Ext's converges in the third page, and it yields
\begin{itemize}
\item[(i)] $\ext^1(E,E) = H^1(\inhom(E,E))\oplus\ker d^{01}_2$;
\item[(ii)] $\ext^2(E,E) = \coker d^{01}_2 \oplus \ker d^{11}_2$;
\item[(iii)] $\ext^3(E,E) = \coker d^{11}_2$
\end{itemize}
where $d^{01}_2$ and $d^{11}_2$ are the spectral sequence maps
\begin{equation} \label{d012-dim1}
d^{01}_2 : H^0(\inext^1(E,E)) \to H^2(\inhom(E,E)) ~~{\rm and}
\end{equation}
\begin{equation} \label{d112}
d^{11}_2 : H^1(\inext^1(E,E)) \to H^3(\inhom(E,E)) .
\end{equation}
The first equality is then an immediate consequence. 

As for the last equality, the same proof of \cite[Prop. 3.4]{H-r} applies here, since $E$ has homological dimension $1$.
\end{proof}

\begin{remark} \rm
We observe that the proof of the first equality does not depend on the hypotheses $\rk(E)=2$ and $c_1(E)=0$, being valid for any torsion free sheaf with 1-dimensional singularities.
\end{remark}

Recall that a rank 2 \emph{instanton sheaf} on $\p3$ is a rank 2 torsion free sheaf $E$ with $c_1(E)=0$ such that
$$ h^0(E(-1))=h^1(E(-2))=h^2(E(-2))=h^3(E(-3))=0. $$ 
These are precisely the sheaves obtained as cohomology of linear monads of the form (cf. \cite{J-i})
$$ 0 \to n\cdot \op3(-1) \stackrel{\alpha}{\longrightarrow} (2n+2)\cdot\op3 \stackrel{\beta}{\longrightarrow} n\cdot\op3(1) \to 0 . $$
The second Chern class of $E$ is called the \emph{charge} of $E$. An \emph{instanton bundle} is simply a locally free instanton sheaf. Let $\cali(n)$ denote the moduli space of instanton bundles of charge $n$; since every instanton bundle is $\mu$-stable, $\cali(n)$ can be regarded as an open subset of $\calm(n)$. Moreover, for each $n\ge1$, $\cali(n)$ is an irreducible \cite{T1,T2}, nonsingular \cite{JV}, affine \cite{CO} variety of dimension $8n-3$. The trivial sheaf $2\cdot\op3$ is considered the instanton bundle of charge 0; with this in mind, $\cali(0)$ consists of a single point. $\cali(n)$ is known to be rational for $n\le3$.

For $n>0$ denote by $\call(n)$ the union of those irreducible components of $\calm(n)$ whose generic points $F$ satisfy the condition 
$$ h^1(F(-2))=h^2(F(-2))=0. $$
We call $\call(n)$ the \textit{instanton stratum of} $\calm(n)$.

In this section we study the sheaves from $\calm(n)$ with 1-dimensional singularities obtained from the instanton bundles of charge $n-d$ by elementary transformations, in the sense of \cite[Section 3]{JMT}, along complete intersection curves of degree $d$ in $\p3$.

Let $d_1\le d_2$ be positive integers, and for $i=1,2$ let $S_{d_i}$ be a surface of degree $d_i$ in $\p3$. If the scheme $C_{d_1,d_2}:=S_{d_1}\cap S_{d_2}$ has pure dimension 1, we call it a \textit{complete intersection curve}. The degree, the arithmetic genus and the Hilbert polynomial of the curve $C=C_{d_1,d_2}$ are given by the formulas
\begin{equation}\label{d,pa,H}
\begin{split}
&d:=\deg C=d_1d_2,\ \ \ p_a(C)=1+\frac{d_1d_2(d_1+d_2-4)}{2},\\ 
&H(n)=\frac{d_1d_2(2n+4-d_1-d_2)}{2}.
\end{split}
\end{equation}
Let $\mathrm{Hilb}_{d_1,d_2}$ be an open subset of the Hilbert scheme $\mathrm{Hilb}_{H(t)}$ consisting of
reduced complete intersection curves $C_{d_1,d_2}$. This is a smooth irreducible scheme of dimension
\begin{equation}\label{dim H}
\begin{split}
\dim\mathrm{Hilb}_{d_1,d_2} & = 2\binom{d_1+3}{3}-4=h^0(N_{C/\p3}),\ \ \ if \ \ \ d_1=d_2, \\
\dim\mathrm{Hilb}_{d_1,d_2} & = \binom{d_1+3}{3}+\binom{d_2+3}{3}-\binom{d_2-d_1+3}{3}-2= \\
&=h^0(N_{C/\p3}),\ \ \ if\ \ \ d_1 < d_2,
\end{split}
\end{equation}
where $C\in\mathrm{Hilb}_{d_1,d_2}$. Besides, the $h^1$-cohomology of the sheaf $N_{C/\p3}$ is given by
\begin{equation}\label{hiN}
\begin{split}
&h^1(N_{C/\p3})=
2\binom{d_1-1}{3}+1,\ \ \ if\ \ \ d_1=d_2, \\
&h^1(N_{C/\p3})=
\binom{d_1-1}{3}+\binom{d_2-1}{3}-\binom{d_2-d_1-1}{3},
\ \ \ if\ \ \ d_1 < d_2,\\
&h^0(N_{C/\p3})-h^1(N_{C/\p3})=4d_1d_2.
\end{split}
\end{equation}

Let $\calh$ be an open dense subset of $\mathrm{Hilb}_{d_1,d_2}$ defined as
\begin{equation}\label{calh}
\begin{split}
&\calh=\calh_{d_1,d_2}:=
\{C\in\mathrm{Hilb}_{d_1,d_2}\ |\ C\ \textit{is a reduced curve with} \\
&\textit{at most ordinary singularities=simple double points}\}.
\end{split}
\end{equation}
Note that $\calh$ contains a dense open subset
\begin{equation}
\calh_s:=\{C\in\calh\ |\ C\ \textit{is a smooth irreducible curve}\}.
\end{equation}
Let 
\begin{equation}\label{univ family}
\mathcal{Z}\hookrightarrow\mathcal{H}\times\p3
\end{equation}
be the universal family of curves over $\mathcal{H}$. For any $C\in\calh$ denote $g:=p_a(C)$ and let 
\begin{equation} \label{hilb poly C}
P = P(n) = d_1d_2 \cdot n
\end{equation}
be the Hilbert polynomial of $C$ with respect to the sheaf $\calo_{\p3}(1)$. 

Consider the relative Jacobian functor $\mathbf{J}=\mathbf{J}^P:(Schemes/\calh)^o\to(Sets)$ defined as
$$
\mathbf{J}(T)=\{\textrm{invertible sheaves}\ F\ \textrm{on}\ \calz\times_{\calh}T\ \textrm{with fibrewise Hilbert polynomial}\ P\}/{\sim},
$$
where $F_1\sim F_2$ if there exists an invertible sheaf $N$ on $T$ such that $F_1\simeq F_2\otimes p^*N$,
for $p:\calz\times_{\calh}T\to T$ the projection.
Let $\mathbf{P}$ be the \'etale sheaf associated to $\mathbf{J}$. It is known (see \cite{A}, \cite[0.2]
{Esteves}) that $\mathbf{P}$ is represented by an algebraic space $\mathbb{P}$, locally of finite type over $\calh$. Furthermore, according to \cite[Theorem B]{Esteves} there exists an \'etale base change
\begin{equation}\label{sigma}
\sigma:\ \widetilde{\calh}\to\calh
\end{equation}
such that the functor $\widehat{\mathbf{J}}=\mathbf{J}\times_{\calh}\widetilde{\calh}$ is represented by a $\widetilde{\calh}$-scheme 
$$
\widehat{\mathbb{J}}\overset{\hat{\pi}}{\to}
\widetilde{\calh}
$$ 
together with the universal (Poincar\'e) line bundle
\begin{equation}\label{hat L}
\widehat{\mathbb{L}}\ \ \mathrm{on}\ \ 
\widehat{\mathbb{J}}\times_{\widetilde{\calh}}{\widetilde{\calz}},
\end{equation}
where $\widetilde{\calz}:=\calz\times_{\calh}{\widetilde{\calh}}$.
Consider an open subfunctor $\mathbf{J}^{ss}$ of $\mathbf{J}$ defined as
$$
\mathbf{J}^{ss}(T)=\{(F\textrm{mod}\sim)\in\mathbf{J}(T)
\ |\ F \ \textrm{is fibrewise}\ \calo_{\p3}(1)|_C\mbox{-semistable}\}
$$
The functor $\widetilde{\mathbf{J}}=\mathbf{J}^{ss}\times_{\calh}
\widetilde{\calh}$ is represented by a $\widetilde{\calh}$-scheme 
\begin{equation}
\widetilde{\mathbb{J}}\overset{\tilde{\pi}}{\to}
\widetilde{\calh}
\end{equation}
of finite type over $\widetilde{\calh}$, which is an open subscheme of $\widehat{\mathbb{J}}$ endowed with the universal (Poincar\'e) line bundle 
\begin{equation}\label{tilde L}
\widetilde{\mathbb{L}}=\widehat{\mathbb{L}}|_
{\widetilde{\mathbb{J}}\times_{\widetilde{\calh}}{\widetilde{\calz}}},
\end{equation}

On the other hand, $\mathbf{J}^{ss}$ is an open subfunctor of the moduli functor 
$\mathbf{M}=\mathbf{M}^P:(Schemes/\calh)^o\to(Sets)$,
$$
\mathbf{M}(T)=\{T\mbox{-flat sheaves}\ F\ \textrm{on}\
\ \calz\times_{\calh}T\ \textrm{with}\ \calo_{\p3}(1)|_C\mbox{-semistable}
$$
$$
\ \ \ \ \ \ \ \ \textrm{fibres over}\ T\ \textrm{having fibrewise Hilbert polynomial}\ P\}/{\sim},
$$
where by \cite{Simpson} (see also \cite[Section 4]{HL}) 
$\mathbf{M}$ is corepresented by a projective $\calh$-scheme
\begin{equation}
\mathbb{M}=\mathbb{M}^P_{\calz/\calh}
\overset{\pi}{\to}\calh;
\end{equation}\label{mathbb M}
respectively, $\mathbf{J}^{ss}$ is corepresented by a
quasi-projective $\calh$-scheme
\begin{equation}\label{mathbb M'}
\mathbb{M}'=\mathbb{M'}^P_{\calz/\calh}\overset{\pi'}
{\to}\calh
\end{equation}
which is an open subscheme of $\mathbb{M}$ and $\pi'=
\pi|_{\mathbb{M}'}$. 
Note that set-theoretically the schemes $\widetilde{\mathbb{J}}$ and $\mathbb{M}'$ are described as
\begin{equation}\label{set tilde J}
\begin{split}
&\widetilde{\mathbb{J}}=\{(C,w,[L])\ |\ C\in\calh,\ w\in\sigma^{-1}(C),\ L\ \textrm{is an invertible}\ 
 \\
&\calo_{\p3}(1)|_C\mbox{-semistable sheaf on}\ C\ 
\textrm{with Hilbert polynomial}\ P\},
\end{split}
\end{equation}
\begin{equation}\label{set M'}
\begin{split}
&\mathbb{M}'=\{(C,[L]_S)\ |\ C\in\calh,\ \ L\ \textrm{is an invertible}\ \calo_{\p3}(1)|_C\mbox{-semistable}\\
& \textrm{sheaf on}\ C\ \textrm{with Hilbert polynomial}\ P\},
\end{split}
\end{equation}
where $[L]_S$ denotes $S$-equivalence class of 
$L$ with respect to $\calo_{\p3}(1)|_C$.
Under this description, the corepresentability of
$\mathbf{J}^{ss}$ by $\mathbb{M}'$ implies that
there exists a surjective morphism of schemes 
\begin{equation}\label{varphi}
\varphi:\ \widetilde{\mathbb{J}}\to\mathbb{M}',\ \ \ (C,w,[L])\mapsto(C,[L]_S).
\end{equation}
Note that, since every invertible sheaf on a smooth (hence irreducible) curve $C\in\calh_s$ is $\calo_{\p3}(1)|_C$-stable, it follows that the functors $\mathbf{J}_s=\mathbf{J}^{ss}\times_{\calh}\calh_s$ and 
$\mathbf{M}\times_{\calh}\calh_s$ are isomorphic, and they are represented by the scheme
\begin{equation}\label{Ms}
\mathbb{M}_s=\mathbb{M}\times_{\calh}\calh_s
\overset{\pi_s}{\to}\calh_s,
\end{equation}
where $\pi_s=\pi|_{\mathbb{M}_s}$. Hence the functor 
$\widetilde{\mathbf{J}}_s=
\mathbf{J}_s\times_{\calh_s}\widetilde{\calh}_s$ is represented by the scheme 
\begin{equation}\label{tilde Js}
\widetilde{\mathbb{J}}_s:=\mathbb{M}_s\times_{\calh_s}
\widetilde{\calh}_s=\mathbb{M}\times_{\calh}
\widetilde{\calh}_s\overset{\tilde{\pi}_s}{\to}
\widetilde{\calh}_s
\end{equation}
of finite type over $\widetilde{\calh}_s$, 
which is an open subscheme of $\widetilde{\mathbb{J}}$. 
Note that, by construction, $\pi_s$ is a fibration
\begin{equation}\label{fibration pi s}
\pi_s:\ \mathbb{M}_s\to\calh_s,\ \ \ 
\pi^{-1}(C)=\mathrm{Pic}^{g-1}(C),\ \ \ C\in\calh_s,
\end{equation}
where $\mathrm{Pic}^{g-1}(C)=\{[L]\in\mathrm{Pic}(C)\ |\ \deg L=g-1\}$. This implies that $\mathbb{M}_s$ is smooth and  irreducible, since
$\calh_s$ is clearly smooth and irreducible. In addition,
\begin{equation}\label{fibration tilde pi s}
\tilde{\pi}_s:\ \widetilde{\mathbb{J}}_s\to
\widetilde{\calh}_s
\end{equation}
is also a fibration with fibre $\mathrm{Pic}^{g-1}(C)$ which is smooth since $\widetilde{\calh}_s$ is smooth as
an \'etale cover of $\calh_s$. 

Now consider the closure
\begin{equation}\label{M0}
\mathbb{M}^0:=\overline{\mathbb{M}}_s
\end{equation}
of the scheme $\mathbb{M}_s$ in $\mathbb{M}$. In the next section (see the proof of Lemma \ref{Lemma 5.1}(iv))
we will make use of the following lemma.
\begin{lemma}\label{Lemma 4.1}
$\mathbb{M}'\subset\mathbb{M}^0$. 
\end{lemma}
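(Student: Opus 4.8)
The plan is to prove directly that every closed point $(C,[L]_S)$ of $\mathbb M'$ --- where $C\in\calh$ and $L$ is an invertible $\calo_{\p3}(1)|_C$-semistable sheaf of Hilbert polynomial $P$ --- lies in the closure $\mathbb M^0=\overline{\mathbb M}_s$. Via the surjection $\varphi\colon\widetilde{\mathbb J}\to\mathbb M'$ of (\ref{varphi}) and the identity $\varphi(\widetilde{\mathbb J}_s)=\mathbb M_s$, this is equivalent to showing that $\widetilde{\mathbb J}_s$ is dense in $\widetilde{\mathbb J}$, and I would phrase the argument as a \emph{smoothing}: for each such $(C,[L]_S)$ I exhibit a one-parameter family of semistable sheaves whose general member is a line bundle on a \emph{smooth} complete intersection curve and whose special member is $L$. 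By corepresentability of $\mathbf M$ by $\mathbb M$, such a family produces a morphism from a smooth curve into $\mathbb M'$ meeting $\mathbb M_s$ on a dense open set and hitting $(C,[L]_S)$ at its center, which yields the inclusion.

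For the family I would use that $\calh$ is open in the smooth irreducible scheme $\mathrm{Hilb}_{d_1,d_2}$ and that $\calh_s\subset\calh$ is dense. Choosing a smooth affine curve $T$ with a marked point $t_0$ and a morphism $T\to\calh$ that sends $t_0\mapsto[C]$, carries $T\setminus\{t_0\}$ into $\calh_s$, and is transverse to the locus of singular curves, the pullback $\mathcal C:=\calz\times_{\calh}T\to T$ of the universal family (\ref{univ family}) has smooth general fibres $C_t$, special fibre $C$, and a \emph{regular} total surface $\mathcal C$ (transversality guarantees that $\mathcal C$ is smooth at the nodes of $C$).

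The main computation is the extension of $L$ across this family. Writing $L\cong\calo_C(D-D')$ with $D,D'$ effective divisors supported on the smooth locus of $C$, I would take the scheme-theoretic closures $\overline D,\overline{D'}$ in $\mathcal C$; since $\mathcal C$ is a regular surface these $1$-cycles are Cartier, they are finite over $T$, and they restrict on $C$ to $D$ and $D'$. Thus $\mathcal L:=\calo_{\mathcal C}(\overline D-\overline{D'})$ is a $T$-flat family of invertible sheaves with $\mathcal L|_{C}\cong L$ and $\mathcal L|_{C_t}$ a line bundle, hence $\calo_{\p3}(1)|_{C_t}$-stable, on each smooth $C_t$. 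By corepresentability this family defines a morphism $\psi\colon T\to\mathbb M'$ with $\psi(T\setminus\{t_0\})\subset\mathbb M_s$ and $\psi(t_0)=(C,[L]_S)$, so $(C,[L]_S)\in\overline{\mathbb M}_s=\mathbb M^0$, proving $\mathbb M'\subseteq\mathbb M^0$. As a sanity check, the relative Jacobian $\widehat{\mathbb J}\to\widetilde{\calh}$ is smooth (the fibres of $\widetilde{\calz}\to\widetilde{\calh}$ being curves, deformations of a line bundle are unobstructed); hence $\widehat{\mathbb J}$, and with it the open subscheme $\widetilde{\mathbb J}$, is regular, and the families just constructed show that its stratum $\widetilde{\mathbb J}_s$ over $\calh_s$ is dense.

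I expect the construction of the smoothing, rather than the final specialization, to be the main obstacle. Two points require care: first, arranging the transversality so that $\mathcal C$ is regular precisely at the nodes of $C$, which rests on the smoothability of nodal complete intersection curves inside $\mathrm{Hilb}_{d_1,d_2}$; and second, when $C$ is \emph{reducible}, ensuring that the extension $\mathcal L$ restricts on $C$ to a line bundle in the prescribed $S$-equivalence class $[L]_S$ and not merely to a twist of $L$ by fibral components --- this may force correcting $\mathcal L$ by components of the fibres and checking that the correction leaves the generic member undisturbed. By comparison, the flatness of $\mathcal L$ over $T$ and the automatic stability of line bundles on smooth irreducible curves are routine.
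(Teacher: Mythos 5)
Your strategy is viable but genuinely different from the paper's. The paper's proof is abstract: it invokes the formal smoothness of the relative (compactified) Jacobian $\mathbb{P}$ over $\calh$ (citing Esteves and Melo--Rapagnetta--Viviani), deduces that every irreducible component of $\widetilde{\mathbb{J}}$ dominates an irreducible \'etale neighbourhood $\widetilde{U}_i$ of the base and hence meets the dense open locus over $\calh_s$, and then pushes this down through the surjection $\varphi:\widetilde{\mathbb{J}}\to\mathbb{M}'$ to get $x\in\overline{\varphi(X')}\subset\overline{\mathbb{M}}_s$. You instead propose an explicit one-parameter smoothing of the pair $(C,L)$. This is more concrete, bypasses the \'etale cover and the representability machinery entirely, and would give a self-contained proof; its cost is the divisor-theoretic construction of the extension $\mathcal{L}$, which the abstract argument never has to touch.

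Two corrections to that construction. First, ``scheme-theoretic closure of $D$ in $\mathcal{C}$'' is not the right operation: $D$ is a divisor on the \emph{central fibre}, hence already closed in $\mathcal{C}$, and its closure is itself --- a zero-dimensional scheme over $t_0$, not a multisection. What you need is to \emph{extend} $D$ and $D'$ to relative effective Cartier divisors $\mathcal{D},\mathcal{D}'\subset\mathcal{C}$, finite over $T$, with $\mathcal{D}\cap C=D$ and $\mathcal{D}'\cap C=D'$. This is possible because $D,D'$ can be chosen supported in the smooth locus of $C$ (write $L\otimes\calo_{\p3}(n)|_C=\calo_C(D)$ and $\calo_{\p3}(n)|_C=\calo_C(D')$ for $n\gg0$, with general sections whose zero loci avoid the nodes), and $\mathcal{C}\to T$ is smooth along such points, so each point of $D$ lifts to a local section of $\mathcal{C}\to T$. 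Second, once this is done, the transversality and regularity of $\mathcal{C}$ at the nodes of $C$ --- which you single out as the main obstacle --- are not needed at all: $\mathcal{L}:=\calo_{\mathcal{C}}(\mathcal{D}-\mathcal{D}')$ is invertible on all of $\mathcal{C}$ regardless, is $T$-flat, restricts on $C$ to $L$ exactly (so the worry about fibral twists, which only arises when extending a sheaf from the generic fibre inward, is also moot), and restricts on each smooth fibre $C_t$ to a line bundle of degree $g-1$, which is automatically $\calo_{\p3}(1)|_{C_t}$-stable. With these repairs the argument closes as you say: corepresentability of $\mathbf{M}$ by $\mathbb{M}$ turns the family into a morphism $T\to\mathbb{M}'$ landing in $\mathbb{M}_s$ off $t_0$, whence $(C,[L]_S)\in\overline{\mathbb{M}}_s=\mathbb{M}^0$.
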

\begin{proof}
It is known (see, e.g., \cite[Section 0.2]{Esteves}, \cite[Fact 4.4]{MRV}) that the algebraic space $\mathbb{P}$ representing the functor $\mathbf{P}$ is formally smooth over $\calh$. This implies that the scheme $\widehat{\mathbb{J}}$, hence also the schemes $\widetilde{\mathbb{J}}$ and $\widetilde{\mathbb{J}}_s$,
are formally smooth over $\widetilde{\calh}$.

Take a point $x=(C,[L]_S)\in\mathbb{M}'$. By definition, $L$ is an invertible sheaf on $C$. We have to show that $x\in\mathbb{M}^0$. For this, let $\tilde{x}\in\widetilde{\mathbb{J}}$ be any point in the fibre $\varphi^{-1}(x)$ where $\varphi$ is defined in (\ref{varphi}) and let $w=\tilde{\pi}(\tilde{x})$.
Refining the \'etale base change $\sigma$, we may assume $\widetilde{\calh}=\sqcup\widetilde{U}_i$, where each $\widetilde{U}_i=\sigma^{-1}(U_i)$ is irreducible and $\cup U_i$ is an open cover of $\calh$. The point $w\in\widetilde{\calh}$ lies in some $\widetilde{U}_i$, and let $X$ be any irreducible component of $\tilde{\pi}^{-1}(U_i)$ containing $\tilde{x}$.
Since $\calh_s$ is an irreducible dense open subset of $\calh$, it follows that $U_{is}=U_i\cap\calh_s$ is a
dense open subset of $U_i$. Hence, $\widetilde{U}_{is}=\sigma^{-1}(U_{is})$ is a dense open subset.

Next, as $\tilde{\pi}:\widetilde{\mathbb{J}}\to\widetilde{\calh}$
is formally smooth, $\tilde{\pi}|_X:X\to\widetilde{U}_i$ is dominant.
Hence, $X'=X\cap\tilde{\pi}^{-1}(\widetilde{U}_{is})$ is dense open both in $X$ and $\tilde{\pi}^{-1}(\widetilde{U}_{is})$. Thus, $\varphi(X')$ is dense in $\varphi(X)$ and, by
construction (see (\ref{tilde Js})-(\ref{fibration tilde pi s})), $\varphi(X')$ lies in $\mathbb{M}_s$ 
and contains a nonempty open subset of $\mathbb{M}_s$. Since $\mathbb{M}_s$ is irreducible, 
$\mathbb{M}^0=\overline{\mathbb{M}}_s=\overline{\varphi(X')}=\overline{\varphi(X)}$, and, by
construction, $x\in\overline{\varphi(X)}$.
\end{proof}

Note that, since $\widetilde{\mathbb{J}}$ is formally smooth over $\widetilde{\calh}$, it follows that
$\widetilde{\mathbb{J}}_s$ is dense and open in $\widetilde{\mathbb{J}}$, hence (\ref{fibration tilde pi s}) implies that
\begin{equation}\label{dim J}
\dim\widetilde{\mathbb{J}}= \dim\widetilde{\mathbb{J}}_s=
1+\frac{d_1d_2(d_1+d_2-4)}{2}+\dim\calh,
\end{equation}
where $\dim\calh$ is given by (\ref{dim H}).

Take any curve $C\in\calh_s$. Then the set $U_C:=\{[L]\in\mathrm{Pic}^{g-1}(C)\ |\ h^0(L)=h^1(L)=0\}$ is dense and open in $\mathrm{Pic}^{g-1}(C)$ since it is the complement of the divisor $\Theta=\im(a)$, where 
$a:S^{g-1}C\to\mathrm{Pic}^{g-1}(C),\ D\mapsto\calo_C(D)$ is the Abel--Jacobi map. Therefore, denoting
$$
\mathbb{J}:=\{(C,[L])\in\mathbb{M}_s\ |\ h^0(L)=h^1(L)=0\},
$$
$$
\widetilde{\mathbb{J}}_0:=\{(C,w,[L])\in \widetilde{\mathbb{J}}_s\ |\ h^0(L)=h^1(L)=0\}=
\mathbb{J}\times_{\calh_s}\widetilde{\calh}_s,
$$
we obtain that $\mathbb{J}$ is a nonempty and, by semicontinuity, open subset of $\mathbb{M}_s$, which is dense and smooth as $\mathbb{M}_s$ is smooth and irreducible. Similarly, $\widetilde{\mathbb{J}}_0$ is smooth, dense and open in $\widetilde{\mathbb{J}}_s$. Note also that by (\ref{tilde L}),
${\widetilde{\mathbb{J}}_0\times_{\widetilde{\calh}}{\widetilde{\calz}}}$
carries a universal (Poincar\'e) line bundle, which is
\begin{equation}\label{tilde L0}
\widetilde{\mathbb{L}}_0=\widetilde{\mathbb{L}}|_
{\widetilde{\mathbb{J}}_0\times_{\widetilde{\calh}}{\widetilde{\calz}}}.
\end{equation}

Next, for $c\ge0$ and any point $([F],C,[L])\in\cali(c)\times\mathbb{J}$,
set 
$$
\mathbb{P}\mathrm{Hom}(F,C,L)_e:=\{\mathbf{k}\varphi\in \mathbb{P}(\mathrm{Hom}(F,L(2)))\ |\ \varphi:F\to L(2) 
\ \textrm{is an epimorphism}\}. 
$$
Recall that $\cali(0)=\{\mathrm{pt}\}$. 

\begin{lemma}\label{Lemma 4.2}
For each $c\ge0$, there is a smooth, dense, and open subset $(\cali(c)\times\mathbb{J})_e^0$
of $\cali(c)\times\mathbb{J}$ such that, for any $([F],C,[L])\in(\cali(c)\times\mathbb{J})_e^0$, 
one has: 
\begin{itemize}
\item[(i)] $h^i(L)=h^i(L^{-1}\otimes\omega_C)=0,\ i=0,1;$
\item[(ii)] $h^1(F\otimes L(2)) = h^1(F\otimes(L^{-1}\otimes\omega_C)(2))=0$;
\item[(iii)] $\mathbb{P}\mathrm{Hom}(F,C,L)_e$ is a dense open subset of
$\mathbb{P}(\mathrm{Hom}(F,L(2)))$;
\item[(iv)]
\begin{equation}\label{dim Phom}
\dim\mathbb{P}\mathrm{Hom}(F,C,L)_e=4d_1d_2-1.
\end{equation}
\item[(v)] there is a smooth, dense, and open subset $(\cali(c)\times\mathbb{J})_e$
of $\cali(c)\times\mathbb{J}$ containing $(\cali(c)\times\mathbb{J})_e^0$ and such that, for any
$([F],C,[L])\in(\cali(c)\times\mathbb{J})_e$, the statements (iii) and (iv) and the equalities
$h^0(L)=h^1(L)=h^1(F\otimes L(2))=0$ from (i) and (ii) above are true.
\end{itemize}
\end{lemma}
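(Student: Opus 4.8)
The plan is to treat (i) as essentially formal, reduce (iii)--(v) to the single cohomological input contained in (ii), and concentrate the real work on that vanishing. Throughout write $d=d_1d_2=\deg C$ and $g=p_a(C)$, and recall from the construction that $\cali(c)$ is smooth and irreducible and that $\mathbb{J}$ is a smooth, irreducible, dense open subscheme of $\mathbb{M}_s$; hence $\cali(c)\times\mathbb{J}$ is smooth and irreducible, so that \emph{any} nonempty open subset is automatically dense. Consequently it will suffice, for each asserted subset, to prove that the defining conditions are open and that they hold at a single point.

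First I would dispose of (i). Since $C\in\calh_s$ is smooth and $\deg L=g-1$, one has $\deg\omega_C=2g-2$ and $\deg(L^{-1}\otimes\omega_C)=g-1$, and Serre duality on $C$ gives $h^0(L)=h^1(L^{-1}\otimes\omega_C)$ and $h^1(L)=h^0(L^{-1}\otimes\omega_C)$. Thus the condition $h^0(L)=h^1(L)=0$ built into $\mathbb{J}$ is \emph{equivalent} to its $\omega_C$-twisted counterpart, so (i) holds on all of $\cali(c)\times\mathbb{J}$. In particular the involution $j\colon(C,[L])\mapsto(C,[L^{-1}\otimes\omega_C])$ is a well-defined automorphism of $\mathbb{J}$, and $\id\times j$ is an automorphism of $\cali(c)\times\mathbb{J}$. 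I will use this to reduce the two vanishings in (ii) to a single one: if $A:=\{\,([F],C,[L]):h^1(F\otimes L(2))=0\,\}$ is shown to be dense open, then $(\id\times j)^{-1}(A)$ is dense open as well, and (ii) holds precisely on the dense open set $A\cap(\id\times j)^{-1}(A)$, which I take to be $(\cali(c)\times\mathbb{J})^0_e$.

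Openness of $A$ follows from semicontinuity: over an \'etale cover of $\widetilde{\calh}$ where the Poincar\'e bundle $\widetilde{\mathbb{L}}_0$ and an (\'etale-local) universal instanton bundle exist, $h^1$ of the fibrewise sheaf $F\otimes L(2)$ is upper semicontinuous. By irreducibility it then suffices to exhibit a single triple with $h^1(F\otimes L(2))=0$, and this is the heart of the matter. Riemann--Roch on $C$ gives $\chi(F\otimes L(2))=4d$, and since $F^\vee\cong F$, Serre duality rewrites the desired vanishing as $h^0\bigl(F|_C(-2)\otimes N\bigr)=0$ with $N:=L^{-1}\otimes\omega_C$ generic in $\mathrm{Pic}^{g-1}(C)$. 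The bundle $V:=F|_C(-2)$ has rank $2$ and $\chi(V\otimes N)=-4d<0$, so the vanishing is the \emph{expected} one; to secure it I would (a) use a restriction theorem to see that for generic $C$ the $\mu$-stable bundle $F$ restricts to a semistable bundle $F|_C$, whence $V\otimes N$ is semistable of slope $(g-1)-2d$, and (b) conclude: when $d_1+d_2\le 7$ this slope is negative and $h^0=0$ for every $N$, while when $d_1+d_2\ge 8$ one invokes generic vanishing on $\mathrm{Pic}^{g-1}(C)$ (the theta locus of $V$ being a proper subset for generic $C$) to get $h^0=0$ for generic $N$. I expect this step, namely controlling the instability of the restriction $F|_C$ and the generic vanishing on the Jacobian, to be the main obstacle; everything else is formal, and since one only needs a single good triple there is ample freedom to choose $C$, $F$, $L$ generically.

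With (ii) in hand, the remaining items are routine. For (iii) I would identify $\Hom(F,L(2))=H^0(C,F|_C\otimes L(2))$ (again using $F^\vee\cong F$) and observe that $\varphi$ is an epimorphism exactly when the corresponding section of the rank $2$ bundle $F|_C\otimes L(2)$ vanishes nowhere; a generic section vanishes nowhere as soon as $F|_C\otimes L(2)$ is globally generated, which follows from the point-twisted vanishings $h^1\bigl(F|_C\otimes L(2)\otimes\calo_C(-p)\bigr)=0$ for all $p\in C$. These are established exactly as in (ii) (now with Euler characteristic $4d-2$), generically in the data, so the epimorphism locus is dense open, giving (iii). Then (iv) is immediate: $\dim\Hom(F,L(2))=h^0(F\otimes L(2))=\chi(F\otimes L(2))=4d_1d_2$, whence $\dim\mathbb{P}\mathrm{Hom}(F,C,L)_e=4d_1d_2-1$. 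Finally for (v) I take $(\cali(c)\times\mathbb{J})_e$ to be the dense open locus cut out only by $h^0(L)=h^1(L)=h^1(F\otimes L(2))=0$ together with (iii)--(iv), i.e. essentially $A$ itself; it contains $(\cali(c)\times\mathbb{J})^0_e=A\cap(\id\times j)^{-1}(A)$ by construction, as required.
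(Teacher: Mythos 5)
Your overall reduction (smoothness and irreducibility of $\cali(c)\times\mathbb{J}$, so that each condition only needs to be open and to hold at one point), your treatment of (i) via Serre duality, and the computation in (iv) all match the paper. The genuine problem is the step you yourself flag as the main obstacle: you never actually produce a single triple $([F],C,[L])$ with $h^1(F\otimes L(2))=0$. Your route through semistability of $F|_C$ plus generic vanishing on $\mathrm{Pic}^{g-1}(C)$ is not a routine citation. First, the restriction theorems (Mehta--Ramanathan, Flenner) are asymptotic in the degree of the hypersurface sections, so for a fixed complete intersection type $(d_1,d_2)$ --- in particular for the small values with $d_1+d_2\le 7$, where your argument relies entirely on negativity of the slope of a \emph{semistable} $V\otimes N$ --- semistability of $F|_C$ is not guaranteed. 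Second, in the range $d_1+d_2\ge 8$ the statement that a semistable rank~2 bundle $V$ on $C$ with $\chi(V\otimes N)<0$ satisfies $h^0(V\otimes N)=0$ for generic $N$ is exactly the existence of a theta divisor for $V$, and Raynaud's examples show that such generic vanishing can fail for semistable (even stable) bundles; it cannot simply be invoked. The same unproved vanishing is then reused in your argument for (iii) (the twists by $\calo_C(-p)$ needed for global generation), so (iii) inherits the gap.

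The paper closes this gap by a completely different and much more elementary device: for $c>0$ it chooses $F$ to be a 't~Hooft bundle, sitting in $0\to\op3(-1)\to F\to\cali_Y(1)\to 0$ with $Y$ a union of disjoint lines avoiding $C$. Tensoring by $L(2)$ gives $0\to L(1)\to F\otimes L(2)\to L(3)\to 0$, and $h^1(L(i))=0$ for $i=1,3$ follows directly from $h^1(L)=0$, yielding the vanishing in (ii) at that point without any restriction theorem or theta-divisor input. The same filtration gives base-point-freeness of $|L(1)|$ and $|L(3)|$ and hence an explicit nowhere-vanishing section of $F\otimes L(2)$, which settles (iii). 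If you want to keep your framework, you should replace your step (ii) by this (or some equally explicit) construction of one good triple; as written, the proof is incomplete precisely at its load-bearing point.
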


\begin{proof}
Take a point $(C,[L])\in\mathbb{J}_0\cap\mathbb{J}_s$, so that
\begin{equation}\label{h1}
h^i(L)=h^i(L^{-1}\otimes\omega_C)=0,\ i=0,1.
\end{equation}

We first consider the case $c=0$, so that $F\otimes L(2) \simeq 2\cdot L(2)$ and 
$\mathrm{Hom}(F,L(2)) \simeq H^0(2\cdot L(2))$. Items (ii), (iii) and (v) follow immediately. As for item (iv), just note that $\chi(L(k))=d_1d_2\cdot k$ (since $\chi(L)=0$ and $\deg C=d_1d_2$), thus $h^0(L(2))=\chi(L(2))=2d_1d_2$. 

Next, let $c>0$; take a ${}^{,}$t Hooft bundle $[F]\in\cali(c)$, i. e. a bundle fitting in an exact triple
$$ 0\to\calo_{\p3}(-1)\to F\to\cali_Y(1)\to0~,$$
where $Y$ is a union of $c+1$ disjoint lines in $\p3$. Choose $Y$ in such a way that $Y\cap C=\emptyset$. Then tensoring the above triple with $L(2)$, we obtain exact triples 
\begin{equation}\label{L's}
\begin{split}
& 0\to L(1)\to F \otimes L(2)\to L(3)\to0,\\
& 0\to(L^{-1}\otimes\omega_C)(1)\to F\otimes(L^{-1}\otimes\omega_C)(2)\to(L^{-1}\otimes\omega_C)(3)\to0.
\end{split}
\end{equation}
The equalities (\ref{h1}) imply
\begin{equation}\label{h1,h1}
h^1(L(1))=h^1(L(3))=0,\ \ \ 
h^1((L^{-1}\otimes\omega_C)(1))=h^1((L^{-1}\otimes
\omega_C)(3))=0,
\end{equation}
so that (\ref{L's}) yields
$$
h^1(F\otimes L(2))=0,\ \ \ 
h^1(F\otimes(L^{-1}\otimes\omega_C)(2))=0
$$ 
for the above chosen point
$([F],C,[L])\in\cali(c)\times\mathbb{J}$. Since, by
semicontinuity, the vanishing of $h^1(F\otimes L(2))$ and 
$h^1(F\otimes(L^{-1}\otimes\omega_C)(2))$ 
is an open condition on 
$([F],C,[L])\in \cali(c)\times\mathbb{J}$, 
it follows that the set
\begin{equation}\label{IJ'}
\begin{split}
& (\cali(c)\times\mathbb{J})'=
\{([F],C,[L])\in\cali(c)\times\mathbb{J}\ |\ h^i(L)=h^i(L^{-1}\otimes\omega_C)=0,\ \\ 
& \ h^1(F\otimes L(2))=h^1(F\otimes(L^{-1}
\otimes\omega_C)(2))=0,\ i=0,1,\}
\end{split}
\end{equation}
is a nonempty (hence dense) open subset of $\cali(c)\times\mathbb{J}$.
Next, from (\ref{h1}) and (\ref{L's}) we obtain the exact
sequence  
\begin{equation}\label{h0(L's)}
0\to H^0(L(1))\to H^0(F\otimes L(2))\overset{\varepsilon}{\to} H^0(L(3))\to0.
\end{equation}
Since the sheaves $\calo_{\p3}(1)$ and $\calo_{\p3}(3)$ are very ample, it follows from (\ref{h1}) that the linear series $|L(1)|$ and $|L(3)|$ on $C$ have no fixed
points. This implies that there exist such sections
$s_i\in H^0(L(i)),\ i=1,3,$ that
$$
(s_1)_0\cap (s_3)_0=\emptyset.
$$
Take any section $s'\in\varepsilon^{-1}(s_3)$, where $\varepsilon$ is the epimorphism in (\ref{h0(L's)}). Then the last equality implies that the section $s:=s'+s_1\in H^0(F\otimes L(2))$ has no zeroes. Hence its transpose
$\varphi={}^{\sharp}s:F\simeq F^{\vee}\to L(2)$
is an epimorphism, i. e. 
\begin{equation}\label{not empty}
\mathbb{P}\mathrm{Hom}(F,C,L)_e\ne\emptyset.
\end{equation}
Since $\mathbb{P}\mathrm{Hom}(F,C,L)_e$ is an open subset of
the irreducible space $\mathbb{P}(\mathrm{Hom}(F,L(2)))$, it is dense in $\mathbb{P}(\mathrm{Hom}(F,L(2)))$.
Moreover, (\ref{not empty}) is an open condition on the
point $([F],C,[L])$ in $(\cali(c)\times\mathbb{J})'$.
Thus in view of (\ref{IJ'}) there exists a dense open subset
$(\cali(c)\times\mathbb{J})_e$ of $(\cali(c)\times\mathbb{J})'$ (hence of
$\cali(c)\times\mathbb{J}$) for which the statements (i)
- (iii) of Lemma hold. Besides, the smoothness of 
$(\cali(c)\times\mathbb{J})_e$ follows from that of $\cali(c)$ (see \cite{JV}) and of
$\mathbb{J}$.

Next, since $F\simeq F^{\vee}$, we have
$$ \dim\mathbb{P}\mathrm{Hom}(F,C,L)_e= \dim(\mathrm{Hom}(F,L(2)))-1=h^0(F\otimes L(2))-1 $$
Note that 
$$ h^0(F\otimes L(2))=h^0(L(1))+h^0(L(3))=\chi(L(1))+\chi(L(3))=4d_1d_2 ~,$$
where the first equality follows from the exact sequence (\ref{h1,h1}), while the second equality follows from
(\ref{h0(L's)}). Putting the last two equations together, we obtain (\ref{dim Phom}). 

At last, the statement (v) is clear by semicontinuity.
\end{proof}

In particular, note that $(\cali(0)\times\mathbb{J})_e=\mathbb{J}$.

Next, using Lemma \ref{Lemma 4.2} consider, for each $c\ge1$ and $d_2\ge d_1\ge1$, the set
\begin{equation}\label{W}
\widetilde{W}(d_1,d_2,c):=\{([F],C,[L],\mathbf{k}\varphi)\ |\ ([F],C,[L])\in(\cali(c)\times\mathbb{J})_e,\ 
\mathbf{k}\varphi\in\mathbb{P}\mathrm{Hom}(F,C,L)_e\}
\end{equation}
and the surjective projection
\begin{equation}\label{pi}
\pi:\ \widetilde{W}(d_1,d_2,c)\to
(\cali(c)\times\mathbb{J})_e,\ \
([F],C,[L],\mathbf{k}\varphi)\mapsto([F],C,[L])
\end{equation}
with fibre
\begin{equation}\label{fibre of pi}
\pi^{-1}([F],C,[L])=\mathbb{P}\mathrm{Hom}(F,C,L)_e
\overset{\textrm{open}}{\hookrightarrow}\mathbb{P}(\mathrm{Hom}
(F,L(2))).
\end{equation}

When $c=0$, one must also quotient out by the action of $GL(2)$ on the trivial sheaf $2\cdot\op3(2)$ in order to obtain a family of isomorphism classes of torsion free sheaves. Therefore, we define:
\begin{equation}\label{W(0)}
\widetilde{W}(d_1,d_2,0):=\{(C,[L],\mathbf{k}\varphi)\ |\ (C,[L])\in\mathbb{J}^0,\ 
\mathbf{k}\varphi\in\mathbb{P}\Hom(2\cdot\op3,C,L)_e/\mathbb{P}GL(2)\} .
\end{equation}

Also denote
\begin{equation}\label{W0}
\widetilde{W}(d_1,d_2,c)^0:=\pi^{-1}((\cali(c)\times\mathbb{J})_e^0).
\end{equation}

\begin{remark}\label{Remark 4.2} \rm
Note that $\widetilde{W}(d_1,d_2,c)$ with $c\ge1$ is a dense open subset of a Severi--Brauer variety fibered over $\mathcal{B}:=(\cali(c)\times\mathbb{J})_e$ with fibers, given by ${\mathbb{P}}^{4d_1d_2-1}$ via the projection $\pi$.

Indeed, let $\widetilde{\mathbf{V}}:=\mathcal{B}\times_{\mathcal{H}}\widetilde{\mathcal{H}}\to\mathcal{B}$ be an \'etale covering induced by (\ref{sigma}). According to \cite[Section 4]{HL}, over $\cali(c)$ there exists (locally in the \'etale topology) a universal rank-2 vector bundle. This means that there exists an open \'etale covering $\Phi:W\to \cali(c)$ and a rank 2 vector bundle $\mathbf{E}$ over $\p3\times W$ such that, for any $w\in W$, $\mathbf{E}|_{\p3\times w}\simeq E_t$, where $t=\Phi(w)\in\cali(c)$ and $E_t$ denotes the instanton bundle whose isomorphism class is represented by $t$. Let $\widetilde{\mathbf{U}}:=W\times_{\cali(c)}
\widetilde{\mathbf{V}}$, and let $\widetilde{\mathbf{\Gamma}}:=\widetilde{\mathbf{U}}\times_{\widetilde{\mathcal{H}}}\widetilde{\mathcal{Z}}$; let 
$\boldsymbol{\iota}:\widetilde{\mathbf{\Gamma}}\hookrightarrow\widetilde{\mathbf{U}}\times\p3$
be the lift into $\widetilde{\mathbf{U}}\times\p3$ of the universal family of curves $\mathcal{Z}$. Let also
$\mathbf{E}_{\widetilde{\mathbf{U}}}$ be the lift into $\widetilde{\mathbf{U}}\times\p3$
of the sheaf $\mathbf{E}$ and let $\mathbf{L}$ be the lift onto $\widetilde{\mathbf{\Gamma}}$
of the sheaf $\widetilde{\mathbb{L}}_0$ defined in (\ref{tilde L0}). We thus obtain a vector bundle 
$\boldsymbol{\tau}:= \mathcal{H}om_{\widetilde{\mathbf{\Gamma}}/\widetilde{\mathbf{U}}}
(\mathbf{E}_{\widetilde{\mathbf{U}}},\boldsymbol{\iota}_*\mathbf{L}(2))$ over
$\widetilde{\mathbf{U}}$, the fiber of which over a point $u\in\widetilde{\mathbf{U}}$ lying over a point
$([F],C,[L])\in\mathcal{B}$ is by construction isomorphic to $\mathrm{Hom}(F,L(2))$. Hence by (\ref{dim Phom}) the associated projective bundle $\mathbf{P}\boldsymbol{\tau}\to\widetilde{\mathbf{U}}$ 
is a ${\mathbb{P}}^{4d_1d_2-1}$-fibration. Applying to it the argument from the proof of Proposition 6.4 from \cite{JMT} we obtain that this fibration descends to a Severi--Brauer variety 
$\mathbf{p}_{\mathcal{B}}:\mathbf{P}_{\mathcal{B}}\to \mathcal{B}$ with fibers ${\mathbb{P}}^{4d_1d_2-1}$
over $\mathcal{B}$ such that, by the above, for any point $([F],C,[L])\in\mathcal{B}$ one has
\begin{equation}\label{fibre of pB}
\mathbf{p}_{\mathcal{B}}^{-1}([F],C,[L])=
\mathbb{P}(\mathrm{Hom}(F,L(2))).
\end{equation}
This, together with (\ref{fibre of pi}), shows that the variety $\mathbf{P}_{\mathcal{B}}$ contains  $\widetilde{W}(d_1,d_2,c)$ as a dense open subset. 

Finally, for the case $c=0$, note that although the fibers of the projection $\pi:\widetilde{W}(d_1,d_2,0)\to\mathbb{J}^0$ are not open subsets of a projective space, they are still smooth.
\end{remark}

From the previous remark and the smoothness of $(\cali(c)\times\mathbb{J})_e$ (see Lemma 
\ref{Lemma 4.2}) we obtain the following statement.

\begin{theorem}\label{Thm 4.3}
For each $c\ge0$ and $d_2\ge d_1\ge 1$, $\widetilde{W}(d_1,d_2,c)$ has a natural structure of a smooth integral scheme of dimension
\begin{equation}\label{dim W}
\dim\widetilde{W}(d_1,d_2,c) = 8c-3+\frac{1}{2}d_1d_2(d_1+d_2+4)+\dim\calh 
\end{equation}
where $\dim\calh$ is given by (\ref{dim H}), and, for $c\ge1$, the map 
$$ \pi:\ \widetilde{W}(d_1,d_2,c)\to(\cali(c) \times\mathbb{J})_e $$
defined in (\ref{pi}) is a morphism. Respectively, $\widetilde{W}(d_1,d_2,c)^0$ is a dense open subscheme of $\widetilde{W}(d_1,d_2,c)$.
\end{theorem}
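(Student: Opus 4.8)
The plan is to establish Theorem \ref{Thm 4.3} by assembling the structural results already proven, treating the cases $c \geq 1$ and $c = 0$ separately, since the former descends to a Severi--Brauer variety (Remark \ref{Remark 4.2}) while the latter requires quotienting by $GL(2)$. For $c \geq 1$, I would argue as follows. Remark \ref{Remark 4.2} exhibits $\widetilde{W}(d_1,d_2,c)$ as a dense open subset of the Severi--Brauer variety $\mathbf{P}_{\mathcal{B}} \to \mathcal{B}$ with fibers $\mathbb{P}^{4d_1d_2-1}$, where $\mathcal{B} = (\cali(c) \times \mathbb{J})_e$. Since $\mathcal{B}$ is smooth by Lemma \ref{Lemma 4.2} and a projective bundle (even a Severi--Brauer fibration) over a smooth integral base is itself smooth and integral, the same holds for its dense open subscheme $\widetilde{W}(d_1,d_2,c)$. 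The projection $\pi$ in (\ref{pi}) is then simply the restriction of $\mathbf{p}_{\mathcal{B}}$ to this open subset, hence a morphism.

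The dimension count is then purely additive along the fibration. First I would record that $\mathcal{B} = (\cali(c) \times \mathbb{J})_e$ is dense open in $\cali(c) \times \mathbb{J}$, so $\dim \mathcal{B} = \dim \cali(c) + \dim \mathbb{J} = (8c-3) + \dim \mathbb{J}$, using that $\cali(c)$ has dimension $8c-3$. Next I would compute $\dim \mathbb{J}$: since $\mathbb{J}$ is dense open in $\mathbb{M}_s$, and the fibration $\pi_s$ in (\ref{fibration pi s}) has fibers $\mathrm{Pic}^{g-1}(C)$ of dimension $g = p_a(C)$ over the base $\calh_s$ (which is dense in $\calh$), we get $\dim \mathbb{J} = p_a(C) + \dim \calh = 1 + \tfrac{1}{2}d_1d_2(d_1+d_2-4) + \dim \calh$ by the genus formula in (\ref{d,pa,H}). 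Adding the fiber dimension $4d_1d_2 - 1$ from (\ref{dim Phom}) yields
\begin{equation}\label{dim-count-proof}
\dim \widetilde{W}(d_1,d_2,c) = (8c-3) + 1 + \tfrac{1}{2}d_1d_2(d_1+d_2-4) + \dim\calh + (4d_1d_2 - 1),
\end{equation}
and a short algebraic simplification $1 + \tfrac{1}{2}d_1d_2(d_1+d_2-4) + 4d_1d_2 - 1 = \tfrac{1}{2}d_1d_2(d_1+d_2+4)$ gives exactly the formula (\ref{dim W}).

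For the case $c = 0$, the same bookkeeping applies but with two modifications. Here $\cali(0)$ is a single point, so the leading term $8c - 3 = -3$ is absorbed correctly, and $(\cali(0)\times\mathbb{J})_e = \mathbb{J}$ as noted after Lemma \ref{Lemma 4.2}; the base of the fibration is $\mathbb{J}^0$ and the fibers are the quotients $\mathbb{P}\Hom(2\cdot\op3,C,L)_e/\mathbb{P}GL(2)$ from (\ref{W(0)}). The last sentence of Remark \ref{Remark 4.2} already guarantees these fibers are smooth, and their dimension is again $4d_1d_2 - 1$ by item (iv) of Lemma \ref{Lemma 4.2}, since passing to the $\mathbb{P}GL(2)$-quotient does not change the dimension here (the relevant count of item (iv) is stated independently of $c$). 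Thus smoothness, integrality, and the dimension formula (\ref{dim W}) all persist. Finally, $\widetilde{W}(d_1,d_2,c)^0 = \pi^{-1}((\cali(c)\times\mathbb{J})_e^0)$ is dense open in $\widetilde{W}(d_1,d_2,c)$ because $(\cali(c)\times\mathbb{J})_e^0$ is dense open in $(\cali(c)\times\mathbb{J})_e$ by Lemma \ref{Lemma 4.2}(v) and $\pi$ is an open continuous surjection (being a fibration).

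The main obstacle I anticipate is not in the dimension arithmetic but in verifying that the Severi--Brauer descent of Remark \ref{Remark 4.2} genuinely endows $\widetilde{W}(d_1,d_2,c)$ with a \emph{scheme} structure compatible with smoothness and integrality, rather than merely an algebraic-space structure; this rests on the cited descent argument from Proposition 6.4 of \cite{JMT}, and one must ensure the étale-local universal bundle $\mathbf{E}$ and the Poincar\'e bundle $\widetilde{\mathbb{L}}_0$ patch coherently. For $c=0$ the corresponding subtlety is checking that the $GL(2)$-quotient is geometric and preserves smoothness of the total space, which the final remark handles fiberwise but which I would want to confirm globally.
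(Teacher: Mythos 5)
Your treatment of the case $c\ge1$ is correct and is essentially the paper's own argument: the authors likewise reduce the theorem to the dimension count, taking smoothness, integrality and the morphism property of $\pi$ from Remark \ref{Remark 4.2} together with the smoothness of $(\cali(c)\times\mathbb{J})_e$ established in Lemma \ref{Lemma 4.2}, and then add $\dim\cali(c)=8c-3$, the value of $\dim\mathbb{J}$ coming from (\ref{dim J}), and the fibre dimension $4d_1d_2-1$ from (\ref{dim Phom}), exactly as you do. Your density argument for $\widetilde{W}(d_1,d_2,c)^0$ is also fine.

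The case $c=0$, however, contains a genuine error. You assert that the fibres $\mathbb{P}\Hom(2\cdot\op3,C,L)_e/\mathbb{P}GL(2)$ of $\widetilde{W}(d_1,d_2,0)\to\mathbb{J}^0$ still have dimension $4d_1d_2-1$ because ``passing to the $\mathbb{P}GL(2)$-quotient does not change the dimension here.'' It does: $\mathbb{P}GL(2)$ is $3$-dimensional and acts with finite (indeed generically trivial) stabilizers, since the kernels $E(\mathbf{w})$ are stable and hence simple, so the quotient has dimension $4d_1d_2-1-3=4d_1d_2-4$. This drop by $3$ is not a harmless detail: it is precisely what produces the term $8\cdot0-3=-3$ in (\ref{dim W}), because $\cali(0)$ is a single point and contributes dimension $0$, not $-3$. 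With the fibre dimension you state, the count gives $\dim\mathbb{J}+4d_1d_2-1=\tfrac{1}{2}d_1d_2(d_1+d_2+4)+\dim\calh$, which exceeds the right-hand side of (\ref{dim W}) by $3$; so your conclusion that ``the dimension formula persists'' is inconsistent with your own fibre count. The paper's proof records the correct computation $\dim\widetilde{W}(d_1,d_2,0)=\dim\mathbb{J}+4d_1d_2-4$. The repair is one line, but as written your argument does not establish (\ref{dim W}) for $c=0$.
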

\begin{proof}
It is enough to prove (\ref{dim W}). For $c\ge1$, since $\dim\cali(c)=8c-3$, (\ref{dim W}) follows from (\ref{dim J}) and (\ref{dim Phom}). For $c=0$, one easily sees from (\ref{W(0)}) and (\ref{dim J}) that 
$$ \dim \widetilde{W}(d_1,d_2,0) = \dim \mathbb{J} + 4d_1d_2-4 = \frac{1}{2}d_1d_2(d_1+d_2+4)+\dim\calh - 3, $$
as desired.
\end{proof}

Now for any point $\mathbf{w}=([F],C,[L],\mathbf{k}\varphi)\in \widetilde{W}(d_1,d_2,c)$ set
$$
E(\mathbf{w}):=\ker(F\overset{\varphi}
{\twoheadrightarrow}L(2)).
$$
By definition, we have an exact triple
\begin{equation}\label{standard tr}
0\to E(\mathbf{w})\to F\overset{\varphi}{\to}L(2)\to0.
\end{equation}

One easily checks, using the irreducibility of $C$, that $E(\mathbf{w})$ is a stable sheaf (see \cite[Corollary 4.2 and Lemma 4.3]{JMT}) and, in fact, $[E(\mathbf{w})]\in\calm(c+d_1d_2)$. Moreover, Lemma \ref{Lemma 4.2}(i) and the triple (\ref{standard tr}) twisted by $\calo_{\p3}(-2)$ yield
\begin{equation}
[E(\mathbf{w})]\in\call(c+d_1d_2).
\end{equation}

Given a point $(C,[L])\in\mathbb{J}$, we call the invertible $\calo_C$-sheaf $L$ a \textit{theta -characteristic} on $C$ if
$$ L^{\otimes2}\simeq\omega_C. $$ 

Consider a subset of $\mathbb{J}$ defined as
\begin{equation}
\Theta_{\mathbb{J}}:=\{(C,[L])\in\mathbb{J}\ |\ L\ \textrm{is a theta-characteristic\ on}\ C\}.
\end{equation}
It is a locally closed subset of $\mathbb{J}$. (Indeed,
$\Theta_{\mathbb{J}}$ is a fixed point set of an involution $\mathbb{J}\to\mathbb{J},\ (C,[L])\mapsto
(C,[\omega_C\otimes L^{-1}])$.)

Denote
$$
\Theta_W(d_1,d_2,c):=\pi^{-1}((\cali(c)\times\mathbb{J})_e\cap(\cali(c)\times\overline{\Theta}_{\mathbb{J}})),
$$
$$
W(d_1,d_2,c):=\widetilde{W}(d_1,d_2,c)^0\setminus
\Theta_W(d_1,d_2,c),
$$
where $\overline{\Theta}_{\mathbb{J}}$ is the closure of
$\Theta_{\mathbb{J}}$ in $\mathbb{J}$.
By definition, $W(d_1,d_2,c)$ an open subset of $\widetilde{W}(d_1,d_2,c)$. Since for $p_a(C)>0$ the set $\overline{\Theta}_{\mathbb{J}}$ is clearly a proper closed subset of $\mathbb{J}$, it follows that for $p_a(C)>0$ the set $W(d_1,d_2,c)$ is a dense open subset of $\widetilde{W}(d_1,d_2,c)$.

\begin{proposition}\label{prop 4.5}
For an arbitrary closed point $\mathbf{w}=([F],C,[L],\mathbf{k}\varphi)\in W(d_1,d_2,c)$
with $c\ge0$, and $(d_1,d_2)\ne(1,1)$, $(d_1,d_2)\ne(1,2)$, the sheaf $E=E(\mathbf{w})$ satisfies the relations:
\begin{equation}\label{dim Ext2}
\dim\mathrm{Ext}^2(E,E)=h^1(N_{C/\p3})+p_a(C)-1,
\end{equation}
\begin{equation}\label{dim Ext1}
\dim\mathrm{Ext}^1(E,E)=h^1(N_{C/\p3})+ p_a(C)-1+8(c+d_1d_2)-3,
\end{equation}
where $p_a(C)$ and $h^1(N_{C/\p3})$ are given by (\ref{d,pa,H}) and (\ref{hiN}), respectively.
\end{proposition}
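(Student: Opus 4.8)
The plan is to reduce the two displayed formulas to a single $\ext^2$ computation and then to extract $\ext^2(E,E)$ from the defining sequence (\ref{standard tr}).

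First I would dispose of the outer terms. Since $E$ is stable, $\Hom(E,E)=\C$, and by Serre duality $\ext^3(E,E)\cong\Hom(E,E(-4))^\vee=0$, because a nonzero homomorphism between stable sheaves of different slopes cannot exist. As $E$ has $1$-dimensional singularities, Lemma \ref{chi ext tf lemma 2} gives $\sum_j(-1)^j\dim\ext^j(E,E)=-8c_2(E)+4=-8(c+d_1d_2)+4$, whence $\dim\ext^1(E,E)-\dim\ext^2(E,E)=8(c+d_1d_2)-3$. Thus (\ref{dim Ext1}) follows from (\ref{dim Ext2}), and it suffices to prove $\dim\ext^2(E,E)=h^1(N_{C/\p3})+p_a(C)-1$.

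Write $T:=L(2)$. I would feed (\ref{standard tr}) into the functors $\Hom(-,F)$, $\Hom(-,T)$ and $\Hom(E,-)$, using the following inputs: $\ext^{\ge2}(F,F)=0$ (smoothness of $\cali(c)$, or triviality of $F$ when $c=0$); $\ext^i(F,T)=H^i(F^\vee\otimes T)$, which vanishes for $i\ge1$ by Lemma \ref{Lemma 4.2}(ii) together with the support of $T$, and has dimension $4d_1d_2$ for $i=0$; the Serre-dual groups $\ext^i(T,F)$, concentrated in degree $2$ with $\ext^3(T,F)\cong H^0(F\otimes L(-2))^\vee=0$ again by Lemma \ref{Lemma 4.2}(ii); and $\ext^\bullet(T,T)$, computed from the local-to-global spectral sequence together with $\inext^q(T,T)=\wedge^qN_{C/\p3}$ for the codimension-$2$ local complete intersection $C$, giving $\ext^3(T,T)=H^1(\det N_{C/\p3})=H^1(\omega_C(4))=0$ and $\ext^2(T,T)=H^1(N_{C/\p3})\oplus H^0(\det N_{C/\p3})$. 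Chasing the first two sequences yields $\ext^2(E,F)\cong\ext^3(T,F)=0$ and $\ext^1(E,T)\cong\ext^2(T,T)$, so the long exact sequence of $\Hom(E,-)$ collapses to
\[ \ext^1(E,F)\xrightarrow{\ \alpha\ }\ext^1(E,T)\to\ext^2(E,E)\to\ext^2(E,F)=0, \]
and $\ext^2(E,E)=\coker\alpha$. Everything now reduces to computing $\rk\alpha$.

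By naturality of the connecting homomorphisms with respect to $\varphi\colon F\to T$, the map $\alpha$ sits in a commutative square with the surjection $\ext^1(E,F)\onto\ext^2(T,F)$ and the isomorphism $\ext^1(E,T)\cong\ext^2(T,T)$, so that $\rk\alpha=\rk\bigl(\varphi_*\colon\ext^2(T,F)\to\ext^2(T,T)\bigr)$. This last map is the crux, and is where I expect the real difficulty to lie. I would compute it sheaf-theoretically: $\ext^2(T,F)=H^0(\inext^2(T,F))$, and the sheaf map $\inext^2(T,F)\to\inext^2(T,T)=\det N_{C/\p3}$ induced by $\varphi|_C\colon F|_C\onto L(2)$ is surjective with kernel isomorphic to $\omega_C\otimes L^{-2}$ (using $\ker(\varphi|_C)\cong L^{-1}(-2)$ and the adjunction $\det N_{C/\p3}=\omega_C(4)$). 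Hence $\ker\varphi_*=H^0(\omega_C\otimes L^{-2})$, a degree-$0$ line bundle on $C$ that is effective precisely when $L^{\otimes2}\cong\omega_C$, i.e. exactly when $L$ is a theta-characteristic. Since $\mathbf{w}\in W(d_1,d_2,c)$ avoids $\overline{\Theta}_{\mathbb{J}}$, and the excluded pairs $(d_1,d_2)\ne(1,1),(1,2)$ guarantee $p_a(C)\ge1$ (so that $\operatorname{Pic}^0(C)\ne0$ and theta-characteristics can genuinely be avoided), this kernel vanishes and $\rk\alpha=\dim\ext^2(T,F)=4d_1d_2$. Combining with $h^0(\det N_{C/\p3})=h^0(\omega_C(4))=p_a(C)-1+4d_1d_2$ (Riemann--Roch, using $h^1(\omega_C(4))=0$), I obtain $\dim\ext^2(E,E)=h^1(N_{C/\p3})+h^0(\det N_{C/\p3})-4d_1d_2=h^1(N_{C/\p3})+p_a(C)-1$, as required. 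The case $c=0$ runs identically for the computation of $\ext^2(E,E)$, since only $\ext^{\ge2}(F,F)=0$ and the vanishings of Lemma \ref{Lemma 4.2} are actually used.
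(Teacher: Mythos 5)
Your argument is correct and is essentially the paper's own proof: both reduce $\ext^2(E,E)$ to the cokernel of an injection $\ext^2(L(2),F)\hookrightarrow\ext^2(L(2),L(2))$ of respective dimensions $4d_1d_2$ and $h^1(N_{C/\p3})+h^0(\det N_{C/\p3})$, using the same vanishings ($\ext^{\ge2}(F,F)=0$, $\ext^3(L(2),F)=0$, Lemma \ref{Lemma 4.2}), the same restriction of (\ref{standard tr}) to $C$, and the same key point that $H^0(\omega_C\otimes L^{-2})=0$ because $L$ is not a theta-characteristic. The only divergence is bookkeeping: you realize $\ext^2(E,E)$ as $\coker\bigl(\ext^1(E,F)\to\ext^1(E,L(2))\bigr)$ and transfer to $\varphi_*$ by naturality, whereas the paper routes through $\ext^2(E,E)\simeq\ext^3(L(2),E)$ and a diagram of local-to-global spectral sequences to show the connecting map $\delta$ is an isomorphism --- a slightly longer path to the same short exact sequence.
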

\begin{proof}
The conditions $(d_1,d_2)\ne(1,1)$, $(d_1,d_2)\ne(1,2)$ imply that $p_a(C)>0$, so that $W(d_1,d_2,c)$ is non-empty. Apply the functor $\mathrm{Hom}(L(2),-)$ to the triple
(\ref{standard tr}):
\begin{equation}\label{long1}
\begin{split}
&...\to\mathrm{Ext}^1(L(2),L(2))\overset{\delta}{\to}
\mathrm{Ext}^2(L(2),E)\\
&\to\mathrm{Ext}^2(L(2),F)\to
\mathrm{Ext}^2(L(2),L(2))\to\mathrm{Ext}^3(L(2),E)\\
&\to\mathrm{Ext}^3(L(2),F)\to...
\end{split}
\end{equation}
Next, apply the functor $\mathcal{H}om(L(2),-)$ to (\ref{standard tr}). Using the vanishing of the
sheaves $\inhom(L(2),F)$,  $\inext^1(L(2),F)$ and 
$\inext^i(F,L(-2)),\ i=1,2,$
(note that $\dim L(2)=1$
and $F$ is locally free on $\p3$) we obtain an isomorphism $\partial_1:\mathcal{H}om(L(2),L(2))\overset{\simeq}{\to}
\inext^1(L(2),E)$ 
and an exact sequence
\begin{equation}\label{d2}
0\to\inext^1(L(2),L(2))\overset{\partial_2}{\to}
\inext^2(L(2),E)\to\inext^2(L(2),F)
\overset{\varepsilon}{\to}\inext^2(L(2),L(2)) 
\end{equation}

Respectively, applying the functor $\inhom(\cdot,L(-2))$ to the triple (\ref{standard tr}) yields an isomorphism
\begin{equation}\label{psi1}
\psi:\ \inext^1(E,L(-2))\simeq
\inext^1(L(2),L(-2)).
\end{equation}

The isomorphisms
$h^1(\partial):\ H^1(\mathcal{H}om(L(2),L(2)))\overset{\simeq}{\to}H^1(\inext^1(L(2),E))$, 
the homomorphism $\delta$ in (\ref{long1}) and the monomorphism 
$h^0(\partial_2):\ H^0(\inext^1(L(2),L(2)))\to H^0(\inext^2(L(2),E))$ induced by (\ref{d2})
fit in the commutative diagram
\begin{equation}\label{diag1}
\xymatrix{
0\ar[d] & 0\ar[d] \\
H^1(\mathcal{H}om(L(2),L(2)))
\ar[r]^-{h^1(\partial_1)}_-{\simeq} \ar[d] &
H^1(\inext^1(L(2),E))\ar[d]^j\\
\mathrm{Ext}^1(L(2),L(2))
\ar[r]^-{\delta}\ar[d] & 
\mathrm{Ext}^2(L(2),E)\ar[d] \\
H^0(\inext^1(L(2),L(2)))\ar[r]^-{h^0(\partial_2)}\ar[d] &
H^0(\inext^2(L(2),E))\ar[d] \\
0 &\ 0,}
\end{equation}
in which the vertical exact triples come from the spectral sequences 
$$ H^p(\inext^q(L(2),L(2)))\Rightarrow \mathrm{Ext}^{\bullet}(L(2),L(2)) ~{\rm and}~ $$
$$ H^p(\inext^q(L(2),E))\Rightarrow \mathrm{Ext}^{\bullet}((L(2),E), $$
respectively.
Now restrict the triple (\ref{standard tr}) onto the curve $C$. Using the relation 
$\det F\otimes\calo_C \simeq\calo_C$ we obtain an exact triple
$0\to L^{-1}(-2)\to F\otimes\calo_C\to L(2)\to0$.
Tensoring this triple with the invertible $\calo_C$-sheaf
$\inext^2(L(2),\calo_C)$ and using the isomorphisms
$\inext^2(L(2),\calo_C)\otimes L^{-1}(-2)\simeq \inext^2(L(2),L^{-1}(-2))$,
$\inext^2(L(2),\calo_C)\otimes F\simeq
\inext^2(L(2),F)$,
$\inext^2(L(2),\calo_C)\otimes L(2)\simeq
\inext^2(L(2),L(2))$ we obtain the exact triple
$$
0\to\inext^2(L(2),L^{-1}(-2))\to
\inext^2(L(2),F)\overset{\varepsilon}{\to}\inext^2(L(2),L(2))\to0.
$$
This triple together with (\ref{d2}) yields an exact triple
\begin{equation}\label{triple0}
0\to\inext^1(L(2),L(2))\overset{\partial_2}{\to}
\inext^2(L(2),E)\to\inext^2(L(2),L^{-1}(-2))
\to0.
\end{equation}
Note that, since $L$ is not a theta-characteristic on $C$,
it follows that the sheaf
\begin{equation}\label{nontrivial}
\begin{split}
& \inext^2(L(2),L^{-1}(-2))\simeq
\inext^2(\calo_C,\calo_C)\otimes L^{-2}(-4)\simeq \\
& \det N_{C/\p3}\otimes\omega_{\p3}\otimes L^{-2}\simeq\omega_C\otimes L^{-2}
\end{split}
\end{equation}
is an invertible $\calo_C$-sheaf of degree 0, nonisomorphic to $\calo_C$,
hence it has no sections. Thus the above triple
gives an isomorphism
\begin{equation}\label{isom'm1}
H^0(\inext^1(L(2),L(2)))\overset{h^0(\partial_2)}{\simeq}H^0(\inext^2(L(2),E)).
\end{equation} 
This is the lower horizontal isomorphism in the diagram (\ref{diag1}) from which it follows that the homomorphism $\delta$ is an
isomorphism:
\begin{equation}\label{isom delta}
\delta:\ \mathrm{Ext}^1(L(2),L(2))\overset{\simeq}{\to}
\mathrm{Ext}^2(L(2),E).
\end{equation}

Next, from Lemma \ref{Lemma 4.2}(i) and the triple (\ref{L's}) twisted by $\calo_{\p3}(-4)$ it follows easily
that $H^0(F\otimes L(-2))=0$, and the Serre--Grothendieck duality together with the isomorphism
$F\simeq F^{\vee}$ implies
\begin{equation}\label{Ext3=0}
\mathrm{Ext}^3(L(2),F)\simeq
\mathrm{Hom}(F,L(-2))^{\vee}=0.
\end{equation}
Similarly, since $\dim C=1$, it follows that
$\mathrm{Ext}^1(L(2),F(-4))\simeq
H^2(F\otimes L(-2))=0$. Thus the exact sequence
$$ 0\to\mathrm{Hom}(F,F(-4))\to \mathrm{Hom}(E,F(-4))\to \mathrm{Ext}^1(L(2),F(-4)) $$
together with the equality $\mathrm{Hom}(F,F(-4))=0$ (note that $F$ is
stable) yields $\mathrm{Hom}(E,F(-4))=0$, and again
by Serre--Grothendieck duality we obtain 
\begin{equation}\label{another Ext3=0}
\mathrm{Ext}^3(F,E)=0.
\end{equation}

Next, twisting the triple (\ref{standard tr}) with $F^{\vee}\simeq F$ and passing to cohomology
we obtain an exact sequence 
$H^1(F\otimes L(2))\to H^2(F^{\vee}\otimes E)\to H^2(F^{\vee}\otimes F)$. Using the vanishing of
$H^2(F^{\vee}\otimes F)$ (see \cite{JV}) and of $H^1(F\otimes L(2))$ (Lemma \ref{Lemma 4.2}(ii)) we get since $F$ is locally free:
\begin{equation}\label{Ext2=0}
\mathrm{Ext}^2(F,E)\simeq H^2(F^{\vee}\otimes E)=0.
\end{equation}
Now apply the functor $\mathrm{Hom}(-,E)$ to the triple (\ref{standard tr}) and use (\ref{another Ext3=0}) and (\ref{Ext2=0}) to obtain the isomorphism
\begin{equation}\label{Ext2(E,E)}
\mathrm{Ext}^2(E,E)\simeq\mathrm{Ext}^3(L(2),E).
\end{equation}
The sequence (\ref{long1}) together with (\ref{isom delta}), (\ref{Ext3=0}) and (\ref{Ext2(E,E)}) yields an exact sequence
\begin{equation}
0\to\mathrm{Ext}^2(L(2),F)\to\mathrm{Ext}^2(L(2),L(2))
\to\mathrm{Ext}^2(E,E)\to0.
\end{equation}

Next, since $F\simeq F^{\vee}$ is locally free, the
Serre--Grothendieck duality on $\p3$ and on $C$ yields:
\begin{equation}\label{Ext2(l2,cale)}
\mathrm{Ext}^2(L(2),F)\simeq H^1(F^{\vee}\otimes
L(-2))^{\vee}\simeq H^0(F\otimes(L^{-1}
\otimes\omega_C)(2))^{\vee}.
\end{equation}
On the other hand, Riemann-Roch for the sheaf 
$F\otimes(L^{-1}\otimes\omega_C)(2)$ and Lemma 
\ref{Lemma 4.2}(ii) imply
$h^0(F\otimes(L^{-1}\otimes\omega_C)(2))^{\vee}=
4d_1d_2$, hence (\ref{Ext2(E,E)}) and (\ref{Ext2(l2,cale)}) yield
\begin{equation}\label{dimExt2EE}
\dim\mathrm{Ext}^2(E,E)=\dim\mathrm{Ext}^2(L,L)-4d_1d_2.
\end{equation}

Next, using the fact  that $H^2(\inext^2(L,L))=0$ since $\dim\inext^2(L,L)=1$, we obtain that the
spectral sequence $H^p(\inext^q(L,L))\Rightarrow
\mathrm{Ext}^{\bullet}(L,L)$ yields an exact triple 
$$
0\to H^1(N_{C/\p3})\to\mathrm{Ext}^2(L,L)\to H^0(\inext^2(L,L))\to0. 
$$         
Note that in view the Serre duality $h^1(\calo_C(d_1+d_2))=h^0(\calo_C(-4))=0$, 
and the isomorphisms $\inext^2(L,L)\simeq\inext^2\calo_C,\calo_C)\simeq\det N_{C/\p3}\simeq\calo_C(d_1+d_2)$ we obtain by
Riemann-Roch 
$h^0(\inext^2(L,L))=h^0(\calo_C(d_1+d_2))=
\chi(\calo_C(d_1+d_2))=d_1d_2(d_1+d_2)+1-p_a(C)$.
This together with the above triple yields
$$
\dim\mathrm{Ext}^2(L,L)=h^1(N_{C/\p3})+d_1d_2(d_1+d_2)+
1-p_a(C).
$$
Now (\ref{dim Ext2}) follows by substituting the last equality in (\ref{dimExt2EE}) and using (\ref{d,pa,H}).
The equality (\ref{dim Ext1}) follows from here in view of the relation $\dim\ext^1(E,E)-\dim\ext^2(E,E)=8c_2(E)-3=8(c+d_1d_2)-3$ (see Lemma \ref{chi ext tf lemma 2} above).
\end{proof}

Consider the map 
\begin{equation}\label{f}
f:\ \widetilde{W}(d_1,d_2,c)\to\call(c+d_1d_2),\ \mathbf{w}\mapsto[E(\mathbf{w})].
\end{equation}
Using Remark \ref{Remark 4.2} and Theorem \ref{Thm 4.3}
one easily sees that $f$ is a morphism.

\begin{theorem}\label{Thm 4.6}
For any point $\mathbf{w}\in W(d_1,d_2,c)$ with $c\ge0$, and $(d_1,d_2)\ne(1,1)$, $(d_1,d_2)\ne(1,2)$, one has
\begin{equation}\label{dimTwM}
\dim\mathrm{Ext}^1(E(\mathbf{w}),E(\mathbf{w})) = \dim W(d_1,d_2,c).
\end{equation} 
In addition, the morphism $f|_{W(d_1,d_2,c)}$ is an open embedding. Thus
\begin{equation}\label{calm(...)}
\mathcal{C}(d_1,d_2,c):=f(W(d_1,d_2,c))
\end{equation}
is a dense smooth open subset of an irreducible component
$$
\overline{\mathcal{C}(d_1,d_2,c)} = \overline{f(\widetilde{W}(d_1,d_2,c))}
$$ 
of $\call(c+d_1d_2)$, hence also of $\calm(d_1d_2+c)$.
\end{theorem}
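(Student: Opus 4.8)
The plan is to establish the three assertions in turn: the numerical identity (\ref{dimTwM}), the injectivity of $f|_W$, and then, by a tangent-space count combined with Zariski's Main Theorem, the open-embedding and irreducible-component statements. The genuinely substantial input, the computation of $\dim\ext^1(E,E)$, has already been carried out in Proposition \ref{prop 4.5}, so the argument here is mostly a matter of organizing formal consequences.

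First I would check (\ref{dimTwM}) by a direct comparison of formulas. Since the hypotheses $(d_1,d_2)\ne(1,1),(1,2)$ force $p_a(C)>0$, the subset $W(d_1,d_2,c)$ is dense and open in $\widetilde W(d_1,d_2,c)$, so its dimension is the one recorded in (\ref{dim W}); on the other hand $\dim\ext^1(E,E)$ is given by (\ref{dim Ext1}). Substituting the relation $\dim\calh=h^0(N_{C/\p3})=h^1(N_{C/\p3})+4d_1d_2$ from (\ref{hiN}) together with $p_a(C)-1=\tfrac12 d_1d_2(d_1+d_2-4)$ from (\ref{d,pa,H}), both quantities collapse to
\[ 8c-3+h^1(N_{C/\p3})+\tfrac12 d_1d_2(d_1+d_2)+6d_1d_2, \]
which proves the equality.

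The decisive geometric step is that $f|_W$ is injective, the point being that the isomorphism class of $E=E(\mathbf{w})$ recovers all of $\mathbf{w}$. Indeed $E^{\vee\vee}=F$, because $F$ is the reflexive hull of $E$ (the quotient $F/E$ being purely one-dimensional, hence of codimension two); the quotient $Q_E=F/E$ then determines the curve $C=\supp(Q_E)$ and the invertible sheaf $L(2)=Q_E$, whence $[L]$; and $\varphi$ is the canonical surjection $F\onto Q_E$, determined up to the scalar automorphisms of the line bundle $L(2)$ on the integral curve $C$, so that the class $\mathbf{k}\varphi$ is intrinsic to $E$. One uses here that $W$ is built from $\mathbb{J}$, which parametrizes pairs $(C,[L])$ and carries no additional Jacobian parameter, so that $f|_W$ is genuinely a bijection onto its image.

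Finally I would combine the tangent-space bound with these facts. For a stable sheaf $[E]\in\calm(c+d_1d_2)$ the Zariski tangent space is $\ext^1(E,E)$, and $\dim_{[E]}\calm\le\dim\ext^1(E,E)$. Since $f$ is injective on the irreducible variety $W$, its image has dimension $\dim W$; and since $\dim\ext^1(E,E)=\dim W$ at \emph{every} point of $W$ by (\ref{dimTwM}), at each $[E]\in f(W)$ one gets $\dim W\le\dim_{[E]}\calm\le\dim\ext^1(E,E)=\dim W$. Hence $\calm$ is smooth of dimension $\dim W$ at every point of $f(W)$, so $\overline{f(W)}$ is an irreducible component $\overline{\calc(d_1,d_2,c)}$, smooth along $f(W)$, with $f(W)$ dense in it; it lies in $\call(c+d_1d_2)$ by the discussion preceding Proposition \ref{prop 4.5}, hence is also a component of $\calm(d_1d_2+c)$. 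To upgrade injectivity to an open immersion I would invoke Zariski's Main Theorem: over $\CC$ the morphism $f|_W\colon W\to\overline{\calc(d_1,d_2,c)}$ is quasi-finite and, being injective and dominant in characteristic $0$, birational; as the target is smooth, hence normal, along the image, $f|_W$ is an open immersion onto its image. The step I expect to require the most care is precisely the injectivity reconstruction, that is, verifying that passing from $\mathbf{w}$ to $E$ loses no hidden parameter, since the entire Main-Theorem argument, and with it the identification of $\calc(d_1,d_2,c)$ as an open subscheme of the component, rests on it.
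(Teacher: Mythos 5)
Your proposal is correct and follows essentially the same route as the paper: the dimension identity by comparing (\ref{dim W}) with (\ref{dim Ext1}) via (\ref{d,pa,H})--(\ref{hiN}), injectivity of $f|_{W}$ (which the paper dismisses as ``by construction'' and you rightly flesh out through $F\simeq E^{\vee\vee}$ and $Q_E\simeq L(2)$), and a tangent-space count at each point of the image. The only divergence is in the last step: where the paper asserts directly that the Kodaira--Spencer differential is an isomorphism by (\ref{dimTwM}), you deduce smoothness of $\calm(d_1d_2+c)$ along $f(W)$ from $\dim W\le\dim_{[E]}\calm\le\dim\ext^1(E,E)=\dim W$ and then invoke Zariski's Main Theorem -- a valid and arguably more self-contained way to obtain the open embedding, since equality of tangent-space dimensions alone does not by itself force the differential to be an isomorphism.
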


\begin{proof}
Equality (\ref{dimTwM}) follows by comparing formulas (\ref{dim W}) and (\ref{dim Ext1}) and using (\ref{d,pa,H})-(\ref{hiN}).

As for the second claim, note that, $f|_{W(d_1,d_2,c)}$ is an injective morphism by construction. Its Kodaira-Spencer map  
$df|_{\mathbf{w}}:\ T_{\mathbf{w}}W(d_1,d_2,c)\to T_{f(\mathbf{w})}\mathcal{M}(d_1d_2+c)=
\mathrm{Ext}^1(E(\mathbf{w}),E(\mathbf{w}))$
is an isomorphism by (\ref{dimTwM}) for any $\mathbf{w}\in W(d_1,d_2,c)$. The assertion follows.
\end{proof}

\begin{remark}
The cases $(d_1,d_2)=(1,1)$, and $(d_1,d_2)=(1,2)$, in which $C$ is either a line or a conic, respectively, were studied in \cite{JMT}, where elementary transformations of instanton bundles by smooth rational curves of arbitrary degree are considered. In fact, for $k=1,2$, $\overline{\mathcal{C}(1,k,c)}$ coincides with the variety $\overline{\cald(k,c+k)}$ introduced in \cite[Section 6]{JMT}. It turns out that $\overline{\cald(k,c+k)}$ are irreducible projective varieties of dimension $8(c+k)-4$ lying in the closure $\overline{\cali(c+k)}$ of $\cali(c+k)$ within $\calm(c+k)$. In particular, for $k=1,2$, $\overline{\mathcal{C}(1,k,c)}$ does not define a new irreducible component of $\calm(c+k)$, diferently from the conclusion of Theorem \ref{Thm 4.6}.
\end{remark}

We conclude this section with a result in the same spirit of \cite[Proposition 3.6]{Ein} and Theorem \ref{ein 0d} above, showing that the the number of irreducible components of $\calm(n)$ whose generic points correspond to sheaves with 1-dimensional singularities becomes arbitrarily large as $n$ grows.

\begin{theorem}\label{Thm 8.7}
Let $\eta_n$ denote the number of irreducible components of $\calm(n)$ whose generic points correspond to sheaves with 1-dimensional singularities. Then \linebreak $\lim_{n\to\infty}\eta_n=\infty$.
\end{theorem}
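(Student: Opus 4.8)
The plan is to mine the infinite supply of components produced by Theorem \ref{Thm 4.6} and to show that already a one-parameter subfamily of them fills $\calm(n)$ with arbitrarily many pairwise distinct such components, for \emph{every} large $n$. Recall that for each triple $(d_1,d_2,c)$ with $c\ge0$, $d_2\ge d_1\ge1$ and $(d_1,d_2)\notin\{(1,1),(1,2)\}$, Theorem \ref{Thm 4.6} produces an irreducible component $\overline{\calc(d_1,d_2,c)}$ of $\calm(d_1d_2+c)$ whose generic point is a sheaf $E$ with $E^{\vee\vee}$ an instanton bundle of charge $c$ and $Q_E=L(2)$ a line bundle on the complete intersection curve $C=C_{d_1,d_2}$, hence of pure dimension $1$. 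By (\ref{dim Ext1}), its dimension equals
\[ \dim\overline{\calc(d_1,d_2,c)} = 8(c+d_1d_2)-3 + h^1(N_{C/\p3}) + p_a(C) - 1 . \]
Writing $n=c+d_1d_2$, the crucial observation is that for fixed $n$ this dimension is $8n-3$ plus the quantity $\phi(d_1,d_2):=h^1(N_{C/\p3})+p_a(C)-1$, which depends only on $(d_1,d_2)$.

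First I would fix a convenient infinite family of pairs and make $\phi$ explicit. The cleanest choice is $(d_1,d_2)=(1,d)$ with $d\ge3$, for which $C$ is a plane curve of degree $d$; formulas (\ref{d,pa,H}) and (\ref{hiN}) give $p_a(C)=\binom{d-1}{2}$ and $h^1(N_{C/\p3})=\binom{d-1}{3}-\binom{d-2}{3}=\binom{d-2}{2}$, so that $\phi(1,d)=\binom{d-1}{2}+\binom{d-2}{2}-1$ is a \emph{strictly increasing} function of $d$ for $d\ge3$. Note that the restriction $d\ge3$ is precisely what keeps us away from the two excluded cases $(1,1)$ and $(1,2)$, where $p_a(C)=0$ and no new component arises.

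Next, for a fixed $n\ge3$ I would let $d$ range over $3,4,\dots,n$ and set $c=n-d\ge0$. Each such pair gives, by Theorem \ref{Thm 4.6}, an irreducible component $\overline{\calc(1,d,n-d)}$ of $\calm(n)$ whose generic point has pure $1$-dimensional singularities, of dimension $8n-3+\phi(1,d)$. Since $\phi(1,d)$ is strictly increasing in $d$, these $n-2$ components have pairwise distinct dimensions, and two irreducible components of distinct dimension cannot coincide; hence they are pairwise distinct. This yields $\eta_n\ge n-2$ for all $n\ge3$, and therefore $\lim_{n\to\infty}\eta_n=\infty$.

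The argument is short precisely because Theorem \ref{Thm 4.6} has already done the heavy lifting, and the only real point requiring care is the \emph{distinctness} of the components. I do not expect to need any invariant finer than the dimension: the strict monotonicity of $\phi(1,d)$ forces distinctness immediately and, moreover, delivers the \emph{full} limit rather than merely a $\limsup$ (in contrast with Theorem \ref{ein 0d}), because the family $\{(1,d)\}_{3\le d\le n}$ is available for \emph{every} $n$ and not just along a subsequence. The one mild subtlety worth flagging is that these components must genuinely be counted by $\eta_n$ and not by $\zeta_n$, i.e.\ the generic sheaf must have pure $1$-dimensional singularities; this is immediate, since $Q_E=L(2)$ is a line bundle on the pure one-dimensional curve $C$ and therefore has no $0$-dimensional subsheaves.
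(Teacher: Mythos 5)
Your proof is correct, and it follows the same basic strategy as the paper's own (very terse) argument: both extract one component $\overline{\calc(d_1,d_2,c)}$ of $\calm(n)$ from Theorem \ref{Thm 4.6} for each admissible decomposition $n=c+d_1d_2$ and use the fact that the number of such decompositions is unbounded in $n$. Where you genuinely add something is on the question of \emph{distinctness}: the paper's proof leaves implicit that different triples with the same value of $c+d_1d_2$ give rise to genuinely different irreducible components, whereas you settle this by restricting to the subfamily $(d_1,d_2)=(1,d)$, $3\le d\le n$, and reading off from (\ref{dim Ext1}), (\ref{d,pa,H}) and (\ref{hiN}) that the component dimensions $8n-3+\binom{d-1}{2}+\binom{d-2}{2}-1$ are strictly increasing in $d$, so no two of them can coincide. (The paper could equally have distinguished the components by the degree or genus of the curve supporting $Q_E$, but some such argument is needed and yours is the cleanest.) Your remaining checks are all sound: $d\ge3$ avoids the excluded cases $(1,1)$ and $(1,2)$; $c=n-d\ge0$ is within the hypotheses of Theorem \ref{Thm 4.6}; and the generic sheaf has pure $1$-dimensional singularities because $Q_E\simeq L(2)$ is invertible on the pure one-dimensional curve $C$, so these components are indeed counted by $\eta_n$ and not by $\zeta_n$. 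The payoff of your version is the explicit bound $\eta_n\ge n-2$ for all $n\ge3$, which makes the full limit (rather than a $\limsup$ as in Theorem \ref{ein 0d}) completely transparent.
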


\begin{proof}
Indeed, the number of ways in which a given positive integer $n$ can be represented as a sum $n=c+d_1d_2$, where $c,\ d_1$ and $d_2$ are positive integers and $(d_1,d_2)\ne(1,1),\ (d_1,d_2)\ne(1,2)$, is unbounded as $n$ grows. Thus the result follows from Theorem \ref{Thm 4.6}.
\end{proof}

\section{Nonemptyness of the intersection $\overline{\mathcal{C}(d_1,d_2,c)}\cap\overline{\cali(n)}$} \label{intersection}

In this section we will perform an inductive procedure showing that each irreducible component
$\overline{\mathcal{C}(d_1,d_2,c)}$ of $\calm(d_1d_2+c)$ (with $c\ge0$, and $(d_1,d_2)\ne(1,1)$, $(d_1,d_2)\ne(1,2)$), constructed in Theorem \ref{Thm 4.6}, has a nonempty intersection with the closure of the instanton component $\overline{\cali(d_1d_2+c)}$.

We start with a construction of a reduced curve $C\in\calh=\calh_{d_1,d_2}$ completely decomposable into
a union of projective lines and satisfying some additional properties. Namely, we prove the following lemma.

\begin{lemma}\label{Lemma 5.1}
For any $1\le d_1\le d_2$ there exists a curve $C\in\calh$ which is completely decomposable into a union of $d_1d_2$ projective lines
\begin{equation}\label{C decomposable}
C=\underset{i=1}{\overset{d_1d_2}{\bigcup}}\ell_i
\end{equation}
such that
\begin{itemize}
\item[(i)] The $\calo_C$-sheaf
\begin{equation}\label{polystable}
L_0=\underset{i=1}{\overset{d_1d_2}{\bigoplus}} \calo_{\ell_i}(-1)
\end{equation}
is $\op3(1)|_C$-semistable.
\item[(ii)] There exists a curve $Y$ with a marked point $0\in Y$, a morphism $f:Y\to\calh$ and a sheaf $\mathbf{L}$ on
$Y\times_{\calh}\calz$, flat over $Y$ and such that, for $C_t:=pr^{-1}(f(t)),\ t\in Y$, where $pr:\calz\to\calh$ is the projection, one has
\begin{equation}\label{isom L0}
\mathbf{L}|_{C_0}\simeq L_0,
\end{equation}
where $C_0$ is the curve $C$ from (\ref{C decomposable}), and 
\begin{equation}\label{Lt}
\mathbf{L}|_{C_t}\ \textit{is locally free},\ \ \ t\in Y^*:=Y\setminus\{0\}.
\end{equation}
\end{itemize}\end{lemma}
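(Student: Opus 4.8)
The plan is to split the statement into the construction of a completely decomposable nodal curve (which also yields (i)) and the degeneration of sheaves in (ii), and to feed the latter into the compactified Jacobian machinery of Section~\ref{1dsing}, ending with Lemma~\ref{Lemma 4.1}. For the curve, I would take $d_1$ general hyperplanes $H_1,\dots,H_{d_1}$ and $d_2$ general hyperplanes $H'_1,\dots,H'_{d_2}$ in $\p3$ and set $S_{d_1}=\bigcup_jH_j$, $S_{d_2}=\bigcup_kH'_k$, so that $C:=S_{d_1}\cap S_{d_2}=\bigcup_{j,k}\ell_{jk}$ with $\ell_{jk}:=H_j\cap H'_k$. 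Since the planes are in general position, the $d_1d_2$ lines $\ell_{jk}$ are distinct, no point of $\p3$ lies on more than three of the $d_1+d_2$ planes, and the two surfaces share no common component; hence $C$ is a reduced complete intersection curve whose only singularities are transverse meetings of two lines, i.e. ordinary double points. Thus $C\in\calh$, and relabelling the $\ell_{jk}$ as $\ell_1,\dots,\ell_{d_1d_2}$ gives (\ref{C decomposable}).

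For (i) I would compute reduced Hilbert polynomials with respect to $H:=\op3(1)|_C$. Each summand satisfies $\chi(\calo_{\ell_i}(-1)\otimes H^{\otimes n})=\chi(\calo_{\p1}(n-1))=n$, so its reduced Hilbert polynomial is $p(n)=n$, and being a line bundle on the integral curve $\ell_i\cong\p1$ it is stable. Therefore $L_0=\bigoplus_i\calo_{\ell_i}(-1)$ is a direct sum of stable sheaves all of reduced Hilbert polynomial $n$; since the semistable sheaves of a fixed reduced Hilbert polynomial form an abelian category closed under direct sums, $L_0$ is semistable (in fact polystable), and $\chi(L_0\otimes H^{\otimes n})=d_1d_2\cdot n=P(n)$ matches (\ref{hilb poly C}). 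This is (i).

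For (ii), semistability of $L_0$ means that $x_0:=(C_0,[L_0]_S)$ is a point of $\mathbb{M}=\mathbb{M}^P_{\calz/\calh}$, and the core of the argument is to show $x_0\in\mathbb{M}^0=\overline{\mathbb{M}}_s$. I would first exhibit $L_0$ as a flat limit, \emph{on the fixed curve} $C_0$, of invertible $H$-semistable sheaves: at each node $p_m=\ell_i\cap\ell_{i'}$ the non locally free sheaf is the common, Euler-characteristic-preserving degeneration of the line bundles of the two adjacent multidegrees (an elementary modification at $p_m$), and performing this at all $\delta=d_1d_2(d_1+d_2-2)/2$ nodes simultaneously gives a flat family over a curve whose special member is $L_0$ and whose general member is invertible. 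Because $L_0$ is semistable and semistability is an open condition, the nearby invertible members are semistable, so they lie in $\mathbb{M}'$ and $x_0\in\overline{\mathbb{M}'}$. By Lemma~\ref{Lemma 4.1} one has $\mathbb{M}'\subset\mathbb{M}^0$, and $\mathbb{M}^0$ is closed, whence $x_0\in\overline{\mathbb{M}'}\subset\mathbb{M}^0$. Since $x_0\in\overline{\mathbb{M}}_s$, I would then take an irreducible curve in $\mathbb{M}^0$ through $x_0$ meeting $\mathbb{M}_s$ in a dense open set, let $Y$ be its normalization with $0\mapsto x_0$ and $f:Y\to\calh$ the induced morphism (so $f(0)=[C_0]$ and $f(Y^*)\subset\calh_s$), and pull back the Poincaré sheaf $\widetilde{\mathbb{L}}$ along a lift $Y\to\widetilde{\mathbb{J}}$ to obtain a $Y$-flat sheaf $\mathbf{L}$ on $Y\times_\calh\calz$ with $\mathbf{L}|_{C_t}$ invertible for $t\in Y^*$ and $\mathbf{L}|_{C_0}$ $S$-equivalent to $L_0$; a semistable reduction (finite base change together with a Langton-type modification of $\mathbf{L}$) makes the central fibre equal to the polystable representative of its class, which is exactly $L_0=\bigoplus_i\calo_{\ell_i}(-1)$.

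The main obstacle is precisely the degeneration in (ii): controlling the flat limit on the \emph{reducible} nodal curve $C_0$ so that it is the totally decomposable sheaf $L_0$ and not merely some sheaf in its $S$-equivalence class. This requires the Euler-characteristic-preserving modifications to be performed compatibly at all $\delta$ nodes — the balanced multidegree forced by the symmetric polarization $\op3(1)|_{C_0}$ is what keeps the neighbouring line bundles semistable — together with the semistable-reduction step that upgrades $S$-equivalence to an actual isomorphism of the central fibre. Everything else is either the elementary geometry of the plane configuration or a direct application of the compactified Jacobian machinery already assembled around Lemma~\ref{Lemma 4.1}.
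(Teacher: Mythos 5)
Your construction of $C$ and your proof of (i) coincide with the paper's: $C$ is the intersection of two general unions of planes, its only singularities are the $\tfrac{1}{2}d_1d_2(d_1+d_2-2)$ ordinary nodes where two of the lines meet, and $L_0$ is polystable as a direct sum of stable sheaves with common reduced Hilbert polynomial. Your route to $[L_0]_S\in\mathbb{M}^0$ is also essentially the paper's: the ``elementary modifications at the nodes'' are realized there as iterated extensions $0\to L_{k-1}\to L_k\to\calo_{\ell_k}(-1)\to0$ with class in $\mathrm{Ext}^1(\calo_{\ell_k}(-1),L_{k-1})\simeq H^0(\calo_{D_k})$, $D_k=\ell_k\cap C_{k-1}$; choosing all coordinates nonzero produces an invertible sheaf $L_{d_1d_2}(\mathbf{t})$ whose Jordan--H\"older graded is $L_0$, so $[L_0]_S=[L_{d_1d_2}(\mathbf{t})]_S$ is already a point of $\mathbb{M}'$ (no passage to the closure is needed), and Lemma~\ref{Lemma 4.1} gives $[L_0]_S\in\mathbb{M}^0$.

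The one place where you diverge, and where your argument has a genuine soft spot, is the extraction of a family $\mathbf{L}$ with $\mathbf{L}|_{C_0}$ isomorphic --- not merely $S$-equivalent --- to $L_0$. You propose to lift a curve through $x_0$ to $\widetilde{\mathbb{J}}$ and pull back the Poincar\'e sheaf; but $\widetilde{\mathbb{J}}$ parametrizes \emph{invertible} sheaves, so every point of $\varphi^{-1}(x_0)$ corresponds to some invertible $L_{d_1d_2}(\mathbf{t})$, and the central fibre of the pulled-back family is that sheaf, not $L_0$. You then invoke ``a semistable reduction (Langton-type modification)'' to replace the central fibre by the polystable representative of its $S$-class. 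That statement is true, but it is not Langton's theorem (the central fibre is already semistable), and it is not proved in your sketch: it needs an iterated elementary-modification argument, or equivalently the fact that the orbit of the polystable sheaf is the unique closed orbit in the GIT fibre over $x_0$. The paper sidesteps this entirely by working in the Quot scheme: it chooses a point $[\rho:\mathcal{V}_s\twoheadrightarrow L_0]$ of $R$ whose quotient is literally $L_0$, takes a curve $Y\subset R_0=p^{-1}(\mathbb{M}^0)$ through $[\rho]$ with $Y^*\subset p^{-1}(\mathbb{M}_s)$, and restricts the universal quotient sheaf, so the central fibre is $L_0$ by construction and the fibres over $Y^*$ are invertible sheaves on smooth complete intersection curves (which is what Lemma~\ref{Lemma 5.1b} later requires). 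Either supply the polystable-reduction lemma in detail or replace your last step by the Quot-scheme argument; as written, that step is asserted rather than proved.
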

\begin{proof}
Let $x_i=(\wp_{i1},...,\wp_{id_i})\in ({\mathbb{P}}^{3\vee})^{\times d_i},\ \ \ i=1,2,$ be two collections of hyperplanes in $\p3$, and set $\ell_{ij}:=\wp^2_{i1}\cap\wp_{2j}$, $\ell_{ij}:=\wp_{i1}\cap \wp_{j2},\ 1\le i\le d_1,\ 1\le j\le d_2$. 
It is clear that, for a general choice of the points $x_i\in({\mathbb{P}}^{3\vee})^{\times d_i},\  i=1,2$, the curve
$C=\underset{i,j=1}{\overset{d_1,d_2}{\cup}}\ell_{ij}$ satisfies the statement of Lemma. We re-enumerate the lines $\ell_{ij}$ as the lines $\ell_i$ in (\ref{C decomposable}).

Next, the sheaf $L_0$ in (\ref{polystable}) is polystable as a direct sum of stable $\op3(1)$-sheaves $\calo_{\ell_i}(-1)$. Hence, it is $\op3(1)|_C$-semistable.

For the second item, set $C_1:=\ell_1$ and, for $2\le k\le d_1d_2-1$, consider a sub-curve
$C_k:=\underset{i=1}{\overset{k}{\cup}}\ell_i$ of $C$ and a subscheme $D_k:=\ell_{k}\cap C_{k-1}$ of $\ell_{k}$. Since
$C\in\calh$, it follows that $D_k$ is a reduced divisor on $C_k$; let, say, $D_k=a_{k1}+...+a_{km_k}$. Consider a sequence of 
$\calo_{C_k}$-sheaves $L_{k}$, where $L_{1}:=\calo_{\ell_1}(-1)$ and for $2\le k\le d_1d_2-1$ the sheaf 
$L_k$ is defined inductively as an extension
\begin{equation}\label{Lk}
0\to L_{k-1}\to L_k\to\calo_{\ell_k}(-1)\to0.
\end{equation}
Each such extension is given by an element of the group
$\mathrm{Ext}^1(\calo_{\ell_k}(-1),L_{k-1})$,
and an easy calculation (cf. \cite[proof of Lemma 4]{Esteves}) shows that
\begin{equation}
\mathrm{Ext}^1(\calo_{\ell_k}(-1),L_{k-1})= 
H^0(\calo_{D_k})=\underset{i=1}{\overset{m_k}{\bigoplus}}\mathbf{k}_{a_{ki}}\simeq\mathbb{A}^{m_k}.
\end{equation}
Furthermore, for any point 
$\tau_k\in\mathbb{A}^{m_k*}:=\{(t_1,...,t_{m_k})\in \mathbb{A}^{m_k}\ |\ t_j\ne0,\ j=1,..., m_k\}$, 
the sheaf $L_k$ is locally free at the points of $D_k$. Hence the last extension $L_{d_1d_2}=L_{d_1d_2}(\mathbf{t})$ 
in the sequence (\ref{Lk}) defined by the element
$\mathbf{t}=(\tau_1,...,\tau_{d_1d_2})\in\mathbf{A}^*:= \mathbb{A}^{m_1*}\times...\times\mathbb{A}^{m_{d_1d_2}*}$
is a locally free $\calo_C$-sheaf. In addition, from (\ref{Lk}) it follows immediately that, for this point $\mathbf{t}$, the sequence of sheaves $0\subset L_1\subset L_2\subset...\subset L_{d_1d_2}(\mathbf{t})$ is a Jordan-H\"older filtration
of $L_{d_1d_2}(\mathbf{t})$ with the associated graded sheaf $L_0$ in (\ref{polystable}). Thus,
$$
[L_0]_S=[L_{d_1d_2}(\mathbf{t})]_S.
$$
Hence, since $L_{d_1d_2}(\mathbf{t})$ is a locally free
$\calo_C$-sheaf, it follows that, in the notation of 
Lemma \ref{Lemma 4.1},
\begin{equation}
[L_0]_S\in\mathbb{M}^0.
\end{equation}

Now, recall the construction of the moduli space $\mathbb{M}$ containing $\mathbb{M}^0$ as a closed subscheme (see, e. g., \cite[Thm 4.3.7]{HL}). Namely, $\mathbb{M}$ is obtained as a GIT-quotient $p:R\to R/\!\!\!\!/GL(N)=\mathbb{M}$ for an appropriately chosen open subset $R$ of the Quot-scheme $\mathrm{Quot}_{\calz/\calh}(\mathcal{V},P)$, where $P$ is the Hilbert polynomial defined in (\ref{hilb poly C}), $\mathcal{V}=\calo_{\calz/\calh}(-m)^{\oplus N},\ N= P(m)$ and $m$ large enough. Now, for the point 
$s=[L_0]_S\in\mathbb{M}^0$, there exists a point 
$[\rho:\mathcal{V}_s\twoheadrightarrow L_0]\in R$ such that $p([\rho])=s$. Consider the closed subscheme 
$R_0=p^{-1}(\mathbb{M}^0)$ of $R$. Since $R_0$ is quasi-projective over $\mathbb{M}^0$ and $\mathbb{M}^s$
is open dense in $\mathbb{M}^0$, it follows that there
exists a curve $Y$ in $R_0$ passing through the point
$0=[\rho]$ and such that
$$
Y^*=Y\setminus \{0\}\subset p^{-1}(\mathbb{M}^s).
$$
By the definition of $\mathbb{M}^s$ the last inclusion 
means that the the universal quotient sheaf on $R_0\times_{\calh}\calz$ being restricted onto
$\calz_Y=Y\times_{\calh}\calz$ becomes a sheaf $\mathbf{L}$ such that, for $t\in Y^*$,   
the sheaf $\mathbf{L}|_{C_t}$ is locally free, where
$C_t=pr^{-1}(f(t))$, and $f=p|_Y:Y\to\calh$ and
$pr:\calz\to\calh$ is the projection. Besides, by the above, $\mathbf{L}|_{C_0}\simeq L_0$.
\end{proof}

\begin{lemma}\label{Lemma 5.1b}
For any $c\ge0$, $1\le d_1\le d_2$, and any $[F]\in\cali(c)$, there exists a curve $C\in\calh$ satisfying the properties of Lemma \ref{Lemma 5.1}, and, in addition, the following ones:
\begin{equation}\label{E trivial}
F|_{\mathbb{P}^1_i}\simeq2\calo_{\mathbb{P}^1_i},\ \ \ 1\le i\le d_1d_2 ~;
\end{equation}
\begin{equation}\label{more Lt}
([F],C_t,[\mathbf{L}|_{C_t}])\in (\cali(c)\times\mathbb{J})_e,\ \ \ t\in Y^*.
\end{equation}
\end{lemma}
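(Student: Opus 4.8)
The plan is to obtain the two additional properties in turn: first to arrange (\ref{E trivial}) by choosing the lines $\ell_i$ of Lemma \ref{Lemma 5.1} so that they avoid the jumping lines of $F$, and then to deduce (\ref{more Lt}) from a semicontinuity argument applied to the degenerating family $(Y,f,\mathbf{L})$ of Lemma \ref{Lemma 5.1}. The point linking the two is that (\ref{E trivial}) is exactly what forces the relevant cohomology to vanish on the central fibre $C_0=C$.

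For (\ref{E trivial}), recall that $F$ is $\mu$-stable with $c_1(F)=0$ (for $c=0$ one has $F\simeq2\cdot\op3$ and there is nothing to prove), so its generic splitting type on lines is trivial and, by the Grauert--M\"ulich theorem, the jumping lines of $F$ form a proper closed subset $J_F$ of the Grassmannian $\mathbb{G}$ of lines in $\p3$. In the construction of Lemma \ref{Lemma 5.1} each line is of the form $\ell_{ij}=\wp_i\cap\wp'_j$ for hyperplanes $\wp_i,\wp'_j$, and the map $\mathbb{P}^{3\vee}\times\mathbb{P}^{3\vee}\to\mathbb{G}$, $(\wp,\wp')\mapsto\wp\cap\wp'$ (defined where $\wp\ne\wp'$), is surjective, since every line is the intersection of two distinct hyperplanes containing it. Hence for each fixed pair $(i,j)$ the condition $\ell_{ij}\notin J_F$ is a nonempty open condition on the tuple of hyperplanes; intersecting these finitely many open conditions with the dense open set of admissible tuples produced in Lemma \ref{Lemma 5.1} still leaves a dense open set. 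Choosing the hyperplanes there yields a curve $C=\bigcup_i\ell_i$ as in (\ref{C decomposable}) with $F|_{\ell_i}\simeq2\calo_{\ell_i}$ for all $i$, which is (\ref{E trivial}); one then runs the rest of Lemma \ref{Lemma 5.1} to produce $Y$, $f$ and $\mathbf{L}$ with $\mathbf{L}|_{C_0}\simeq L_0$ as in (\ref{isom L0}) and $\mathbf{L}|_{C_t}$ locally free for $t\in Y^*$ as in (\ref{Lt}).

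To prove (\ref{more Lt}) I would compute cohomology on the central fibre and then spread out. Writing $\iota_i:\ell_i\hookrightarrow C$ for the inclusions, one has $L_0=\bigoplus_i\iota_{i*}\calo_{\ell_i}(-1)$ by (\ref{polystable}), whence $H^\bullet(C,L_0)=\bigoplus_iH^\bullet(\ell_i,\calo_{\ell_i}(-1))=0$ and in particular $h^0(L_0)=h^1(L_0)=0$. Moreover the projection formula together with (\ref{E trivial}) gives $F\otimes L_0(2)=\bigoplus_i\iota_{i*}\bigl(F|_{\ell_i}\otimes\calo_{\ell_i}(1)\bigr)=\bigoplus_i\iota_{i*}\bigl(2\calo_{\ell_i}(1)\bigr)$, so that $h^1(F\otimes L_0(2))=0$. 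Since $\mathbf{L}$ is flat over $Y$ and $F$ is locally free, both $\mathbf{L}$ and $\mathbf{L}\otimes\pr_{\p3}^*F(2)$ are flat over $Y$; by semicontinuity the loci in $Y$ where $h^0(\mathbf{L}|_{C_t})=0$ and where $h^1(F\otimes\mathbf{L}|_{C_t}(2))=0$ are open and, by the computation above, contain $0$. Shrinking $Y$ to a neighbourhood of $0$, we may therefore assume that $h^0(\mathbf{L}|_{C_t})=h^1(\mathbf{L}|_{C_t})=0$ and $h^1(F\otimes\mathbf{L}|_{C_t}(2))=0$ for every $t\in Y^*$, the equality $h^0=h^1$ being forced by the constancy of $\chi=0$. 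For such $t$ the curve $C_t$ is smooth and $\mathbf{L}|_{C_t}$ is an invertible sheaf of the correct Hilbert polynomial, so the first two vanishings give $(C_t,[\mathbf{L}|_{C_t}])\in\mathbb{J}$; and since the only additional open condition cutting out $(\cali(c)\times\mathbb{J})_e$ inside $\cali(c)\times\mathbb{J}$ is precisely $h^1(F\otimes L(2))=0$ — from which properties (iii) and (iv) of Lemma \ref{Lemma 4.2} follow by the no-fixed-point argument given there — the last vanishing yields $([F],C_t,[\mathbf{L}|_{C_t}])\in(\cali(c)\times\mathbb{J})_e$ for all $t\in Y^*$, which is (\ref{more Lt}).

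The main obstacle I anticipate lies in the first step: because the lines $\ell_{ij}$ are not free but constrained to lie on the chosen hyperplanes (each $\wp_i$ meets all of the $\wp'_j$), one cannot simply count dimensions in $\mathbb{G}^{d_1d_2}$, and must instead exploit the surjectivity of the intersection map one line at a time, checking compatibility with the general-position conditions already imposed in Lemma \ref{Lemma 5.1}. The computation on $C_0$ is then routine, but it is worth stressing that it is exactly property (\ref{E trivial}) that makes $h^1(F\otimes L_0(2))$ vanish, so the two new conditions are genuinely linked rather than independent; for $c=0$ both simplify at once, since $J_F=\emptyset$ and $(\cali(0)\times\mathbb{J})_e=\mathbb{J}$.
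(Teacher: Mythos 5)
Your proposal is correct and follows essentially the same route as the paper: Grauert--M\"ullich to make $F$ trivial on each line $\ell_{ij}$ for a general choice of hyperplanes, and then a direct cohomology computation on the central fibre $C_0$ (using $F\otimes L_0(2)\simeq\bigoplus_i 2\calo_{\ell_i}(1)$) combined with flatness and semicontinuity to propagate the defining conditions of $(\cali(c)\times\mathbb{J})_e$ to all $t\in Y^*$ after shrinking $Y$. The one cosmetic point is that condition (iii) of Lemma \ref{Lemma 4.2} for $t\in Y^*$ is best obtained by verifying it directly at $C_0$ (a generic pair of sections of $\calo_{\ell_i}(1)$ has no common zero, so generic homomorphisms $F\to L_0(2)$ are surjective) and then invoking openness, rather than via the 't~Hooft no-fixed-point argument, which relies on a special structure of $F$; this direct verification at $C_0$ is exactly what the paper does.
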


\begin{proof}
Both (\ref{E trivial}) and (\ref{more Lt}) are immediate when $c=0$.

Since every instanton bundle of charge $c>0$ is stable, there is, by the Grauert--M\"ullich Theorem (see \cite{B}, \cite{HL}), a divisor $D_{F}$ in the Grassmannian $Gr=Gr(1,\p3)$ such that, for any line $\p1\in Gr\setminus D_{F}$, $F|_{\mathbb{P}^1_i}\simeq2\calo_{\mathbb{P}^1_i}$. Thus, for a general choice of the points $x_i\in({\mathbb{P}}^{3\vee})^{\times d_i},\  i=1,2$,
the condition (\ref{E trivial}) above holds.

From (\ref{E trivial}) and (\ref{polystable}) it follows immediately that 
\begin{equation}\label{property of L0}
([F],C_0,[L:=\mathbf{L}|_{C_0}])\ \textit{satisfies the statements (iii) and (iv) of Lemma \ref{Lemma 4.2}}.
\end{equation}
Since $\mathbf{L}$ is flat over $Y$, the rest is clear by semicontinuity (after possibly shrinking the curve $Y$).
\end{proof}

In the notation of Lemma \ref{Lemma 5.1}, let $p_Y:\calz_Y=\calz\times_{\calh}\calz\to Y$ and
$\boldsymbol{\iota}_Y:\calz\hookrightarrow Y\times\p3$ be the natural projections; let 
$\boldsymbol{F}:=F\boxtimes\calo_Y$, 
$\boldsymbol{\iota}_{Y*}\mathbf{L}(2):=\boldsymbol{\iota}_{Y*}\mathbf{L}\otimes\calo_{\p3}(2)\boxtimes\calo_Y$,
and $\boldsymbol{\tau}_Y:=\inhom_{\calz_Y/Y}(\boldsymbol{F},\boldsymbol{\iota}_{Y*}\mathbf{L}(2))$. 
In addition, consider the projections $\mathbf{p}_Y:{\mathbb{P}}\boldsymbol{\tau}_Y\to Y$, and
$\mathbf{p}_Y:{\mathbb{P}}\boldsymbol{\tau}_Y^*:=\calp\boldsymbol{\tau}_Y\times_YY^*\to Y^*$;
by construction, $\mathbf{p}_Y$ is a projective bundle over $Y$ such that,
\begin{equation}\label{pY}
\mathbf{p}_Y^{-1}(t)=\mathbb{P}(\mathrm{Hom}(F,L_t(2))),\ \ \ 
\mathrm{where}\ L_t:=\mathbf{L}|_{C_t},\ \ \ t\in Y.
\end{equation}
By Lemma \ref{Lemma 5.1b}, one has a morphism
$$
\psi:\ Y^*\to\calb=(\cali(c)\times\mathbb{J})_e,\ \ \ 
t\mapsto([F],C_t,[L_t]),
$$
and from (\ref{fibre of pB}) and (\ref{pY}) it follows
that
\begin{equation}\label{Ptau Y}
\calp\boldsymbol{\tau}_Y^*=\mathbb{P}\boldsymbol{\tau}
\times_{\calb}Y^*.
\end{equation}
Moreover, using Lemma \ref{Lemma 4.2},(iii)-(v), consider
the open dense subset $\widetilde{W}_Y$ of $\calp\boldsymbol{\tau}_Y$ defined as
\begin{equation}\label{tilde WY}
\widetilde{W}_Y:=\{([F],C_t,[L_t],\mathbf{k}\varphi)\ |\ t\in Y,\ \mathbf{k}\varphi\in\mathbb{P}\mathrm{Hom}
(F,C_t,L_t)_e\}.
\end{equation}
Comparing (\ref{tilde WY}) with (\ref{W}) and
using (\ref{Ptau Y}) we obtain the relation
\begin{equation}\label{tilde WY*}
\widetilde{W}_{Y^*}:=\widetilde{W}_{Y}\times_YY^*=
\widetilde{W}(d_1,d_2,c)\times_{\calb}Y^*
\overset{p_W}{\to}\widetilde{W}(d_1,d_2,c).
\end{equation}
On the other hand, consider the morphism
\begin{equation}\label{fY}
f_Y:\ \widetilde{W}_Y\to\call(d_1d_2+c):\ \ \ 
\mathbf{w}=([F],C_t,[L_t],\mathbf{k}\varphi)\mapsto
[E(\mathbf{w})=\ker(\varphi:F\twoheadrightarrow L_t(2))].
\end{equation}
From (\ref{f}) and (\ref{tilde WY*}) it follows that
$f_Y|_{\widetilde{W}_{Y^*}}$ coincides with the composition
$$
\widetilde{W}_{Y^*}
\overset{p_W}{\to}\widetilde{W}(d_1,d_2,c)\overset{f}
{\to}\call(d_1d_2+c).
$$
In view of Theorem \ref{Thm 4.6} this implies that
$f_Y(\widetilde{W}_{Y^*})\subset
\overline{\mathcal{C}(d_1,d_2,c)}$.
Since $\overline{\mathcal{C}(d_1,d_2,c)}$ is projective,
this implies that also
$$
f_Y(\widetilde{W}_Y)\subset\overline{\mathcal{C}(d_1,d_2,c)}.
$$
In particular, since by (\ref{tilde WY})
$$
\mathbb{P}\mathrm{Hom}(F,C_0,L_0)_e=
(\mathbf{p}_Y|_{\widetilde{W}_Y})^{-1}(0),
$$
where $C_0$ is the curve $C$ defined in (\ref{C decomposable}), and $\overline{\mathcal{C}(d_1,d_2,c)}$ is a projective scheme, we obtain the following result.

\begin{theorem}\label{Thm 5.2}
In the conditions and notation of Lemma \ref{Lemma 5.1} and of Lemma \ref{Lemma 5.1b}, there is a morphism
\begin{equation}\label{f0}
\begin{split}
& f_0=f_Y|_{\mathbb{P}\mathrm{Hom}(F,C_0,L_0)_e}: \mathbb{P} \mathrm{Hom}(F,C_0,L_0)_e \to\overline{\mathcal{C}(d_1,d_2,c)}: \\  
& ([F],C_0,[L_0],\mathbf{k}\varphi) \mapsto [\ker(\varphi:F\twoheadrightarrow L_0(2))].
\end{split}
\end{equation}
\end{theorem}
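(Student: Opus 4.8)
The plan is to recognize $f_0$ as the restriction of the morphism $f_Y$ of (\ref{fY}) to the central fibre of $\mathbf{p}_Y\colon\mathbb{P}\boldsymbol{\tau}_Y\to Y$, and then to verify two things: that $f_Y$ is a genuine morphism defined on all of $\widetilde{W}_Y$, and that its image is forced into the projective subscheme $\overline{\mathcal{C}(d_1,d_2,c)}$. Granting the identification $\mathbb{P}\mathrm{Hom}(F,C_0,L_0)_e=(\mathbf{p}_Y|_{\widetilde{W}_Y})^{-1}(0)$ already observed above, the set-theoretic map $f_0$ is nothing but $f_Y$ restricted to this fibre, so everything reduces to producing $f_Y$ and locating its image.

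To see that $f_Y$ is a morphism I would build a flat family of kernels over $\widetilde{W}_Y$. On $\mathbb{P}\boldsymbol{\tau}_Y$ there is a tautological family of homomorphisms from the pullback of $\boldsymbol{F}$ to a twist of $\boldsymbol{\iota}_{Y*}\mathbf{L}(2)$; restricting to the open locus $\widetilde{W}_Y$ of (\ref{tilde WY}) where it is fibrewise an epimorphism, its kernel $\mathcal{E}$ is coherent on $\widetilde{W}_Y\times\p3$ with fibres $E(\mathbf{w})$. Since $\boldsymbol{F}$ is locally free and $\boldsymbol{\iota}_{Y*}\mathbf{L}(2)$ is flat over $Y$ (because $\mathbf{L}$ is flat over $Y$ by Lemma \ref{Lemma 5.1}), the standard $\mathrm{Tor}$ vanishing argument shows $\mathcal{E}$ is flat over $\widetilde{W}_Y$. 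Each fibre is a torsion-free sheaf with $c_1=0$, $c_2=d_1d_2+c$, $c_3=0$; moreover it is $\mu$-stable, because any rank-$1$ subsheaf of $E(\mathbf{w})\subset F$ is a rank-$1$ subsheaf of the $\mu$-stable bundle $F$ and hence has negative slope. By corepresentability of the Gieseker--Maruyama functor, the flat family $\mathcal{E}$ induces the classifying morphism $f_Y$, and Lemma \ref{Lemma 4.2}(i) together with (\ref{standard tr}) twisted by $\calo_{\p3}(-2)$ keeps all fibres in the instanton stratum $\call(d_1d_2+c)$, exactly as for $f$ in (\ref{f}).

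It remains to pin down the image. The open subset $\widetilde{W}_{Y^*}=\widetilde{W}_Y\times_YY^*$ is dense in $\widetilde{W}_Y$, and by (\ref{tilde WY*}) one has $f_Y|_{\widetilde{W}_{Y^*}}=f\circ p_W$, whence $f_Y(\widetilde{W}_{Y^*})=f(p_W(\widetilde{W}_{Y^*}))\subseteq\overline{\mathcal{C}(d_1,d_2,c)}$ by Theorem \ref{Thm 4.6}. Since $f_Y$ is a morphism, hence continuous, and $\overline{\mathcal{C}(d_1,d_2,c)}$ is closed (indeed projective) in $\calm(d_1d_2+c)$, density yields $f_Y(\widetilde{W}_Y)\subseteq\overline{\mathcal{C}(d_1,d_2,c)}$. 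Restricting to the fibre over $0$ then gives the desired morphism $f_0$ with the stated formula.

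I expect the \emph{main obstacle} to be the behaviour of the family across the special point $0\in Y$. There $L_0$ is not locally free on the reducible curve $C_0$, so the central member $E(\mathbf{w}_0)$ may acquire mixed rather than purely one-dimensional singularities, and the JMT stability argument (which used irreducibility of $C$) no longer applies verbatim; my replacement is the subsheaf-of-$F$ argument above, which is insensitive to the geometry of $C_0$ and relies only on the $\mu$-stability of $F$ and the flatness of $\mathbf{L}$ from Lemma \ref{Lemma 5.1}. The residual difficulty is the case $c=0$, where $F=2\calo_{\p3}$ is only strictly semistable, so ruling out a trivial sub-line-bundle landing in $\ker\varphi$ at $\mathbf{w}_0$ (and thereby securing semistability of the central fibre) would require an extra general-position verification.
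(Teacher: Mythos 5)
Your proposal follows the paper's argument essentially verbatim: identify $f_0$ with the restriction of $f_Y$ to the fibre $(\mathbf{p}_Y|_{\widetilde{W}_Y})^{-1}(0)$ of $\mathbf{p}_Y$, use the factorization $f_Y|_{\widetilde{W}_{Y^*}}=f\circ p_W$ together with Theorem \ref{Thm 4.6} to place $f_Y(\widetilde{W}_{Y^*})$ inside $\overline{\mathcal{C}(d_1,d_2,c)}$, and conclude by density of $\widetilde{W}_{Y^*}$ in $\widetilde{W}_Y$ and projectivity of $\overline{\mathcal{C}(d_1,d_2,c)}$. The loose end you flag --- semistability of the central fibres, in particular for $c=0$ where $F=2\cdot\op3$ is only strictly semistable --- is genuine but closes exactly as you suspect: the paper itself defers it to the proof of Theorem \ref{intersection 1d}, where stability of $E(\mathbf{w})$ over $C_0$ is obtained via \cite[Lemma 4.3]{JMT} and an induction along the chain of elementary transformations in diagram (\ref{diagram2}).
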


Now, take a point
$$
\mathbf{w} := ([F],C_0,[L_0],\mathbf{k}\varphi)\in\mathbb{P}\mathrm{Hom}(F,C_0,L_0)_e
$$
and, as above, denote
$$
E(\mathbf{w}):=\ker(\varphi:F\twoheadrightarrow L_0).
$$
Next, using (\ref{polystable}), set
$$
L_{(k)}=\underset{i=1}{\overset{k}{\bigoplus}} \calo_{\ell_i}(-1),\ \ \ 1\le k\le d_1d_2,
$$
so that 
\begin{equation}\label{Ld1d2}
L_{(d_1d_2)}=L_0
\end{equation}
and we have splitting exact triples
\begin{equation}\label{triple L(k)}
0\to\calo_{\ell_k}(-1)\to L_{(k)} \overset{\varepsilon_{(k)}}{\to} L_{(k-1)}\to0,\ \ \ 
2\le k\le d_1d_2
\end{equation}
where each $\varepsilon_{(k)}$ is the projection onto a direct summand. Set $\varphi_{(d_1d_2)}:=\varphi$ and, using (\ref{Ld1d2}), define the epimorphisms $\varphi_{(k)}$ as the composition
$$
\varphi_{(k)}:\ F\overset{\varphi}{\to}
L_{(d_1d_2)}(2)\overset{\varepsilon_{d_1d_2}}{\to}
L_{(d_1d_2-1)}(2)\overset{\varepsilon_{d_1d_2-1}}{\to}...
\overset{\varepsilon_{(k)}}{\to}L_{(k)}(2),\ \ \ 
1\le k\le d_1d_2-1. 
$$
Set $E_{(k)}:=\ker\varphi_{(k)}$, so that, by the above,
\begin{equation}\label{Ed1d2=Ew}
E_{(d_1d_2)}=E(\mathbf{w})
\end{equation}
and, for $2\le k\le d_1d_2$, there is a commutative diagram
\begin{equation}\label{diagram2}
\xymatrix{
& & & 0\ar[d] & \\
& 0\ar[d] & & \calo_{\ell_k}(1)\ar[d] & \\
0\ar[r] & E_{(k)}\ar[r]\ar[d] &
F\ar[r]^-{\varphi_{(k)}}\ar@{=}[d] & L_{(k)}(2)\ar[r]\ar[d]^-{\varepsilon_{(k)}} & 0\\
0\ar[r] & E_{(k-1)}\ar[r]\ar[d]^-{\theta} &
F\ar[r]^-{\varphi_{(k-1)}} & L_{(k-1)}(2)\ar[r]\ar[d] & 0\\
& \calo_{\ell_k}(1) \ar[d] & & 0 &\\
& 0, & & &}
\end{equation}
in which the right vertical triple is the triple (\ref{triple L(k)}) twisted by the sheaf $\op3(2)$.

We are going to show, by induction on $k$, that
\begin{equation}\label{[Ek]}
[E_{(k)}]\in\overline{\cali(c+k)},\ \ \ 1\le k\le d_1d_2.
\end{equation}

First, in the case $k=1$ we have the exact triple
\begin{equation}\label{E_1}
0 \to E_{(1)} \to F \to L_{(1)}(2) \to 0
\end{equation}
which by \cite[Proposition 7.2]{JMT} yields (\ref{[Ek]}) for $k=1$.

Next, given $k\ge2$, assume that (\ref{[Ek]}) is true for $k-1$, i. e., in the diagram (\ref{diagram2}),
$$ [E_{(k-1)}]\in\overline{\cali(c+k-1)}. $$
This implies that there exists a curve $T$ with a marked point $0\in T$ and a sheaf $\mathbf{E}'$ on $\p3\times T$, flat over $T$ and such that
\begin{equation}\label{E'0=}
\mathbf{E}'|_{\p3\times\{0\}}\simeq E_{(k-1)}
\end{equation}
and, for $t\in T^*=T\smallsetminus\{0\}$, the sheaf $E'_t:=\mathbf{E}'|_{\p3\times\{t\}}$ is an instanton
bundle from $\cali(c+k-1)$:
\begin{equation}\label{E't}
[E'_t]\in\cali(c+k-1),\ \ \ t\in T^*.
\end{equation}
Now one easily sees that, after possibly shrinking the curve $T$, the epimorphism $\theta$ in the diagram (\ref{diagram2}) extends to an epimorphism $\boldsymbol{\Theta}:\ \mathbf{E}'\twoheadrightarrow\calo_{\ell_k}(1)\boxtimes\calo_T$, and we denote
$\mathbf{E}=\ker\boldsymbol{\Theta}$. By construction, an $\calo_{\mathbb{P}^3\times T}$-sheaf $\mathbf{E}$ is flat over $T$, so that, restricting the exact triple $0 \to \mathbf{E} \to \mathbf{E}' \to \calo_{\ell_k}(1)\boxtimes\calo_T \to 0$ onto $\p3\times\{t\},
\ t\in T,$ we obtain an exact triple 
\begin{equation}\label{triple Et}
0\to E_t\to E'_t\to\calo_{\ell_k}(1)\to0, 
\end{equation}
where $E_t=\mathbf{E}|_{\p3\times\{t\}}$. By \cite[Proposition 7.2]{JMT}, this triple together with (\ref{E't}) implies that 
\begin{equation}\label{Et in}
[E_t]\in\overline{\cali(c+k)},\ \ \ t\in T^*. 
\end{equation}
On the other hand, by the construction, the triple (\ref{triple Et}) for $t=0$ coincides with the left vertical triple in
(\ref{diagram2}), so that
\begin{equation}\label{E0=}
E_0\simeq E_{(k)}.
\end{equation}
Besides, in the case $c>0$, since $F$ is $\mu$-stable, the upper horizontal triple in diagram (\ref{diagram2}) easily shows that the sheaf $E_0$ is ($\mu$-)stable as well. When $c=0$ and $F=2\cdot\op3$ in diagram (\ref{diagram2}), we again proceed by induction on
$k$. For $k=1$, triple (\ref{E_1}) and \cite[Lemma 4.3]{JMT} implies that $E_{(1)}$ is stable. Now assume that $E_{(k-1)}$ is stable; the first column of diagram (\ref{diagram2}) immediately implies that $E_{(k)}$ is also stable, since any sheaf that would destabilize $E_{(k)}$ would also destabilize $E_{(k-1)}$.

Thus, in view of (\ref{Et in}) and (\ref{E0=}) we obtain a modular morphism 
$$ f:T\to\calm(c+k),\ t\mapsto[E_t]. $$
Since $\overline{\cali(c+k)}$ is closed in $\calm(c+k)$, it follows that $[E_{(k)}]=f(0)\in\overline{\cali(c+k)}$, i. e. we obtain (\ref{[Ek]}).

In particular, (\ref{Ed1d2=Ew}) and (\ref{[Ek]}) yield
$$ [E(\mathbf{w})]\in\overline{\cali(c+d_1d_2)}. $$
Since by construction $[E(\mathbf{w})]\in\im f_0$, it follows from Theorem \ref{Thm 5.2} that
$[E(\mathbf{w})]\in\overline{\cali(c+d_1d_2)}\cap \overline{\mathcal{C}(d_1,d_2,c)}$. 

Finally, note that each $E_{(k)}$, and hence $E_0$, is actually an instanton sheaf. Indeed, since $F$ is a locally free instanton sheaf, one easily checks from triple (\ref{E_1}) that $E_{(1)}$ is an instanton sheaf. Assuming that $E_{(k-1)}$ is an instanton sheaf, one can use the first column of diagram (\ref{diagram2}) to check that so is $E_{(k)}$.

Summing it all up, we obtain the following theorem.

\begin{theorem} \label{intersection 1d}
For any $c\ge0$ and $1\le d_1\le d_2$, 
$$ \overline{\cali(c+d_1d_2)}\cap \overline{\mathcal{C}(d_1,d_2,c)}\ne\emptyset. $$
In addition, the above intersection contains instanton sheaves.
\end{theorem}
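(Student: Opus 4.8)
The plan is to prove Theorem \ref{intersection 1d} by exhibiting, for fixed $c\ge0$ and $1\le d_1\le d_2$, a single sheaf $E(\mathbf{w})$ lying in both closures, and this is achieved by a degeneration argument that realizes the \emph{completely reducible} line bundle $L_0=\bigoplus_{i=1}^{d_1d_2}\calo_{\ell_i}(-1)$ on a degenerate curve $C_0=\bigcup\ell_i$ as a limit of genuine line bundles on smooth curves in $\calh$. The key structural input is the pair of preparatory lemmas \ref{Lemma 5.1} and \ref{Lemma 5.1b}, which produce a completely decomposable curve $C_0\in\calh$ together with a flat family $\mathbf{L}$ of sheaves over a base curve $Y$ (with marked point $0$) such that $\mathbf{L}|_{C_0}\simeq L_0$ while $\mathbf{L}|_{C_t}$ is locally free for $t\in Y^*$, and moreover the generic fibres $([F],C_t,[\mathbf{L}|_{C_t}])$ land in the good locus $(\cali(c)\times\mathbb{J})_e$ where the component construction of Theorem \ref{Thm 4.6} applies.

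The first main step is to spread the Hom-bundle construction out over $Y$. I would form the relative sheaf $\boldsymbol{\tau}_Y=\inhom_{\calz_Y/Y}(\boldsymbol{F},\boldsymbol{\iota}_{Y*}\mathbf{L}(2))$ and its projectivization $\mathbb{P}\boldsymbol{\tau}_Y\to Y$, whose fibre over $t$ is $\mathbb{P}(\mathrm{Hom}(F,L_t(2)))$. Restricting to $Y^*$ and comparing with Remark \ref{Remark 4.2} (via the identification \eqref{fibre of pB}) identifies $\widetilde{W}_{Y^*}$ with the base change $\widetilde{W}(d_1,d_2,c)\times_{\calb}Y^*$, so the universal kernel construction gives a morphism $f_Y:\widetilde{W}_Y\to\call(d_1d_2+c)$ whose restriction to $Y^*$ factors through $\widetilde{W}(d_1,d_2,c)$ followed by $f$. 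Since $f(\widetilde{W}(d_1,d_2,c))$ lands in the irreducible component $\overline{\mathcal{C}(d_1,d_2,c)}$ by Theorem \ref{Thm 4.6}, and that component is \emph{projective} (hence closed in $\calm$), continuity forces $f_Y(\widetilde{W}_Y)\subset\overline{\mathcal{C}(d_1,d_2,c)}$, including the special fibre over $0$. This is precisely Theorem \ref{Thm 5.2}, which places $[E(\mathbf{w})]\in\overline{\mathcal{C}(d_1,d_2,c)}$ for any $\mathbf{w}=([F],C_0,[L_0],\mathbf{k}\varphi)$.

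The second, and harder, main step is to show that the \emph{same} sheaf $E(\mathbf{w})$ lies in $\overline{\cali(c+d_1d_2)}$. Here the idea is to peel off the lines one at a time: writing $L_{(k)}=\bigoplus_{i=1}^{k}\calo_{\ell_i}(-1)$, the splitting triples \eqref{triple L(k)} yield a filtration of kernels $E_{(k)}=\ker\varphi_{(k)}$ fitting into the commutative diagram \eqref{diagram2}, whose left column is an elementary transformation $0\to E_{(k)}\to E_{(k-1)}\to\calo_{\ell_k}(1)\to0$ along a single line. I would then prove $[E_{(k)}]\in\overline{\cali(c+k)}$ by induction on $k$: the base case $k=1$ is the triple \eqref{E_1}, handled by \cite[Proposition 7.2]{JMT}; for the inductive step, the hypothesis $[E_{(k-1)}]\in\overline{\cali(c+k-1)}$ supplies a flat family $\mathbf{E}'$ over a curve $T$ degenerating an instanton bundle to $E_{(k-1)}$, and after shrinking $T$ the epimorphism $\theta$ extends to $\boldsymbol{\Theta}:\mathbf{E}'\twoheadrightarrow\calo_{\ell_k}(1)\boxtimes\calo_T$; taking $\mathbf{E}=\ker\boldsymbol{\Theta}$, the generic fibres are elementary transformations of honest instantons along a line, which \cite[Proposition 7.2]{JMT} again places in $\overline{\cali(c+k)}$, while the central fibre recovers $E_{(k)}$.

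The main obstacle is this inductive step, specifically the extension of $\theta$ to $\boldsymbol{\Theta}$ over the family and the verification that the resulting $\mathbf{E}$ stays flat and $T$-fibrewise stable, so that it defines an honest modular morphism $T\to\calm(c+k)$ landing in $\overline{\cali(c+k)}$ at $t=0$. Flatness over $T$ of the kernel of a surjection onto a sheaf supported on the fixed curve $\ell_k$ must be checked, and stability of the special fibre $E_{(k)}$ requires a separate argument: for $c>0$ it follows from $\mu$-stability of $F$ via the top row of \eqref{diagram2}, but for $c=0$ (where $F=2\cdot\op3$) I would run a parallel induction, using \cite[Lemma 4.3]{JMT} for the base case and the first column of \eqref{diagram2} for the step, noting that any destabilizing subsheaf of $E_{(k)}$ would descend to destabilize $E_{(k-1)}$. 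Once $[E(\mathbf{w})]=[E_{(d_1d_2)}]$ is shown to lie in $\overline{\cali(c+d_1d_2)}$, combining with Theorem \ref{Thm 5.2} gives a point in the intersection; the final remark that each $E_{(k)}$ is an instanton sheaf follows by yet another short induction using \eqref{E_1} and the first column of \eqref{diagram2}, establishing that the intersection contains instanton sheaves.
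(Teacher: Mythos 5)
Your proposal is correct and follows essentially the same route as the paper: Lemmas \ref{Lemma 5.1} and \ref{Lemma 5.1b} plus the relative Hom-bundle over $Y$ and projectivity of $\overline{\mathcal{C}(d_1,d_2,c)}$ to get Theorem \ref{Thm 5.2}, then the line-by-line elementary transformations of diagram (\ref{diagram2}) with \cite[Proposition 7.2]{JMT} and induction on $k$ to place $E(\mathbf{w})$ in $\overline{\cali(c+d_1d_2)}$, including the separate stability argument for $c=0$ via \cite[Lemma 4.3]{JMT}. The concerns you flag (extension of $\theta$ over the family, flatness of the kernel, fibrewise stability) are exactly the points the paper addresses, in the same way.
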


\section{Intersection of $\overline{\mathcal{C}(d_1,d_2,c)}$ with the Ein components}\label{n cap c}

For any three integers $c>b\ge a\ge0$, consider the monad
\begin{equation}\label{ein monads}
0\to \op3(-c) \stackrel{\alpha}{\longrightarrow} \op3(-b) \oplus \op3(-a) \oplus \op3(a) \oplus \op3(b)
\stackrel{\beta}{\longrightarrow} \op3(c) \to 0 ,
\end{equation}
with morphisms given by
\begin{equation}\label{alpha beta}
\alpha = \left( \begin{array}{c} \sigma_4 \\ \sigma_3 \\ -\sigma_2 \\ -\sigma_1 \\\end{array} \right)
~~{\rm and}~~ \beta=
\left( \begin{array}{cccc} \sigma_1 & \sigma_2 & \sigma_3 & \sigma_4 \end{array} \right)
\end{equation}
where
$$ \sigma_1\in H^0(\op3(c+b)) ~~,~~ \sigma_2\in H^0(\op3(c+a)) $$
$$ \sigma_3\in H^0(\op3(c-a)) ~~,~~ \sigma_4\in H^0(\op3(c-b)) $$
do not vanish simultaneously. Ein showed in \cite[Proposition 1.2(a)]{Ein} that the cohomology of such a monad is stable if and only if $c>a+b$; in this case, there exists an irreducible component $\caln(a,b,c)$ of $\calb(c^2-b^2-a^2)$ whose generic point corresponds to a locally free sheaf given as cohomology of (\ref{ein monads}). Such components are called \emph{Ein components}.

Let $\overline{\caln(a,b,c)}$ denote the closure of $\caln(a,b,c)$ within $\calm(c^2-b^2-a^2)$. The main goal of this section is to establish the following result.

\begin{proposition} \label{N inter C}
The components $\overline{\caln(0,b,c)}$ and $\overline{\mathcal{C}(c-b,c+b,0)}$ intersect within $\calm(c^2-b^2)$ along a subvariety of codimension $1+(c^2-b^2)(c-2)$ in $\overline{\mathcal{C}(c-b,c+b,0)}$.
\end{proposition}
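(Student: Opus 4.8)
The plan is to realise every point of the intersection as a flat limit of Ein bundles that degenerates to a non locally free sheaf with one-dimensional singularities along the complete intersection curve cut out by two of the four defining sections, and then to extract the codimension from the fact that such limits pin down a single line bundle in the relative Jacobian underlying $\overline{\mathcal{C}(c-b,c+b,0)}$.

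First I would set up the degeneration of the monad (\ref{ein monads})--(\ref{alpha beta}) with $a=0$, whose middle term is $\op3(-b)\oplus 2\cdot\op3\oplus\op3(b)$. I replace the two central sections by $\sigma_2=t\tau_2$, $\sigma_3=t\tau_3$ with generic $\tau_2,\tau_3\in H^0(\op3(c))$ and $t\in\mathbb{A}^1$, while keeping $\sigma_1\in H^0(\op3(c+b))$ and $\sigma_4\in H^0(\op3(c-b))$ fixed and forming a regular sequence. For $t\neq0$ the cohomology $E_t$ is a stable Ein bundle in $\caln(0,b,c)$ (since $c>b=a+b$, Ein's criterion applies). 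At $t=0$ the map $\beta_0=(\sigma_1,0,0,\sigma_4)$ has image $I_C(c)$ and cokernel $\calo_C(c)$, where $C=\{\sigma_1=\sigma_4=0\}$ is a reduced complete intersection curve of bidegree $(c-b,c+b)$, a point of $\calh_{c-b,c+b}$; dually, the Koszul exactness of $(\sigma_1,\sigma_4)$ collapses the outer summands $\op3(\pm b)$, so that the reflexive hull of the limit is $2\cdot\op3$ (instanton charge $0$).

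The technical heart, and the step I expect to be the \emph{main obstacle}, is the precise identification of the flat limit $E_0:=\lim_{t\to0}E_t$ inside $\calm(c^2-b^2)$. I would form the relative cohomology sheaf $\mathcal{E}$ of the family of monads over $\p3\times\mathbb{A}^1$, quotient out its $\calo_{\mathbb{A}^1}$-torsion (supported on $C\times\{0\}$) to obtain the flat extension, and then show that its fibre over $t=0$ sits in the sequence
$$0\to E_0\to 2\cdot\op3\xrightarrow{\ (\tau_2,\tau_3)|_C\ }\calo_C(c)\to 0.$$
This exhibits $E_0$ as a sheaf of the type produced in Section \ref{1dsing}: $E_0^{\vee\vee}=2\cdot\op3$, the singular quotient $Q_{E_0}$ is the line bundle $\calo_C(c)$ on $C$, and the associated line bundle is $L:=\calo_C(c-2)$, of degree $(c-2)(c^2-b^2)=p_a(C)-1$ as demanded by (\ref{d,pa,H}). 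Through the map $f$ of Theorem \ref{Thm 4.6} this places $E_0$ in $\overline{\mathcal{C}(c-b,c+b,0)}$ (note $[L]$ has sections, hence lies on the theta divisor $\Theta$, in $\mathrm{Pic}^{g-1}(C)\setminus U_C$, i.e. in the closure of the Jacobian locus of $\mathbb{J}$); being a flat limit of Ein bundles it lies in $\overline{\caln(0,b,c)}$. The delicate part is the torsion computation identifying the limit, and the check that this is the genuine torsion-free, stable limit rather than the naive restriction $2\cdot\op3$.

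It remains to read off the codimension, where the geometry is clean. By construction the singular quotient of any Ein limit is the cokernel $\calo_C(c)$ of $\beta_0$, a restriction of $\op3(c)$; hence along the intersection the line bundle $L$ is forced to be the single point $\calo_C(c-2)\in\mathrm{Pic}^{g-1}(C)$, whereas over the generic point of $\overline{\mathcal{C}(c-b,c+b,0)}$ the bundle $L$ sweeps out the whole fibre $\mathrm{Pic}^{g-1}(C)$ of the relative Jacobian. Thus the intersection is the $f$-image of a section $C\mapsto(C,\calo_C(c-2))$ of this Jacobian fibration, and deforming $L$ off the section moves the sheaf out of $\overline{\caln(0,b,c)}$; concretely, the normal directions to the intersection inside $\overline{\mathcal{C}(c-b,c+b,0)}$ are exactly the Picard deformations $T_{[L]}\mathrm{Pic}^{g-1}(C)\cong H^1(\calo_C)$. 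Since a section has codimension equal to the fibre dimension, this gives codimension $h^1(\calo_C)=p_a(C)=1+(c^2-b^2)(c-2)$. I would then confirm this numerically, matching the remaining $(C,\varphi)$-parameters against $\dim\overline{\mathcal{C}(c-b,c+b,0)}-p_a(C)$ via the dimension formula (\ref{dim W}) and the Ext computation of Proposition \ref{prop 4.5}, taking care of the jump in the fibre of $f$ over the special bundle $\calo_C(c-2)$.
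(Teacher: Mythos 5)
Your proposal follows essentially the same route as the paper's proof: degenerate the Ein monad with $a=0$ by scaling the two middle sections by $t$, identify the limit as $\ker\bigl(2\cdot\op3\to\calo_\Gamma(c)\bigr)$ with $\Gamma=\{\sigma_1=\sigma_4=0\}$ a complete intersection of type $(c-b,c+b)$, and read off the codimension as $p_a(\Gamma)=1+(c^2-b^2)(c-2)$ from the fact that the intersection pins the line bundle to $\calo_\Gamma(c-2)\in\mathrm{Pic}^{g-1}(\Gamma)$ while it sweeps out the whole Jacobian fibre over the generic point of $\overline{\mathcal{C}(c-b,c+b,0)}$. The one substantive difference is where you insert the parameter $t$: you scale $\sigma_2,\sigma_3$ in both $\alpha$ and $\beta$, so at $t=0$ the naive monad cohomology collapses to $2\cdot\op3$ and you are forced into the flat-limit/torsion analysis that you rightly flag as the main obstacle. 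The paper instead scales only the middle entries of $\alpha$, keeping $\beta=(\sigma_1,\sigma_2,\sigma_3,\sigma_4)$ fixed; then $\ker\beta/\im\alpha_0$ is already the correct torsion-free limit, and the triple $0\to E_0\to 2\cdot\op3\to\calo_\Gamma(c)\to0$ drops out of a short exact sequence of complexes whose subcomplex is the twisted Koszul complex of $(\sigma_1,\sigma_4)$, with stability of $E_0$ supplied by \cite[Lemma 4.3]{JMT} --- no torsion to remove. Your claimed limit is nonetheless correct: conjugating your family by $\mathrm{diag}(1,t,t,1)$ identifies it, for $t\ne0$, with the paper's family at parameter $t^2$, so you could quote that reduction instead of carrying out the torsion computation. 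Your codimension count is the same heuristic the paper uses (you are in fact more explicit than the paper about the caveat that $h^0(L(2))$ jumps at the special line bundle, which genuinely matters for $c\ge4$). One small point worth adding: $\calo_\Gamma(c-2)$ is a theta-characteristic on $\Gamma$, which is the structural reason the limit lies in $\overline{\mathcal{C}(c-b,c+b,0)}\setminus\mathcal{C}(c-b,c+b,0)$, complementing your observation that $h^0(L)>0$ puts it on the theta divisor.
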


\begin{proof} 
Given a parameter $t\in\mathbb{A}^1$, consider the following family of monads
\begin{equation}\label{family monad}
0\to \op3(-c) \stackrel{\alpha_t}{\rightarrow} \op3(-b) \oplus 2\cdot\op3 \oplus \op3(b)
\stackrel{\beta}{\rightarrow} \op3(c) \to 0,
\end{equation}
where $\alpha_t$ is given by
$$ \alpha_t = \left( \begin{array}{c} \sigma_4 \\ t\cdot\sigma_3 \\ -t\cdot\sigma_2 \\ -\sigma_1 \end{array} \right) $$
while $\beta$ is given as in (\ref{alpha beta}). Clearly, for each $t\ne0$, the sheaf $E_t=:\ker\beta/\im\alpha_t$ defines a point $[E_t]\in\caln(0,b,c)$; we thus obtain a modular morphism
$$ \mathbb{A}^1\setminus\{0\} \to \calm(c^2-b^2),  ~~~~~~ t \mapsto [E_t] $$
whose image lies within $\caln(0,b,c)$.

Next, we show that $E_0$ fits into the following exact triple:
\begin{equation} \label{e0 sqc}
0 \to E_0 \to 2\cdot \op3 \to \calo_{\Gamma}(c) \to 0 ,
\end{equation}
where $\Gamma$ is the complete intersection curve defined by $\{\sigma_1=\sigma_4=0\}$; we may assume that $\Gamma$ is irreducible since this is an open condition. Note also that $\calo_{\Gamma}(c-2)$ is a theta-characteristic on $\Gamma$.

Indeed, consider the following short exact sequence of complexes:
$$ \xymatrix{
& 0 \ar[d] & 0 \ar[d] & 0 \ar[d] & \\
0 \ar[r] & \op3(-c) \ar[d]^{\simeq} \ar[r]^{\tilde{\alpha}} & \op3(-b) \oplus \op3(b) \ar[d] \ar[r]^{\tilde{\beta}} & \op3(c) \ar[d]^{\simeq} \ar[r] & 0 \\
0 \ar[r] & \op3(-c) \ar[d] \ar[r]^{\!\!\!\!\!\!\!\!\!\!\!\!\!\!\!\!\!\!\!\!\!\!\!\!\!\!\!\!\alpha_0} &
\op3(-b) \oplus 2\cdot \op3 \oplus \op3(b) \ar[d] \ar[r]^{~~~~~~~~~~~~\beta} &
\op3(c) \ar[d] \ar[r] & 0 \\
& 0 \ar[d] \ar[r] & 2\cdot \op3 \ar[d] \ar[r] & 0 \ar[d] & \\
& 0 & 0 & 0 &
} $$
where the complex in the middle line is (\ref{family monad}) for $t=0$, and the morphisms $\tilde{\alpha}$ and $\tilde{\beta}$ are given by
$$ \tilde{\alpha} = \left( \begin{array}{c} -\sigma_4 \\ \sigma_1 \\\end{array} \right)
~~{\rm and}~~ \beta= \left( \begin{array}{cccc} \sigma_1 & \sigma_4 \end{array} \right). $$
Passing to cohomology, we obtain precisely the triple (\ref{e0 sqc}).

By \cite[Lemma 4.3]{JMT}, (\ref{e0 sqc}) implies that $E_0$ is stable, hence it follows that $[E_0]\in\overline{\caln(0,b,c)}$. On the other hand, one clearly sees from (\ref{e0 sqc}) and from the definition of $\calc(c-b,c+b,0)$ in Theorem \ref{Thm 4.6} that $[E_0]\in\overline{\calc(c-b,c+b,0)}$. 

Finally, note that a generic point $[E]$ in the intersection
$$ \overline{\caln(0,b,c)}~\cap~\overline{\calc(c-b,c+b,0)} $$
will be precisely of the form (\ref{e0 sqc}), fixing the choice of a line bundle in $\mathrm{Pic}^{g-1}(\Gamma)$, where $g=1+(c^2-b^2)(c-2)$ is the genus of $\Gamma$. It is then easy to see that such sheaves form a family of codimension $g$ in $\overline{\calc(c-b,c+b,0)}$.
\end{proof}

\section{Connectedness of $\calm(2)$}\label{M(2)}

As mentioned at the Introduction, it is not difficult to check that $\calm(1)$ is irreducible; this fact is probably well known to specialists, but for lack of a suitable reference, we present a brief argument here.

The key point is to show that every semistable rank 2 sheaf $E$ on $\p3$ with $c_1(E)=0$, $c_2(E)=1$ and $c_3(E)=0$ is a nullcorrelation sheaf in the sense of \cite{Ein82}, that is, given by an exact sequence of the form
$$ 0 \to \op3(-1) \stackrel{\sigma}{\longrightarrow} \Omega^1_{\p3}(1) \to E \to 0. $$
It follows that $E$ is uniquely determine by the section $\sigma \in H^0(\Omega^1_{\p3}(2))$ up to scalar multiples, so that $\calm(1)\simeq\mathbb{P}H^0(\Omega^1_{\p3}(2))$.

Indeed, semistability implies that $h^0(E)=0$. If $E$ is locally free, then $E$ is stable and it follows from Barth's theory of spectra, see \cite{B2} or \cite[Section 7]{H-r}, that $E$ is an instanton bundle of charge 1, and these are precisely the locally free nullcorrelation sheaves. On the other hand, if $E$ is not locally free, then $E^{\vee\vee}$ is a $\mu$-semistable rank 2 reflexive sheaf with $c_1(E^{\vee\vee})=0$ and $c_2(E^{\vee\vee})=0,1$. If $c_2(E^{\vee\vee})=1$, so that $E$ has 0-dimensional singularities, then $E^{\vee\vee}$ must be stable by Lemma \ref{stability-lemma}; it follows from \cite[Lemma 2.1]{Chang} that $c_3(E^{\vee\vee})=0$, so $E^{\vee\vee}/E=0$, contradicting the hypothesis of $E$ not being locally free. Therefore we must have $c_2(E^{\vee\vee})=0$, hence $E^{\vee\vee}\simeq2\cdot\op3$ and
$$ 0 \to E \to 2\cdot\op3 \to \calo_\ell(1) \to 0 $$
where $\ell$ is a line. One can then check that $E$ satisfies the cohomological conditions of \cite[Proposition 1.1]{Ein82}, so that $E$ is a nullcorrelation sheaf.

\bigskip

Next, we recall the description of $\calm(2)$ given by Hartshorne \cite{H-vb}, Le Potier \cite{LeP} and Trautmann \cite{Tr}. 
By \cite[Section 9]{H-vb} the scheme $\calb(2)$ coincides with the instanton component $\cali(2)$ of dimension 13, so its closure $\overline{\cali(2)}$ is an irreducible component of $\calm(2)$. According to \cite[Thm. 7.12]{LeP}, $\calm(2)$ contains two additional irreducible components, which are given by the closures of the subschemes
$$ \calp(2)_l = \left\{ [E] \in \calm(2) ~|~ \dim\ext^2(E,\op3)=l \right\} ~~ l=1,2 $$
whithin $\calm(2)$; furthermore, $\dim \overline{\calp(2)_l}=13+4l$. Le Potier calls these the \emph{Trautmann components}.

Note that these actually coincides with the components $\overline{\calt(2,l)}$ described in Section \ref{0dsing} above. Indeed, note that if $[E]\in\calt(2,l)$, then 
$$ \dim\ext^2(E,\op3) = h^0(\inext^2(E,\op3)) = h^0(\inext^3(Q_E,\op3))=h^0(Q_E). $$
However, the length of $Q_E$ is half of $c_3(E^{\vee\vee})$, which means that $[E]\in\calp(2)_l$, thus $\calt(2,l)\subset\calp(2)_l$.

In addition, Chang proved in \cite[Section 2]{Chang} that, for each $l=1,2$, $\calr(0;2;2l)$ is irreducible, nonsingular of dimension 13. It follows from Theorem  \ref{0d comp's} that, for each $l=1,2$,  $\overline{\calt(2,l)}$ is an irreducible component of $\calm(2)$ of dimension $13+4l$; therefore, we must have that $\overline{\calt(2,l)}=\overline{\calp(2)_l}$.

Consequently, Le Potier's result can be restated in the following form, see also \cite{Tr}: 
\begin{equation} \label{full M(2)}
\calm(2)=\overline{\cali(2)}\cup\overline{\calt(2,1)}\cup\overline{\calt(2,2)}.
\end{equation}
The main goal of this section is to show that $\calm(2)$ is connected.

Recall from Section \ref{0dsing} that a generic sheaf $E$ from $\overline{\calt(2,1)}$ is obtained as the kernel of an epimorphism $\epsilon:\ F\twoheadrightarrow\calo_q$ where $F$ is a generic reflexive sheaf from $\calr(0;2;2)$ and
$q\not\in\sing(F)$:
\begin{equation}\label{R(0,2,2)}
0 \to E \to F \overset{\epsilon}{\to} \calo_q \to 0,\ \ \ q\not\in\sing(F).
\end{equation}
Every $[F]\in\calr(0;2;2)$ satisfies $h^0(F(1))=3$, cf. \cite[Table 2.8.1]{Chang}; moreover, the zero scheme $Y=(s)_0$ of a generic section $s\in H^0(F(1))$ is a disjoint union of a line $\ell$ and a nonsingular conic $C$ \cite[Lemma 2.7]{Chang}, i. e. there is an exact triple
\begin{equation}\label{F,M1}
0\to \op3(-1) \to F \to I_{Y/\p3}(1)\to 0,\ \ \ Y=\ell\sqcup C. 
\end{equation}

In addition, a generic sheaf $E$ from $\overline{\calt(2,4)}$ is obtained as the kernel of an epimorphism $\epsilon:\ F\twoheadrightarrow \calo_{q_1}\oplus\calo_{q_2}$ where $F$ is a generic reflexive sheaf from $\calr(0;2;4)$ with $q_1,q_2\not\in\sing(F)$, and $q_1\ne q_2$:
\begin{equation}\label{R(0,2,4)}
0\to E\to F\overset{\epsilon}{\to} \calo_{q_1}\oplus\calo_{q_2}\to0,\ \ \ q_1,q_2\not\in \sing(F),\ \ q_1\ne q_2.
\end{equation}
Every $[F]\in\calr(0;2;4)$ satisfies $h^0(F(1))=4$, cf. \cite[Table 2.12.2]{Chang}; the zero scheme $Y=(s)_0$ of a generic section $s\in H^0(F(1))$ is a nonsingular twisted cubic curve \cite[Lemma 2.13]{Chang}, i. e. there is an exact triple
\begin{equation}\label{F,M2}
0\to\op3(-1)\to E^{\vee\vee}\to I_{Y/\p3}(1)\to 0. 
\end{equation}

We are finally in position to prove the main result of this section.

\begin{theorem} \label{T inter I 2}
Both components $\overline{\calt(2,1)}$ and $\overline{\calt(2,2)}$ have nonempty intersection with the instanton component $\overline{\cali(2)}$. In particular, $\calm(2)$ is connected.
\end{theorem}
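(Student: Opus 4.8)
The plan is to read connectedness off from the decomposition (\ref{full M(2)}). Since $\calm(2)=\overline{\cali(2)}\cup\overline{\calt(2,1)}\cup\overline{\calt(2,2)}$ is a union of three irreducible components, it is enough to exhibit, for each $l=1,2$, a single sheaf $[E_0]\in\overline{\calt(2,l)}\cap\overline{\cali(2)}$; the instanton component then bridges the two Trautmann components and the whole scheme is connected.

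To produce such an $E_0$ I would collide the jumping lines of a charge-$2$ 't Hooft bundle. Recall that a generic instanton bundle of charge $2$ is a 't Hooft bundle $0\to\op3(-1)\to E\to I_Z(1)\to0$ with $Z=\ell_1\sqcup\ell_2\sqcup\ell_3$ three disjoint lines. I would move the lines in a one-parameter family $\{Z_t\}$ so that at $t=0$ some of them come together, and complete the resulting family $\{E_t\}_{t\ne0}\subset\cali(2)$ to a flat family over $\mathbb{A}^1$ using the properness of $\calm(2)$; its central fibre $E_0$ then lies in $\overline{\cali(2)}$. Chern classes are locally constant in flat families, so $c_1(E_0)=0$, $c_2(E_0)=2$, $c_3(E_0)=0$, while $E_0$ stays locally free away from the collision points. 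At a collision the flat limit $Z_0$ of the $Z_t$ acquires an embedded point, and there $E_0$ develops a $0$-dimensional singularity. Consequently $Q_{E_0}=E_0^{\vee\vee}/E_0$ is $0$-dimensional of some length $l$, whence $c_3(E_0^{\vee\vee})=2l$, and by Lemma \ref{stability-lemma} the hull $E_0^{\vee\vee}$ is stable, i.e. $[E_0^{\vee\vee}]\in\calr(0;2;2l)$.

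The two components are reached by two collision patterns. For $\overline{\calt(2,1)}$ I would collide just one pair, say $\ell_2$ with $\ell_3$, keeping $\ell_1$ disjoint: the pure part of $Z_0$ becomes a line disjoint from a nodal conic — a degeneration of the line-plus-conic of (\ref{F,M1}) — a single embedded point appears at the node, so $l=1$ and $c_3(E_0^{\vee\vee})=2$. For $\overline{\calt(2,2)}$ I would bring the three lines into a connected chain with two nodes, whose pure part degenerates the twisted cubic of (\ref{F,M2}); two embedded points appear, so $l=2$ and $c_3(E_0^{\vee\vee})=4$. In either case $E_0^{\vee\vee}\in\calr(0;2;2l)$, which by \cite{Chang} is irreducible, nonsingular of dimension $13$ (so $\ext^2(E_0^{\vee\vee},E_0^{\vee\vee})=0$), while $E_0=\ker(E_0^{\vee\vee}\twoheadrightarrow Q_{E_0})$ is precisely of the shape defining $\calt(2,l)$ in Section \ref{0dsing}. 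Hence $E_0$ is a limit of sheaves of type $\calt(2,l)$, and $[E_0]\in\overline{\calt(2,l)}$. This yields $[E_0]\in\overline{\calt(2,l)}\cap\overline{\cali(2)}$, as wanted.

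The delicate point, and the one I expect to require real work, is the precise identification of the flat limit. One must verify that the family of sheaves extends flatly across $t=0$, that the section cutting out $Z_t$ extends so that $E_0$ genuinely sits in $0\to\op3(-1)\to E_0\to I_{Z_0}(1)\to0$, and above all that the flat limit $Z_0$ of the colliding lines is the nodal curve together with exactly one (respectively two) embedded point(s). It is this last fact that pins down $\lt(Q_{E_0})=l$ and guarantees that $\sing(E_0)$ stays $0$-dimensional instead of producing a spurious $1$-dimensional component. Once this is in place, stability of $E_0$ follows from that of the $E_t$ by openness together with Lemma \ref{stability-lemma}, and the value of $c_3(E_0^{\vee\vee})$ can be confirmed either directly from $\lt(Q_{E_0})$ or from the arithmetic genus of the pure part of $Z_0$.
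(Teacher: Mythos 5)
Your proposal follows essentially the same route as the paper: both degenerate the three disjoint lines of a charge-$2$ 't~Hooft bundle into a line plus a nodal conic (for $l=1$) or a connected chain (for $l=2$), read off the embedded point(s) of the limit curve to identify $Q_{E_0}$ and $c_3(E_0^{\vee\vee})$, and match the resulting double-dual triple with the descriptions (\ref{F,M1}) and (\ref{F,M2}) of sheaves in $\calr(0;2;2l)$. The only substantive difference is in handling the delicate point you flag: the paper constructs the flat family of sheaves explicitly from the ideal sheaf of the flat family of curves, as in (\ref{bold calE 1}), and establishes stability of the central fibre by showing $G_0^{\vee\vee}$ is $\mu$-stable because its section vanishes on a curve not contained in a plane, rather than appealing to properness of $\calm(2)$ and to ``openness'' of stability --- the latter implication, as you state it, points in the wrong direction, since openness only shows that stability of $E_0$ would propagate to nearby $E_t$, not conversely.
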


\begin{proof}
We first consider the case $l=1$. Recall from \cite{H-vb} that a generic locally free sheaf $G$ from $\cali(2)$ is a \emph{'t Hooft bundle}, fitting in an exact triple
\begin{equation}\label{E,I2}
0\to \op3(-1) \to G \to I_{Z/\p3}(1)\to 0,\ \ \ Z=\ell_0\sqcup \ell_1\sqcup \ell_2,
\end{equation}
where $\ell_0,\ell_1,\ell_2$ are disjoint lines in $\p3$. We include $Z$ as a generic fibre $Z_t,\ t\ne0,$ into a 1-dimensional flat family $\calz$ of curves in $\p3$:
\begin{equation}\label{calZ}
\pi:\calz\hookrightarrow\p3\times U\overset{pr_2}{\to}U,
\end{equation}
with base $U\ni0$ which is an open subset of $\mathbb{A}^1$, such that
\begin{itemize}
\item[(a)] for $t\ne0$ the fibre $Z_t=\pi^{-1}(t)$ of the family $\calz$ is a disjoint union of three lines in $\p3$;
\item[(b1)] the zeroth fibre $Z_0$ of this family, being reduced, is a union of lines
\begin{equation}\label{Z0red}
(Z_0)_{red}=\ell_0\sqcup(\ell_1\cup \ell_2),\ \ \ w:=\ell_1\cap \ell_2=\{\mathrm{pt}\}.
\end{equation}
and as a scheme $Z_0$ has an embedded point $w$:
\begin{equation}\label{embedded pt}
0\to\calo_w\to\calo_{Z_0}\to\calo_{(Z_0)_{red}}\to0.
\end{equation}
\end{itemize}
The sheaf $G$ from (\ref{E,I2}) can then be included into the family $\boldsymbol{G}$ of sheaves on $\p3$ with base $U$ fitting in the exact triple
\begin{equation}\label{bold calE 1}
0 \to \op3(-1)\boxtimes\calo_U \to \boldsymbol{G} \to 
I_{\calz/\p3\times U}\otimes\calo_{\p3}(1) \boxtimes \calo_U\to 0.
\end{equation} 
We thus obtain a modular morphism
\begin{equation}\label{Phi U}
\Phi_U:\ U\to \calm(2) ,\ t\mapsto[G_t],\ \ \ 
G_t:=\boldsymbol{G}|_{\p3\times\{t\}}.
\end{equation}

Moreover, in view of (\ref{bold calE 1}), the sheaf $G_0$ fits into the exact triple 
$$ 0 \to \op3(-1) \stackrel{r}{\to} G_0\to I_{Z_0/\p3}(1)\to 0; $$
composing the morphism $r$ in the previous equation with the standard monomorphism $G_0 \to G_0^{\vee\vee}$ we obtain, using the triple (\ref{embedded pt}), the following exact triples for $G_0^{\vee\vee}$:
\begin{equation}\label{E refl l=1}
0\to G_0\to G_0^{\vee\vee}\to\calo_q\to 0,
\end{equation}
\begin{equation}\label{E refl triple}
0\to \op3(-1) \overset{s}{\to} G_0^{\vee\vee}\to I_{(Z_0)_{red}/\p3}(1)\to 0.
\end{equation}
Now (\ref{Z0red}) and (\ref{E refl triple}) show that $s$ is a section of a reflexive sheaf $G_0^{\vee\vee}$ having a disjoint union $(Z_0)_{red}$ of a line and a reducible conic as the zero scheme. Note that $G_0^{\vee\vee}$ is $\mu$-stable, since $Y$ is not contained in a plane (cf. \cite[Proposition 4.2]{H-r}); it follows that $G_0$ is stable. Since, by construction, $\Phi(U\setminus\{0\})\subset\cali(2)$, we conclude that 
\begin{equation}\label{E0 limit inst}
[G_0]\in\overline{\cali(2)}.
\end{equation}

By the description of $\calr(0;2;2)$ above, it follows that the triple (\ref{E refl triple}) is a specialization of the triple (\ref{F,M1}) within a flat family of triples in which the conic $C$ specializes into a reducible conic $\ell_1\cup \ell_2$, so that $Y$ in (\ref{F,M1}) specializes to $(Z_0)_{red}$. It follows that
the triple (\ref{E refl l=1}) is a flat specialization of the triple (\ref{R(0,2,2)}), so that
\begin{equation}\label{E0 limit M1}
[G_0]\in\overline{\calt(2,1)}.
\end{equation}
Finally, (\ref{E0 limit inst}) and (\ref{E0 limit M1}) imply that $\overline{\calt(2,1)}\cap\overline{\cali(2)}\ne\emptyset$, as desired.

Next, consider the case $l=2$; one takes a family $\calz$ as in (\ref{calZ}) satisfying  property (a) above, and replacing property (b1) by the following one:
\begin{itemize}
\item[(b2)] the zeroth fibre $Z_0$ of this family, being reduced, is a (connected)
chain of three lines not lying in a plane:
\begin{equation}\label{Z0red 1}
(Z_0)_{red}=\ell_0\cup \ell_1\cup \ell_2 ,\ \ \ q_1:=\ell_0\cap \ell_1=\{\mathrm{pt}\},\
q_2:=\ell_1\cap \ell_2=\{\mathrm{pt}\},\  q_1\ne q_2.
\end{equation}
and as a scheme $Z_0$ has two embedded points $q_1$ and $q_2$:
$$ 0\to\calo_{q_1}\oplus\calo_{q_2}\to\calo_{Z_0}\to\calo_{(Z_0)_{red}}\to0. $$
\end{itemize}
Then, as above, the 't Hooft bundle $G$ from (\ref{E,I2}) is included into the family $\boldsymbol{G}$ of sheaves on $\p3$ given by the exact triple (\ref{bold calE 1}). In this case, instead of the triple (\ref{E refl l=1}), one has an exact triple
\begin{equation}\label{E refl 1}
0\to G_0\to G_0^{\vee\vee} \to \calo_{q_1}\oplus\calo_{q_2} \to 0,
\end{equation}
Besides, the triple (\ref{E refl triple}) holds as before; thus, in view of (\ref{Z0red 1}), the morphism $s$ in (\ref{E refl triple}) is a section of a reflexive sheaf $G_0^{\vee\vee}(1)$ having the chain of lines $(Z_0)_{red}$ in (\ref{Z0red 1}) as its zero scheme. Note that $G_0^{\vee\vee}$ is $\mu$-stable, since $Y$ is not contained in a plane (cf. \cite[Proposition 4.2]{H-r}); it also follows that $G_0$ is stable, so that $[G_0]\in\overline{\cali(2)}$, where, as before, $G_0=:\boldsymbol{G}|_{\p3\times\{0\}}$.

Hence, from the above description of $\calr(0;2;4)$, it follows that the triple (\ref{E refl triple}) is a specialization of the triple (\ref{F,M2}) within a flat family of triples in which the twisted cubic $Y$ specializes to the chain of lines $(Z_0)_{red}$ in (\ref{Z0red 1}). It follows that the triple
(\ref{E refl 1}) is a flat specialization of the triple (\ref{R(0,2,4)}), so that
\begin{equation}\label{E0 limit M2}
[G_0]\in[G_0]\in\overline{\calt(2,2)}.
\end{equation}
Now $\overline{\calt(2,2)}\cap\overline{\cali(2)}\ne\emptyset$  follows from (\ref{E0 limit inst}) and
(\ref{E0 limit M2}).
\end{proof}

\begin{remark}\rm
It follows from \cite[Theorem 7.8]{JMT}, (\ref{full M(2)}) and Theorem \ref{T inter I 2} that the boundary of charge 2 instanton bundles
$$ \partial\cali(2) := \overline{\cali(2)}\setminus\cali(2) $$
has exactly four components, divided into two types:
\begin{itemize}
\item[(I)] $\overline{\calc(1,1,1)}$ and $\overline{\calc(1,2,0)}$, which corresponds to $\cald(1,2)$ and $\cald(2,2)$, respectively, in the notation of \cite[Theorem 7.8]{JMT}; and
\item[(II)] $\overline{\calt(2,l)}\cap\overline{\cali(2)}$ for $l=1,2$.
\end{itemize}
Indeed, either $[E]\in \partial\cali(2)$ is an instanton sheaf, so $[E]$ lies in one of the components of type (I), or $[E]$ is not an instanton sheaf, in which case $[E]$ lies in one of the components of type (II). In addition, by \cite[Proposition 6.4]{JMT}, the components of type (I) are irreducible and divisorial. 

The fact that $\partial\cali(2)$ has exactly 4 components was first observed by Narasimhan and Trautmann in \cite{NT}; they also showed that all four components are irreducible and divisorial. Therefore, it is quite reasonable to conjecture that the components of type (II) above are both irreducible and divisorial. 
\end{remark}

\section{Irreducible components of $\calm(3)$} \label{M(3)}

Ellingsrud and Stromme showed in \cite{ES} that $\calb(3)$ has precisely two irreducible components, both nonsingular, rational and of the expected dimension 21; these can be described as follows:
\begin{itemize}
\item the \emph{instanton component} $\cali(3)$, whose points are the cohomology of monads of the form
$$ 0\to 3\cdot\op3(-1) \to 8\cdot \op3 \to 3\cdot\op3(1) \to0; 
$$
\item the \emph{Ein component} $\caln(0,1,2)$, following the notation of Section \ref{n cap c} above, whose points are the cohomology of monads of the form
$$ 0\to \op3(-2) \to \op3(-1) \oplus 2\cdot\op3 \oplus \op3(1) \to \op3(2) \to0. $$
\end{itemize}

Recall also that Chang \cite[Section 3]{Chang} proved that $\calr(0;3;l)$ is, for each $l=1,\dots,4$, irreducible and of expected dimension 21; in addition, $\calr(0;3;4)$ and $\calr(0;3;8)$ are rational, while $\calr(0;3;6)$ is unirational. Therefore, we can apply Theorem \ref{0d comp's} to show that there are four irreducible components $\overline{\calt(3,l)}$ of dimensions $21+4l$, for each $l=1,\dots,4$ within $\calm(3)$.

Furthermore, Theorem \ref{Thm 4.6} provides one additional irreducible component whose generic point corresponds to sheaves with 1-dimensional singularities, labeled $\overline{\mathcal{C}(1,3,0)}$ in Section \ref{1dsing}.

We therefore conclude that $\calm(3)$ has at least seven irreducible components, divided into 3 types, as below:
\begin{itemize}
\item[(I)] $\overline{\cali(3)}$ and $\overline{\caln(0,1,2)}$, both of dimension 21, and whose generic point corresponds to a locally free sheaf; 
\item[(II)] $\overline{\mathcal{C}(1,3,0)}$, of dimension 21; whose generic point corresponds to a sheaf which is singular along smooth plane cubic;
\item[(III)] $\overline{\calt(3,l)}$ for $l=1,2,3,4$, of dimension $21+4l$, whose generic point corresponds to a sheaf which is singular along $3l$ distinct points.
\end{itemize}

In this section, we prove the following.

\begin{theorem}\label{connected union}
The union
$$ \overline{\cali(3)}\cup\overline{\caln(0,1,2)}
\cup\overline{\mathcal{C}(1,3,0)}\cup\overline{\calt(3,1)}\cup
\overline{\calt(3,2)}\cup\overline{\calt(3,3)}\cup
\overline{\calt(3,4)} $$
is connected.
\end{theorem}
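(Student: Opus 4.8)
The plan is to prove connectedness by exhibiting enough nonempty pairwise intersections among the seven components that the graph with one vertex per component and one edge per nonempty intersection is connected; since each component is irreducible, hence connected, connectedness of this graph implies connectedness of the union. Two edges are immediate. Taking $(d_1,d_2,c)=(1,3,0)$ in Theorem~\ref{intersection 1d} gives
$$ \overline{\cali(3)}\cap\overline{\mathcal{C}(1,3,0)}\neq\emptyset, $$
and Proposition~\ref{N inter C} with $b=1$, $c=2$ (so that $c^2-b^2=3$ and $\overline{\mathcal{C}(c-b,c+b,0)}=\overline{\mathcal{C}(1,3,0)}$) gives
$$ \overline{\caln(0,1,2)}\cap\overline{\mathcal{C}(1,3,0)}\neq\emptyset. $$
These already tie together the three components of dimension $21$. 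It then remains to connect the four components $\overline{\calt(3,l)}$, $l=1,2,3,4$, to this cluster, which I will do by establishing $\overline{\calt(3,l)}\cap\overline{\cali(3)}\neq\emptyset$ for each $l$; the graph is then a star centred at $\overline{\cali(3)}$ together with the extra edge joining $\overline{\caln(0,1,2)}$ to $\overline{\mathcal{C}(1,3,0)}$, hence connected.

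For the four remaining intersections I would follow closely the degeneration argument of Theorem~\ref{T inter I 2}, now starting from a charge-$3$ 't Hooft bundle $G$ sitting in a triple $0\to\op3(-1)\to G\to I_{Z}(1)\to 0$ with $Z$ a disjoint union of four lines. For each fixed $l\in\{1,2,3,4\}$ I would include $Z$ as a generic fibre $Z_t$, $t\neq0$, of a flat family $\calz\hookrightarrow\p3\times U\to U$ over a curve $U\ni0$ whose central fibre $Z_0$ is supported on a \emph{non-coplanar} union $(Z_0)_{\mathrm{red}}$ of four lines meeting in exactly $l$ nodes and carrying $l$ embedded points, i.e. $0\to\calo_{\mathrm{emb}}\to\calo_{Z_0}\to\calo_{(Z_0)_{\mathrm{red}}}\to0$ with $\lt(\calo_{\mathrm{emb}})=l$; this length is forced, since flatness keeps $\chi(\calo_{Z_0})=\chi(\calo_{Z})=4$ while $\chi(\calo_{(Z_0)_{\mathrm{red}}})=4-l$ for four lines with $l$ nodes. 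Extending $G$ over $U$ exactly as in (\ref{bold calE 1}) yields a modular morphism $U\to\calm(3)$ carrying $U\setminus\{0\}$ into $\cali(3)$, so that $[G_0]\in\overline{\cali(3)}$.

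To place $[G_0]$ in $\overline{\calt(3,l)}$ I would argue as in Theorem~\ref{T inter I 2}. Composing the inclusion $\op3(-1)\hookrightarrow G_0$ with $G_0\hookrightarrow G_0^{\vee\vee}$ and using the embedded-point sequence yields the triples $0\to G_0\to G_0^{\vee\vee}\to Q\to0$ with $Q\cong\calo_{\mathrm{emb}}$ of length $l$, and $0\to\op3(-1)\to G_0^{\vee\vee}\to I_{(Z_0)_{\mathrm{red}}}(1)\to0$. Non-coplanarity of $(Z_0)_{\mathrm{red}}$ forces $G_0^{\vee\vee}$ to be $\mu$-stable by \cite[Proposition~4.2]{H-r}, hence $G_0$ is stable; since $Q$ is $0$-dimensional one has $c_2(G_0^{\vee\vee})=c_2(G_0)=3$ and $c_3(G_0^{\vee\vee})=2l$, so $[G_0^{\vee\vee}]\in\calr(0;3;2l)$ (here $l\le4$, with $l=4$ saturating the bound $c_3\le n^2-n+2=8$ of Corollary~\ref{bound l'}). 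By Chang~\cite[Section~3]{Chang}, $\calr(0;3;2l)$ is irreducible and nonsingular of expected dimension $21$, so the locus $\cals(3,l)$ where $\ext^2(F,F)=0$ is dense; a one-parameter family $F_s$ in $\calr(0;3;2l)$ with $F_0=G_0^{\vee\vee}$ and $F_s\in\cals(3,l)$ for $s\neq0$, equipped with compatible epimorphisms onto length-$l$ quotients $Q_s$ supported at $l$ distinct points off $\sing(F_s)$, then exhibits $0\to G_0\to G_0^{\vee\vee}\to Q\to0$ as a flat specialization of the defining triples of $\calt(3,l)$, so $[G_0]\in\overline{\calt(3,l)}$. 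This is consistent on dimension grounds: the generic zero-section curve $Y_l$ of a sheaf in $\calr(0;3;2l)$ has degree $4$ and arithmetic genus $p_a(Y_l)=l-3$ (from $\chi(F(1))=8-3c_2+\tfrac12 c_3=l-1$), which matches $p_a((Z_0)_{\mathrm{red}})=1-(4-l)=l-3$, so both triples live in one flat family of curves.

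The main obstacle is the explicit geometric input of the second and third paragraphs: exhibiting, for all four values $l=1,2,3,4$ at once, a genuinely flat degeneration of four \emph{disjoint} lines whose special fibre is supported on a non-coplanar union of four lines with precisely $l$ nodes and whose embedded structure has length exactly $l$, and checking that the resulting reflexive hull is general enough in $\calr(0;3;2l)$ for the flat-specialization step. The cases $l=1,2$ reproduce the two configurations of Theorem~\ref{T inter I 2} (a reducible conic together with disjoint lines, and a chain of lines), while $l=3$ (a chain of four lines, $p_a=0$) and the extremal $l=4$ (a skew quadrilateral of four lines, $p_a=1$, saturating Corollary~\ref{bound l'}) require one to verify non-coplanarity and the one-embedded-point-per-node count by hand; controlling the flat limit of the disjoint lines, rather than of an arbitrary family, so that exactly these embedded points appear is the delicate point.
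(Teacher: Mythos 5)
Your proposal is correct and follows essentially the same route as the paper: the two ``free'' edges come from Theorem \ref{intersection 1d} and Proposition \ref{N inter C}, and the remaining four intersections $\overline{\calt(3,l)}\cap\overline{\cali(3)}\ne\emptyset$ are established exactly as in the paper's Proposition \ref{T inter I}, by degenerating the four disjoint lines cut out by a section of a charge-$3$ 't~Hooft bundle to a non-coplanar configuration with $l$ nodes and $l$ embedded points, and matching the reduced limit curve with Chang's description of the zero schemes of sections of sheaves in $\calr(0;3;2l)$ (reducible conic plus two lines, line plus degenerate twisted cubic, chain of four lines, skew quadrilateral). Your Euler-characteristic and arithmetic-genus bookkeeping is a nice sanity check that the paper leaves implicit, and the ``delicate point'' you flag is precisely what the paper handles case by case.
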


\begin{remark}\rm
We conjecture that $\calm(3)$ has no other irreducible components, and hence it is connected. Our list certainly exhausts all irreducible components of $\calm(3)$ whose generic point corresponds either to a locally free sheaf, or to a sheaf with 0-dimensional singularities. However, it is not clear to us at the moment whether $\calm(3)$ possesses other irreducible components whose generic point corresponds to a sheaf with either 1-dimensional or mixed singularities.
\end{remark}

First, note that Theorem \ref{intersection 1d} guarantees, in particular, that 
$\overline{\cali(3)}\cap\overline{\mathcal{C}(1,3,0)}\ne\emptyset$,
something that has also been been remarked by Perrin in \cite[Thm 0.1]{Per3}. In addition, Proposition
\ref{N inter C} for $c=2$ and $b=1$ guarantees that $\overline{\mathcal{C}(1,3,0)}$ also intersects the closure of $\caln(0,1,2)$. Therefore, the proof of Theorem \ref{connected union} is completed by proving the following result.

\begin{proposition}\label{T inter I}
For each $l=1,2,3,4$, we have $\overline{\calt(3,l)}\cap\overline{\cali(3)}\ne\emptyset$.
\end{proposition}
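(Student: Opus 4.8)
The plan is to adapt the degeneration argument of Theorem \ref{T inter I 2} from charge $2$ to charge $3$. Recall that a generic instanton bundle of charge $3$ is a 't Hooft bundle $G$, fitting in an exact triple
$$ 0\to\op3(-1)\to G\to I_{Z/\p3}(1)\to 0, $$
where $Z$ is a disjoint union of four lines, so that $\deg Z=4$ and $c_2(G)=\deg Z-1=3$. The idea is that by degenerating the four disjoint lines into suitable incident configurations of lines, the flat limit of $G$ acquires $0$-dimensional singularities and lands simultaneously in $\overline{\cali(3)}$ and in $\overline{\calt(3,l)}$.

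Concretely, for each $l=1,2,3,4$ I would construct a flat family $\calz\hookrightarrow\p3\times U$ over a pointed affine curve $(U,0)$ whose general fibre $Z_t$ ($t\neq 0$) is a disjoint union of four lines and whose special fibre $Z_0$ has reduced support $(Z_0)_{red}$ a non-degenerate (not contained in a plane) union of four lines with exactly $l$ nodes: for $l=1$, $\ell_0\sqcup\ell_1\sqcup(\ell_2\cup\ell_3)$ with one node; for $l=2$, $(\ell_0\cup\ell_1)\sqcup(\ell_2\cup\ell_3)$ with two nodes; for $l=3$, a chain $\ell_0\cup\ell_1\cup\ell_2\cup\ell_3$ with three nodes; and for $l=4$, a spatial quadrilateral $\ell_0,\ell_1,\ell_2,\ell_3$ with $\ell_i\cap\ell_{i+1}\neq\emptyset$ cyclically, giving four nodes. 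Since the Hilbert polynomial is locally constant in a flat family, and a union of four lines with $l$ nodes has $\chi(\calo_{(Z_0)_{red}})=4-l$ while four disjoint lines have $\chi=4$, the fibre $Z_0$ must acquire one embedded point at each node; thus $Z_0$ carries exactly $l$ embedded points $w_1,\dots,w_l$.

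Next, including $G$ into the family via the same extension as in (\ref{bold calE 1}) produces a modular morphism $\Phi_U\colon U\to\calm(3)$ with $\Phi_U(U\setminus\{0\})\subset\cali(3)$. Composing the inclusion $\op3(-1)\to G_0$ with $G_0\to G_0^{\vee\vee}$ and using the embedded-point sequence $0\to\calo_{\{w_1,\dots,w_l\}}\to\calo_{Z_0}\to\calo_{(Z_0)_{red}}\to 0$, I obtain, as in (\ref{E refl l=1})--(\ref{E refl triple}), the triples
$$ 0\to G_0\to G_0^{\vee\vee}\to Q\to 0, \qquad 0\to\op3(-1)\to G_0^{\vee\vee}\to I_{(Z_0)_{red}/\p3}(1)\to 0, $$
where $Q$ is a $0$-dimensional sheaf of length $l$ supported at the nodes. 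Because $(Z_0)_{red}$ is not contained in a plane, $G_0^{\vee\vee}$ is $\mu$-stable by \cite[Proposition 4.2]{H-r}, hence $G_0$ is stable, and since $\Phi_U(U\setminus\{0\})\subset\cali(3)$ this gives $[G_0]\in\overline{\cali(3)}$. On the other hand, the first triple yields $c_2(G_0^{\vee\vee})=3$ and $c_3(G_0^{\vee\vee})=2\,\lt(Q)=2l$, so $[G_0^{\vee\vee}]\in\calr(0;3;2l)$, which is irreducible of dimension $21$ by Chang \cite[Section 3]{Chang}. That triple exhibits $G_0$ as the kernel of an epimorphism $G_0^{\vee\vee}\onto Q$ from a reflexive sheaf in $\calr(0;3;2l)$ onto a length-$l$ sheaf, i.e.\ as a specialization of the data defining $\calt(3,l)$ in Section \ref{0dsing}; deforming $G_0^{\vee\vee}$ inside the irreducible $\calr(0;3;2l)$ to a sheaf with $\ext^2(F,F)=0$ and moving $\supp(Q)$ off the singular locus realizes $G_0$ as a flat limit of generic points of $\calt(3,l)$, so $[G_0]\in\overline{\calt(3,l)}$. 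Therefore $\overline{\calt(3,l)}\cap\overline{\cali(3)}\neq\emptyset$.

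The main obstacle is the construction and flatness of the degenerations $\calz/U$, together with the verification that $Z_0$ acquires precisely one embedded point at each node (so that $c_3(G_0^{\vee\vee})=2l$); the delicate case is $l=4$, where one must realize the non-planar spatial quadrilateral as a genuine flat limit of four disjoint lines and confirm that all four nodes become embedded points. A secondary point to check is that $\supp(Q)$, which in the limit sits on $\sing(G_0^{\vee\vee})$, can indeed be deformed off the singular locus, so that the limiting triple lies in the closure of the generic stratum of $\calt(3,l)$ and not on a smaller stratum.
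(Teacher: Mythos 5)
Your degeneration strategy is the same as the paper's: specialize the four disjoint lines defining a charge-3 't~Hooft bundle into a nodal configuration with $l$ nodes, so that the flat limit $G_0$ lies in $\overline{\cali(3)}$, while $G_0^{\vee\vee}$ acquires $c_3=2l$ and $Q=G_0^{\vee\vee}/G_0$ has length $l$; your configurations for $l=1,3,4$ are exactly the ones used in the paper, and your Euler-characteristic count of the embedded points is the same bookkeeping. Where you diverge is the last step, the membership $[G_0]\in\overline{\calt(3,l)}$, and this is where your argument has a real hole. By definition $\calt(3,l)$ is built from quotients $F\onto\oplus_j\calo_{q_j}$ with $q_j\notin\sing(F)$, whereas your $Q$ is supported exactly at the singular points of $G_0^{\vee\vee}$, where the fibre of $G_0^{\vee\vee}$ jumps in dimension. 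Irreducibility of $\calr(0;3;2l)$ plus ``moving $\supp(Q)$ off the singular locus'' does not by itself show that the \emph{specific} length-one quotient of that bigger fibre arising from the embedded-point sequence is a limit of quotients supported at smooth points of nearby reflexive sheaves: the closure, in the relative Quot scheme, of the locus of such quotients need not sweep out the whole fibre over a singular point. You flag this yourself as a ``secondary point,'' but it is the crux. The paper closes it differently: it quotes Chang's explicit descriptions of the zero schemes $Y$ of sections of members of $\calr(0;3;2l)$ (two lines $\sqcup$ conic; line $\sqcup$ twisted cubic; rational quartic; elliptic quartic) and exhibits $(Z_0)_{red}$ as a flat specialization of such a $Y$, so that the whole triple $0\to E\to F\to\oplus_j\calo_{q_j}\to0$ visibly degenerates to $0\to G_0\to G_0^{\vee\vee}\to Q\to0$.

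A related point: your $l=2$ configuration (two disjoint reducible conics) is incompatible with that repair. By the principle of connectedness, a flat limit of a line disjoint from a (connected) twisted cubic has a connected degree-$3$ piece, so it cannot degenerate to two disjoint degree-$2$ components; the paper instead takes a line disjoint from a chain of three lines. Your configuration does land in $\calr(0;3;4)$ (it is non-planar, hence gives a stable reflexive sheaf, and $p_a=-1$ gives $c_3=4$), so it is not wrong per se, but if you want to close the gap above by the paper's curve-specialization method you must switch to the paper's configuration.
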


\begin{proof}
We begin by recalling the description of $\overline{\calt(3,l)},\ l=1,2,3,4,$ from Section \ref{0dsing}. A generic sheaf $E$ from $\overline{\calt(3,l)}$ is defined as the kernel of an epimorphism
$\epsilon:\ F\twoheadrightarrow \oplus_{i=1}^l\calo_{q_j}$, where $[F]\in\calr(0;3;2l)$
$\calr(0;3;2l)$ 
and $q_j\not\in\sing(F),\ j=1,...,l$:
\begin{equation}\label{R(0,3,l)}
0\to E\to F \overset{\epsilon}{\to} \oplus_{j=1}^l \calo_{q_j}\to 0,
\ \ \ q_j\not\in\sing(F),\ j=1,...,l.
\end{equation}
According to Chang \cite[Section 3]{Chang}, for each $l=1,...,l$ there exist sheaves $F$ in $\calr(0;3;l)$ such that $h^0(F(1))>0$, and whose zero scheme $Y=(s)_0$ of a nontrivial section $s\in H^0(F(1))$ can be described as follows:
\begin{itemize}
\item[(i)] For $l=1$, the scheme $Y$ is a disjoint union $\ell_1\sqcup \ell_2\sqcup C$ of two lines $\ell_1,\ell_2$ and a nonsingular conic $C$;
\item[(ii)] for $l=2$, the scheme $Y$ is a disjoint union $\ell\sqcup C$ of a line $\ell$ and a nonsingular twisted cubic $C$, cf. \cite[proof of Thm. 3.4]{Chang};
\item[(iii)] for $l=3$, the scheme $Y$ is a nonsingular rational quartic curve, cf. \cite[proof of Thm. 3.5]{Chang};
\item[(iv)] for $l=4$, the scheme $Y$ is a nonsingular space elliptic quartic curve, cf. \cite[proof of Lemma 3.8]{Chang}.
\end{itemize}
In addition, such sheaves are generic for $l=2,3,4$, but special in the case $l=1$.

Let us first consider the case $l=1$. We repeat with minor modifications the argument from Section \ref{M(2)}; more precisely, compare with equations (\ref{R(0,2,2)}), (\ref{F,M2})-(\ref{E0 limit M1}). First,
similar to (\ref{F,M1}) the sheaf $F=E^{\vee\vee}$ fits in an exact triple
\begin{equation}\label{F,T1}
0\to\op3(-1)\to E^{\vee\vee}\to I_{Y/\p3}(1)\to 0,\ \ \ Y=\ell_1\sqcup \ell_2\sqcup C. 
\end{equation}
Next, according to \cite{H-vb} there exists a 't Hooft vector bundle $H$ in $\cali(3)$ given by an exact triple
\begin{equation}\label{E,I3}
0\to\op3(-1)\to H \to I_{Z/\p3}(1)\to 0,\ \ \ Z= \ell_1\sqcup \ell_2\sqcup \ell_3\sqcup \ell_4,
\end{equation}
where $\ell_1,...,\ell_4$ are disjoint lines in $\p3$ of which $\ell_1$ and $\ell_2$ are taken from (i) above. We now include $Z$ as a generic fibre $Z_t,\ t\ne0,$ into a 1-dimensional flat family $\calz$ of curves in $\p3$ as in (\ref{calZ}),
with base $U\ni0$ being an open subset of $\mathbb{A}^1$, such that
\begin{itemize}
\item [(a)] for $t\ne0$ the fibre $Z_t=\pi^{-1}(t)$ of the family $\calz$ is a disjoint union of four lines in $\p3$;
\item [(b)] the zeroth fibre $Z_0$ of this family, being reduced, is a union of lines
\begin{equation}\label{Z0red l=1}
(Z_0)_{red} = \ell_1\sqcup \ell_2\sqcup(\ell_3\cup \ell_4),\ \ \ q=\ell_3\cap \ell_4=\{\mathrm{pt}\}.
\end{equation}
and as a scheme $Z_0$ has an embedded point $q_1$:
\begin{equation}\label{again embedded pt}
0\to \calo_{q} \to \calo_{Z_0} \to \calo_{(Z_0)_{red}} \to0.
\end{equation}
\end{itemize}

The sheaf $H$ defined in (\ref{E,I3}) is then included into the family $\boldsymbol{H}$ of sheaves on $\p3$ with base $U$ given by the exact triple
\begin{equation}\label{bold calE}
0\to \op3(-1)\boxtimes\calo_U \to \boldsymbol{H} \to
I_{\calz/\p3\times U}\otimes\calo_{\p3}(1)\boxtimes\calo_U \to 0.
\end{equation} 
Thus as in (\ref{Phi U}) we obtain a modular morphism $\Phi_U:\ U\to\overline{\cali(3)},\ t\mapsto[H_t]$, where
$H_t=\boldsymbol{H}|_{\p3\times\{t\}}$. In particular,
\begin{equation}\label{again E0 limit inst}
[H_0]\in\overline{\cali(3)}.
\end{equation}
In view of (\ref{bold calE}) the sheaf $H_0$ fits into the exact triple
$$ 0 \to \op3(-1) \stackrel{r}{\to} H_0 \to I_{Z_0,\p3}(1)\to 0; $$
composing the morphism $r$ in the previous equation with the standard monomorphism $H_0\to H_0^{\vee\vee}$
we obtain, using the triple (\ref{again embedded pt}), the following exact triples for $H_0^{\vee\vee}$:
\begin{equation}\label{E refl}
0 \to H_0 \to H_0^{\vee\vee} \to \calo_{q_1}\to 0, ~~{\rm and}
\end{equation}
\begin{equation}\label{E refl triple l=1}
0 \to \op3(-1) \overset{s}{\to} H_0^{\vee\vee} \to I_{(Z_0)_{red}/\p3}(1)\to 0.
\end{equation}
Now (\ref{Z0red l=1}) and (\ref{E refl triple l=1}) show that $s$ is a section of a reflexive sheaf $F_0^{\vee\vee}(1)$ having a disjoint union $(Z_0)_{red}$ of two lines and a reducible conic as its zero scheme. Hence, from the description of $\calr(0;3;2)$ given in item (i) above, it follows that the triple  (\ref{E refl triple l=1}) is a specialization of the triple (\ref{F,T1}) within a flat family of triples in which the nonsingular conic $C$ specializes into a reducible conic $\ell_3\cup \ell_4$, so that $Y$ specializes to $(Z_0)_{red}$. It follows that the triple (\ref{E refl}) is a flat specialization of the triple (\ref{R(0,3,l)}), so that
\begin{equation}\label{E0 limit T1}
[H_0]\in\overline{\calt(3,1)}.
\end{equation}
Finally, the case $l=1$ follows from (\ref{again E0 limit inst}) and (\ref{E0 limit T1}).

For $l=2,3,4$ the above argument goes through with the following modifications. 

For $l=2$, instead of (\ref{Z0red l=1}) one takes $(Z_0)_{red}=\ell_1\sqcup (\ell_2\cup\ell_3\cup \ell_4)$ with points $q_1=\ell_2\cap \ell_3$ and $q_2=\ell_3\cap \ell_4$, and consider it as a flat degeneration of a disjoint union of a line $\ell$ plus a smooth twisted cubic $C$ item (ii) above.

For $l=3$, one takes $(Z_0)_{red}=\ell_1\cup\ell_1\cup \ell_2\cup \ell_3)$ to be a chain of lines with three distinct points $q_1=\ell_1\cap \ell_2$, $q_2=\ell_2\cap \ell_3$, and $q_2=\ell_3\cap \ell_4$, considered as a flat degeneration of a nonsingular rational quartic curve $C$ from item (iii) above.

For $l=4$, one takes $(Z_0)_{red}=\ell_1\cup \ell_2\cup \ell_3\cup \ell_4$, to be a space union of lines with distinct intersection points $q_1=\ell_1\cap \ell_2$, $q_2=\ell_2\cap \ell_3$, $q_3=\ell_3\cap \ell_4$, $q_4=\ell_4\cap \ell_1$, considered as a flat degeneration of the nonsingular space elliptic quartic from item (iv) above.
\end{proof}

\begin{remark}\rm
Gruson and Trautmann conjectured that the boundary of charge 3 instanton bundles 
$$ \partial\cali(3) := \overline{\cali(3)}\setminus \cali(3) $$
has exactly 8 divisorial irreducible components, which can be divided into 2 types, cf. \cite[Remarque 3.6.8]{Per3}:
\begin{itemize}
\item[(I)] 4 components whose generic point corresponds to an instanton sheaf which is singular along a line, a smooth conic, a smooth twisted cubic or a smooth plane cubic; 
\item[(II)] 4 components whose generic point corresponds to a (non-instanton) sheaf which is singular along 2, 4, 6 or 8 points. 
\end{itemize}

The components of type (I) are, in the notation of \cite{JMT}, $\cald(m,3)$ for $m=1,2,3$ and $\overline{\calc(1,3,0)}\cap\overline{\cali(3)}$; all of these are known to be irreducible and divisorial, see \cite{GS}, \cite{JMT}, \cite[Th\'eor\`eme 0.1]{Per3}, and  \cite{Per4}. 

The components of type (II) are, in the notation of this paper, $\overline{\calt(3,l)}\cap\overline{\cali(3)}$ for $l=1,2,3,4$. Perrin showed in \cite{Per3} that $\overline{\calt(3,1)}\cap\overline{\cali(3)}$ is an irreducible divisor within ${\cali(3)}$.

Therefore, completing the proof of the Gruson--Trautmann conjecture amounts to showing that $\overline{\calt(3,l)}\cap\overline{\cali(3)}$ are irreducible and divisorial within ${\cali(3)}$ for $l=2,3,4$, and that $\partial\cali(3)$ has no other divisorial components.
\end{remark}

\begin{remark}\rm
It is interesting to note that the argument in Proposition \ref{T inter I} can be adapted to show that $\overline{\calt(3,1)}\cap\overline{\caln(0,1,2)}\ne\emptyset$. 

Indeed, consider a 1-dimensional flat family $\calz$ of curves in $\p3$ as in (\ref{calZ}), with base $U\ni0$ being an open subset of $\mathbb{A}^1$, such that
\begin{itemize}
\item [(a)] for $t\ne0$ the fibre $Z_t=\pi^{-1}(t)$ of the family $\calz$ is a disjoint union of a nonsingular elliptic cubic $C_t$ with a nonsingular space elliptic quartic $Q_t$;
\item [(b)] the zeroth fibre $Z_0$ of this family, being reduced, is the union a nonsingular elliptic cubic $C_0$ with a nonsingular space elliptic quartic $Q_0$ meeting a point $q$ 
$$ (Z_0)_{red} = C_0\cup Q_0,\ \ \ q := C_0\cap Q_0 = \{\mathrm{pt}\} $$
and as a scheme $Z_0$ has $q$ as an embedded point:
\begin{equation}\label{emb pt}
0\to \calo_{q} \to \calo_{Z_0} \to \calo_{(Z_0)_{red}} \to0.
\end{equation}
\end{itemize}
Next, consider the family $\boldsymbol{H}$ of sheaves on $\p3$ with base $U$ given by the exact triple
\begin{equation}\label{family U}
0\to \op3(-2)\boxtimes\calo_U \to \boldsymbol{H} \to
I_{\calz/\p3\times U}\otimes\calo_{\p3}(2)\boxtimes\calo_U \to 0.
\end{equation}
As before, set $H_t:=\boldsymbol{H}|_{\p3\times\{t\}}$. As observed by Hartshorne in \cite[Example 3.1.3]{H-vb}, the sheaves $H_t$ for $t\ne0$ are stable locally free sheaves with $c_1(H_t)=0$, $c_2(H_t)=3$ and $\alpha$ invariant equal to $1$; since instanton bundles have $\alpha=0$, it follows that $[H_t]\in\caln(0,1,2)$ when $t\ne0$. 

On the other hand, the reflexive sheaf $F$ defined by the exact triple
$$ 0 \to \op3(-2) \to F \to I_{(Z_0)_{red}/\p3}(2) \to 0 $$
yields a point in $\calr(0;3;2)$, since $(Z_0)_{red}$ is not contained in a hypersurface of degree 2, so that $F$ is $\mu$-stable; note, however, that the sheaves obtained in this way are not generic in $\calr(0;3;2)$, cf. \cite[proof of Lemma 3.6]{Chang}. Since $H_0$ fits into the exact triple
$$ 0 \to \op3(-2) \to H_0 \to I_{Z_0/\p3}(2) \to 0, $$
it then follows from (\ref{emb pt}) that  $H_0$ and $F$ are related via the exact triple
\begin{equation}\label{H0-F}
0 \to H_0 \to F \to \calo_{q} \to 0.
\end{equation}
The stability of $F$ implies that $H_0$ is also stable. Now, (\ref{family U}) imples that $[H_0]\in\overline{\caln(0,1,2)}$; on the other hand, (\ref{H0-F}) imples that $[H_0]\in\overline{\calt(3,1)}$, proving our claim.

We conjecture that $\overline{\calt(3,l)}\cap\overline{\caln(0,1,2)}\ne\emptyset$ also for $l=2,3,4$.
\end{remark}

\end{document}